\documentclass[10pt]{article}
\usepackage{lmodern}
\usepackage{amsmath}
\usepackage[T1]{fontenc}
\usepackage[utf8]{inputenc}
\usepackage{authblk}
\usepackage{amsfonts}
\usepackage{graphicx}
\usepackage{rotating}
\usepackage{amssymb}
\usepackage[english]{babel}
\usepackage{xcolor}
\usepackage{amsthm}
\usepackage{graphicx}
\usepackage{mathrsfs}
\usepackage{makecell}
\usepackage{microtype}
\usepackage{mathscinet}
\usepackage{array}
\usepackage{multirow}
\usepackage{enumerate}
\usepackage[cal=boondoxo,bb=ams]{mathalfa}
\usepackage{hyperref}
\hypersetup{hidelinks}
\usepackage{booktabs}

\newtheorem{theorem}{Theorem}[section]
\newtheorem{prop}{Proposition}[section]
\newtheorem{lemma}{Lemma}[section]
\newtheorem{coro}{Corollary}[section]
\newtheorem{remark}{Remark}[section]

\newcommand{\ml}{\mathcal}
\newcommand{\mb}{\mathbb}
\DeclareMathOperator{\intt}{int}
\DeclareMathOperator{\extt}{ext}
\DeclareMathOperator{\bdd}{bdd}
\DeclareMathOperator{\lin}{lin}
\DeclareMathOperator{\non}{nlin}

\def\XXint#1#2#3{{\setbox0=\hbox{$#1{#2#3}{\int}$ }
		\vcenter{\hbox{$#2#3$ }}\kern-.6\wd0}}

\title{Large time asymptotic behavior for the weakly damped Jordan-Moore-Gibson-Thompson equation}
\author[1]{Wenhui Chen\thanks{Wenhui Chen (wenhui.chen.math@gmail.com)}}
\author[2]{Yan Liu\thanks{Yan Liu (ly801221@163.com)}}
\author[1]{Manqing Luo\thanks{Manqing Luo (1985388097@qq.com)}}

\affil[1]{School of Mathematics and Information Science, Guangzhou University,\authorcr Guangzhou, P.R. China}
\affil[2]{Department of Applied Mathematics, Guangdong University of Finance,\authorcr 510521 Guangzhou, China}

\date{}

\begin{document}
		\maketitle

		\begin{abstract}
			\medskip
This manuscript considers the Jordan-Moore-Gibson-Thompson (JMGT) equation and its linearized equation with an additional weak damping term (proposed by [B. Kaltenbacher, \emph{Inverse Problems} (2025)] firstly) in the whole space $\mathbb{R}^n$. We mainly study the unique existence and large time behavior, including optimal decay estimates and asymptotic profiles, of global in-time Sobolev solutions for any $n\geqslant 1$. This weak damping term leads to diffusion profiles in the sub-critical case $\delta>0$ and  regularity-loss decay properties in the critical case $\delta=0$, which are greatly different from the results for the corresponding classical models without the weak damping term.
			\\
			
\noindent\textbf{Keywords:} Jordan-Moore-Gibson-Thompson equation, asymptotic profile, global in-time solution, regularity-loss decay property, weak damping term\\
			
\noindent\textbf{AMS Classification (2020)} 35B40, 35L75, 35L30, 35A01 
		\end{abstract}
\fontsize{12}{15}
\selectfont

\section{Introduction}\setcounter{equation}{0}\label{Section-Introduction}
\hspace{5mm}In the present paper, we mainly consider the following Cauchy problem for the Kuznetsov-type Jordan-Moore-Gibson-Thompson (JMGT) equation with a weak damping term (proposed by \cite[Equation (1)]{Kaltenbacher-2025} originally) for any $n\geqslant 1$:
\begin{align}\label{Eq-Dissipation-JMGT}
\begin{cases}
\tau\psi_{ttt}+\psi_{tt}-\Delta\psi-(\delta+\tau)\Delta\psi_t+\gamma\psi_t=\partial_t\ml{N}(\psi_t,\nabla\psi),&x\in\mb{R}^n,\ t>0,\\
\psi(0,x)=\psi_0(x),\ \psi_t(0,x)=\psi_1(x),\ \psi_{tt}(0,x)=\psi_2(x),&x\in\mb{R}^n,
\end{cases}
\end{align}
with the thermal relaxation $\tau>0$ from the Cattaneo law of heat conduction, the diffusivity of sound $\delta\geqslant0$, and the weak attenuation coefficient $\gamma>0$, where the unknown function $\psi=\psi(t,x)\in\mb{R}$ is referred to the acoustic velocity potential. The derivative-type nonlinearity is expressed by
\begin{align}\label{Nonlinearity-Intro}
\ml{N}(\psi_t,\nabla\psi):=\frac{B}{2A}|\psi_t|^2+|\nabla\psi|^2
\end{align}
with the constant radio $\frac{B}{2A}>0$ standing for the nonlinearities of state in a given fluid. Our main purpose is to investigate some new influence from the additional weak damping term $+\gamma \psi_t$ on the large time qualitative properties of global in-time solutions to the Cauchy problems for the JMGT equation \eqref{Eq-Dissipation-JMGT} in the sub-critical case $\delta>0$, and its linearized equation \eqref{Eq-Dissipation-MGT} in the sub-critical case $\delta>0$ and the critical case $\delta=0$, separately. 

\subsection{Backgrounds for the classical JMGT equation}\hspace{5mm}
Let us first recall the physical and historical backgrounds for the classical JMGT equation, i.e. the vanishing weak damping limit of \eqref{Eq-Dissipation-JMGT} with $\gamma=0$, as follows:
\begin{align}\label{Eq-Real-JGMT-W}
	\tau\psi_{ttt}+\psi_{tt}-\Delta\psi-(\delta+\tau)\Delta\psi_t=\partial_t\ml{N}(\psi_t,\nabla\psi)
\end{align}
equipping the nonlinearity \eqref{Nonlinearity-Intro}, $\tau>0$, and $\delta\geqslant0$.
To describe the propagation of sound in viscous thermally relaxing fluids, an approximated model of fully compressible Navier-Stokes-Cattaneo (NSC) system in irrotational flows is always considered in nonlinear acoustics. Precisely, by using the Lighthill scheme of approximations (cf. \cite{Lighthill=1956}) for the fully compressible NSC system to retain the terms of first- and second-orders with respect to small perturbations around the constant equilibrium state, the classical JMGT equation \eqref{Eq-Real-JGMT-W} arises. The classical JMGT equation is named after the early works of F.K. Moore, W.E. Gibson \cite{MooreGibson1960} in 1960, of P.A. Thompson \cite{Thompson1972} in 1972, and  of P.M. Jordan \cite{Jordan-2014} in 2014 who employed the Cattaneo law to eliminate an infinite signal speed paradox from the Fourier law of heat conduction in thermoviscous fluids.  These mathematical models have been  used extensively in medical and industrial applications of high-intensity focused
ultrasound, for instance, the medical imaging and therapy, ultrasound cleaning and welding (cf. \cite{Abramov-1999,Dreyer-Krauss-Bauer-Ried-2000,Kaltenbacher-Landes-Hoffelner-Simkovics-2002}).

We next are going to address a brief review on the classical JMGT equation \eqref{Eq-Real-JGMT-W} in the whole space $\mb{R}^n$ (for the bounded domains case, we refer the interested reader to \cite{Kaltenbacher-Lasiecka-Marchand-2011,Marchand-McDevitt-Triggiani-2012,Kaltenbacher-Lasiecka-Pos-2012,Conejero-Lizama-Rodenas-2015,Dell-Pata=2017,Kaltenbacher-Nikolic-2019,B-L-2020,Kaltenbacher-Niko-2021,Niko-Winker=2024} and references given therein).  As preparations, some qualitative properties of solutions for the corresponding linearized Cauchy problem, i.e. the classical Moore-Gibson-Thompson (MGT) equation with the vanishing right-hand side (see \eqref{Eq-Dissipation-MGT} with $\gamma=0$), are well-established in recent years, including sharp energy decay rates in \cite{Pellicer-Said-Houari=2019}, optimal growth/decay $L^2$ estimates in  \cite{Chen-Ikehata=2021,Chen-Takeda=2023}, $L^p-L^q$ estimates with $1\leqslant p\leqslant q\leqslant +\infty$ in \cite{Chen-Gong=2024,Chen-Ma-Qin=2025}, singular limits even with a singular layer in \cite{Chen-Ikehata=2021,Chen-Gong=2024}. Let us turn to the Cauchy problem for \eqref{Eq-Real-JGMT-W}. There are some well-established results for the sub-critical case $\delta>0$. The global in-time well-posedness results of small data Sobolev solutions and Besov solutions, respectively, were proved by \cite{Racke-Said-2020} and \cite{Said-Houari=Besov=2022} in $\mb{R}^3$ via energy methods. Then, the author of \cite{Said-Houari=Large-Norm=2022} removed the smallness assumption for higher-order Sobolev data in $\mb{R}^3$ and deduced energy decay estimates with the additional $L^1$ integrability of initial data. Carrying Sobolev regular small data with the additional $L^1$ integrability, the authors of \cite{Chen-Takeda=2023} derived large time optimal growth/decay estimates and optimal leading terms of global in-time Sobolev solutions in $\mb{R}^n$ with any $n\geqslant 1$ via the WKB analysis and the Fourier analysis, where the diffusion-wave profile leads to the polynomial growth if $n=1$ and the logarithmic growth if $n=2$ for the solution itself. By using the frequency-uniform decomposition techniques, the recent work \cite{Chen=2025} obtained the global in-time existence result without requiring the smallness of rough initial data (but their Fourier support restrictions belong to a suitable subset of first octant) for the classical JMGT equation of Westervelt-type in the complex-valued framework. Note that the global in-time existence result equipping the additional $L^m$ integrable initial data was demonstrated as $m\in[1,2)$.
 Focusing on the critical case $\delta=0$, the blow-up result associated with upper bound estimates for the lifespan in lower dimensions $n\leqslant 3$ was demonstrated by \cite{Chen-Liu-Palmieri-Qin=2023}, whose proof is motivated by the earlier manuscripts  \cite{Chen-Palmieri=2020} for the power-type nonlinearity and \cite{Chen-Palmieri=2021} for the derivative-type nonlinearity in the semilinear MGT equations. Regrettably, the existence of global in-time solution for higher-dimensions $n\geqslant 4$ in the critical case $\delta=0$ remains an open question.

\subsection{Main purposes of this manuscript}
\hspace{5mm}To the best of our knowledge, the weakly damped JMGT equation \eqref{Eq-Dissipation-JMGT} and its linearization \eqref{Eq-Dissipation-MGT} have barely been studied so far (even in bounded domains) with the exception of \cite{Kaltenbacher-2025} from the viewpoint of inverse problems. The purposes of present manuscript are twofold, being to understand how the weak damping term affects the qualitative properties of solution.

Our first contribution is the large time asymptotic properties for the linearized weakly damped MGT equation \eqref{Eq-Dissipation-MGT}. The optimal large time estimates associated with asymptotic profiles are derived for any $\delta\geqslant 0$, where we required the suitably higher $\ell$-order Sobolev regularity for all initial data if $\delta=0$ due to the regularity-loss phenomenon (this regularity-loss decay property is well-understood in many physical models, for example, the Euler-Maxwell system \cite{Hosono-Kawashima=2006}, the dissipative Timoshenko system \cite{Ide-Haramoto-Kawashima=2008}, and the Vlasov-Maxwell system \cite{Duan=2011}). This optimal decay rate is caused by the weak damping term $+\gamma\varphi_t$, and the regularity-loss decay property in the critical case $\delta=0$ is caused by the lack of viscous damping term $-\delta\Delta\varphi_t$. Some comparisons for showing the importance of parameters $\tau$, $\delta$, $\gamma$ via the vanishing cases will be proposed after demonstrating the main results (Theorem \ref{Theorem-Linear-Estimates} and Theorem \ref{Theorem-Linear-Estimates-2}).

Our second contribution is to justify the large time asymptotic profile of global in-time small data Sobolev solution to the nonlinear weakly damped JMGT equation \eqref{Eq-Dissipation-JMGT} in the sub-critical case $\delta>0$ as follows:
\begin{align}\label{Profile-Nonlinear}
\widetilde{\psi}(t,x):=c_{n,\gamma} \,M_{\tau,\gamma}\,t^{-\frac{n}{2}}\,\mathrm{e}^{-\frac{\gamma|x|^2}{4t}}
\end{align}
with the constants $c_{n,\gamma}:=\gamma^{-1}2^{-\frac{n}{2}}$ and
\begin{align}\label{M-Mom}
	M_{\tau,\gamma}:=\int_{\mb{R}^n}\left(\gamma\psi_0(x)+\psi_1(x)+\tau\psi_2(x)-\frac{\tau B}{2A}|\psi_1(x)|^2-\tau|\nabla\psi_0(x)|^2\right)\mathrm{d}x.
\end{align}
Let $j\in\{0,1,2\}$. For the Sobolev initial data with the additional $L^1$ integrability, we not only prove the large time optimal decay estimate (guaranteed by the identical time-dependent coefficient in its upper and lower bounds)
\begin{align*}
\|\partial_t^j\psi(t,\cdot)\|_{\dot{H}^{\sigma+2-j}}\approx t^{-\frac{n+2\sigma+4+2j}{4}}\ \ \mbox{if}\ \ M_{\tau,\gamma}\neq0,
\end{align*}
but also deduce the time-weighted convergence
\begin{align*}
\lim\limits_{t\to+\infty}t^{\frac{n+2\sigma+4+2j}{4}}\|\partial_t^j\psi(t,\cdot)-(-\Delta)^{j}\widetilde{\psi}(t,\cdot)\|_{\dot{H}^{\sigma+2-j}}=0,
\end{align*}
with $s>\max\{\frac{n}{2}-1,0\}$ for $\sigma\in\{j-2,s\}$ and any $n\geqslant 1$. These conclusions arising from Theorem \ref{Thm-GESDS} show the crucial effect of weak damping term $+\gamma\psi_t$ comparing with the limit case $\gamma=0$, i.e. the Cauchy problem for the classical JMGT equation \eqref{Eq-Real-JGMT-W}, in the references \cite{Racke-Said-2020,Said-Houari=Besov=2022,Said-Houari=Large-Norm=2022,Chen-Takeda=2023}. One of the major difficulties for studying \eqref{Eq-Dissipation-JMGT} is the treatment of derivative-type nonlinearity $\partial_t\ml{N}(\psi_t,\nabla\psi)$ when we estimate $\partial_t^2\psi$ in $n=1$, particularly, the slower decay rate of local term $|\nabla\psi|^2$ in comparison with the one of $|\psi_t|^2$. It can be predicted by $|\,\xi\,\mathrm{e}^{-\frac{1}{\gamma}|\xi|^2t}|^2\approx t^{-1}|\partial_t\mathrm{e}^{-\frac{1}{\gamma}|\xi|^2t}|^2$ as $|\xi|\ll 1$. To overcome this obstacle, we will introduce different representations of mild solution $\partial_t^j\psi$ between two cases $j\in\{0,1\}$ and $j=2$. One may see \eqref{Rep-non} and \eqref{Rep-non-2} later.

\subsection{Notations}
\hspace{5mm}The positive constants $c$ and $C$ may be changed from line to line but are independent of the time variable. We write $f\lesssim g$ if there exists a positive constant $C$ such that $f\leqslant Cg$, analogously for $f\gtrsim g$. The sharp relation $f\approx g$ holds if and only if $g\lesssim f\lesssim g$. We denote by $f\ast_{(x)}g$ the convolution of $f$ and $g$ with respect to $x$.

 We take the following zones:
\begin{align*}
	\ml{Z}_{\intt}(\varepsilon_0):=\{|\xi|\leqslant\varepsilon_0\ll1\}, \ \ 
	\ml{Z}_{\bdd}(\varepsilon_0,N_0):=\{\varepsilon_0\leqslant |\xi|\leqslant N_0\},\ \ 
	\ml{Z}_{\extt}(N_0):=\{|\xi|\geqslant N_0\gg1\}.
\end{align*}
Moreover, the cut-off functions $\chi_{\intt}(\xi),\chi_{\bdd}(\xi),\chi_{\extt}(\xi)\in \mathcal{C}^{\infty}$ having their supports in the corresponding zones $\ml{Z}_{\intt}(\varepsilon_0)$, $\ml{Z}_{\bdd}(\varepsilon_0/2,2N_0)$ and $\ml{Z}_{\extt}(N_0)$, respectively, satisfying
\begin{align*}
	\chi_{\bdd}(\xi)=1-\chi_{\intt}(\xi)-\chi_{\extt}(\xi)\ \ \mbox{for all}\ \ \xi \in \mb{R}^n.
\end{align*}

The differential operators $\langle D\rangle^s$ and $|D|^s$ have their symbols $\langle\xi\rangle^s$ and $|\xi|^s$, respectively, with any $s\in\mb{R}$, where we denote the Japanese bracket via $\langle\xi\rangle^2:=1+|\xi|^2$.

The mean of summable function $f_0=f_0(x)$ is denoted by $P_{f_0}:=\int_{\mb{R}^n}f_0(x)\,\mathrm{d}x$ that is also the 0-th moment condition of $f_0$.

\section{Main results}\setcounter{equation}{0}\label{Section-Main-Results}
\hspace{5mm}Before stating our main theorem (the reason for $\gamma\neq\frac{1}{4\tau}$ is addressed in Remark \ref{Rem-special-linear}), we are going to re-express the large time profile $\widetilde{\psi}=\widetilde{\psi}(t,x)$ defined in \eqref{Profile-Nonlinear}. Let us introduce the $\gamma$-dependent Gaussian function
\begin{align*}
G(t,x):=\frac{1}{\gamma}\ml{F}^{-1}_{\xi\to x}\left(\mathrm{e}^{-\frac{1}{\gamma}|\xi|^2t}\right)=c_{n,\gamma} \,t^{-\frac{n}{2}}\,\mathrm{e}^{-\frac{\gamma|x|^2}{4t}},
\end{align*}
which also can be understood by $\ml{G}(t,|D|)$ associated with its Fourier transform $\widehat{\ml{G}}(t,|\xi|):=\frac{1}{\gamma}\,\mathrm{e}^{-\frac{1}{\gamma}|\xi|^2t}$.  Moreover, we denote the combined data $\Psi_0=\Psi_0(x)$ such that
\begin{align}\label{Combined-data}
\Psi_{0}(x):=\gamma\psi_0(x)+\psi_1(x)+\tau\psi_2(x).
\end{align}
As a consequence, the function
\begin{align}\label{An-Rep}
\widetilde{\psi}(t,x)=G(t,x)P_{\Psi_0-\tau\ml{N}(\psi_1,\nabla\psi_0)}
\end{align}
can be explained by the Gaussian function multiplying the 0-th moment condition of combined data $\Psi_0$ and nonlinear data $\ml{N}(\psi_1,\nabla\psi_0)$. Note that
\begin{align*}
\|(-\Delta)^j\widetilde{\psi}(t,\cdot)\|_{\dot{H}^{\sigma+2-j}}\approx t^{-\frac{n+2\sigma+4+2j}{4}}|P_{\Psi_0-\tau\ml{N}(\psi_1,\nabla\psi_0)}|\ \ \mbox{as} \ \ t\gg1
\end{align*}
holds for any $\sigma\in\{j-2,s\}$ with $j\in\{0,1,2\}$ and $s>0$, due to the Gaussian kernel $G(t,x)$. See also Lemma \ref{Lemma-3.1}.

\begin{theorem}\label{Thm-GESDS}
Let $\tau>0$, $\delta>0$, $0<\gamma\neq\frac{1}{4\tau}$, and $\sigma\in\{j-2,s\}$ for $j\in\{0,1,2\}$. Let $(\psi_0,\psi_1,\psi_2)\in (H^{s+2}\cap L^1)\times (H^{s+1}\cap L^1)\times (H^s\cap L^1)$ with $s>\max\{\frac{n}{2}-1,0\}$ for any $n\geqslant 1$ being sufficiently small in the corresponding topology. Then, the weakly damped JMGT equation in the sub-critical case has a uniquely determined global in-time Sobolev solution
	\begin{align*}
	\psi\in\ml{C}([0,+\infty),H^{s+2})\cap \ml{C}^1([0,+\infty),H^{s+1})\cap \ml{C}^2([0,+\infty),H^s)
\end{align*}
satisfying the optimal decay estimate
\begin{align}\label{Upper-Nonlinear}
	\|\partial_t^j\psi(t,\cdot)\|_{\dot{H}^{\sigma+2-j}}\lesssim (1+t)^{-\frac{n+2\sigma+4+2j}{4}}\|(\psi_0,\psi_1,\psi_2)\|_{(H^{s+2}\cap L^1)\times (H^{s+1}\cap L^1)\times (H^s\cap L^1)}.
\end{align}
 Moreover, the refined estimate
\begin{align}\label{Asymptotic-Nonlinear}
	\|\partial_t^j\psi(t,\cdot)-(-\Delta)^{j}\widetilde{\psi}(t,\cdot)\|_{\dot{H}^{\sigma+2-j}}=o\big(t^{-\frac{n+2\sigma+4+2j}{4}}\big)
\end{align}
and the optimal lower bound estimate
\begin{align}\label{Lower-Nonlinear}
	\|\partial_t^j\psi(t,\cdot)\|_{\dot{H}^{\sigma+2-j}}\gtrsim t^{-\frac{n+2\sigma+4+2j}{4}}|M_{\tau,\gamma}|
\end{align}
hold for large time $t\gg1$, provided that $M_{\tau,\gamma}\neq0$ defined in \eqref{M-Mom}.
\end{theorem}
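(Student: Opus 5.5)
The proof has four parts: linear estimates via the Fourier symbol, a small-data fixed point in a time-weighted space, the asymptotic profile through Duhamel's formula, and the lower bound.

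\textbf{Linear estimates.} We take the partial Fourier transform of the linear problem
\[
\tau\varphi_{ttt}+\varphi_{tt}-\Delta\varphi-(\delta+\tau)\Delta\varphi_t+\gamma\varphi_t=0 .
\]
Its characteristic polynomial is
\[
\tau\lambda^3+\lambda^2+(\gamma+(\delta+\tau)|\xi|^2)\lambda+|\xi|^2=0 .
\]
We treat the three zones $\ml{Z}_{\intt}$, $\ml{Z}_{\bdd}$, $\ml{Z}_{\extt}$ separately.

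\emph{Small frequencies.} At $\xi=0$ the roots are $0$ and the two roots of $\tau\lambda^2+\lambda+\gamma=0$. The assumption $\gamma\neq\frac{1}{4\tau}$ ensures these two roots are distinct, with negative real parts. Perturbing, we get $\lambda_1=-\frac{1}{\gamma}|\xi|^2+O(|\xi|^4)$, while $\lambda_{2,3}$ stay exponentially damped.

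\emph{Bounded and large frequencies.} For $\delta>0$ a Routh--Hurwitz argument (or the known MGT energy method) gives $\mathrm{Re}\,\lambda_k\leqslant -c$ on $\ml{Z}_{\bdd}\cup\ml{Z}_{\extt}$, with no loss of regularity.

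\emph{Kernel and profile.} Solving for the coefficient of $\mathrm{e}^{\lambda_1 t}$ by Cramer's rule, it equals $\frac{1}{\gamma}\widehat{\Psi}_0+O(|\xi|^2)$, with $\Psi_0$ the combined data \eqref{Combined-data}. This gives the linear solution kernels $\widehat{K}_k(t,\xi)$, $k=0,1,2$, and the estimates
\begin{align*}
\|\partial_t^j\varphi(t)\|_{\dot H^{\sigma+2-j}}\lesssim(1+t)^{-\frac{n+2\sigma+4+2j}{4}}\|(\psi_0,\psi_1,\psi_2)\|_{(H^{s+2}\cap L^1)\times(H^{s+1}\cap L^1)\times(H^s\cap L^1)} ,
\end{align*}
together with
\[
\|\partial_t^j\varphi(t)-(-\Delta)^j G(t)P_{\Psi_0}\|_{\dot H^{\sigma+2-j}}=o\big(t^{-\frac{n+2\sigma+4+2j}{4}}\big).
\]
Note that $\partial_t^j$ of $\mathrm{e}^{-|\xi|^2t/\gamma}$ produces $(-|\xi|^2/\gamma)^j$, consistent with the $(-\Delta)^j$ appearing after the normalization in $G$. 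The $o(\cdot)$ comes from the standard argument: $|\widehat{\Psi}_0(\xi)-P_{\Psi_0}|\to0$ as $\xi\to0$, combined with the Gaussian weight and density in $L^1$. I take this as the content of Lemma \ref{Lemma-3.1} and the linear theorems.

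\textbf{Global existence.} We use Duhamel's principle. The source $\partial_t\ml{N}$ enters as the third-order forcing $\frac1\tau\partial_t\ml{N}$, so
\[
\psi=\psi^{\lin}+\int_0^t K_2(t-\eta)\,\partial_\eta\ml{N}(\eta)\,\mathrm{d}\eta .
\]
For $j\in\{0,1\}$ we integrate by parts in $\eta$:
\[
\int_0^tK_2(t-\eta)\partial_\eta\ml{N}\,\mathrm d\eta=-K_2(t)\ml{N}(0)+\int_0^t\partial_tK_2(t-\eta)\ml{N}(\eta)\,\mathrm d\eta ,
\]
using $K_2(0)=0$. The boundary term $-K_2(t)\ml N(\psi_1,\nabla\psi_0)$ shifts the moment $P_{\Psi_0}$ to $P_{\Psi_0-\tau\ml N(\psi_1,\nabla\psi_0)}$, producing $M_{\tau,\gamma}$. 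The kernel $\partial_tK_2$ carries an extra $|\xi|^2$ at low frequency, giving an extra $t^{-1}$ decay.

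For $j=2$ we instead use the representation with $\partial_t^2K_2$ and keep $\partial_t\ml N$ where needed (the analogue of \eqref{Rep-non-2}). This avoids the slowly decaying term $|\nabla\psi|^2$, which is predicted to decay like $t^{-1}|\psi_t|^2$, multiplying a kernel that gains no $|\xi|^2$.

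We define the norm
\[
\|\psi\|_{X(T)}:=\sup_{t\in[0,T]}\sum_{j,\sigma}(1+t)^{\frac{n+2\sigma+4+2j}{4}}\|\partial_t^j\psi(t)\|_{\dot H^{\sigma+2-j}} .
\]
We estimate $\ml N$ in $L^1\cap H^{s}$ (and $\partial_t\ml N$ in $H^s$) using Hölder, fractional Leibniz and Sobolev embeddings; here $s>\frac n2-1$ gives $\nabla\psi,\psi_t\in L^\infty$. Low frequencies use $L^1$ data of the nonlinearity and high frequencies use $H^s$. We split the time integral into $[0,t/2]$ and $[t/2,t]$ and check that all weights close with $\|\psi\|_{X}^2$. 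A contraction argument for small data then gives global existence, uniqueness and \eqref{Upper-Nonlinear}.

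\textbf{Main obstacle: time derivatives in low dimensions.} The delicate step is $j=2$, $n=1$, where the time integrals are borderline. Here $\|\ml N(\eta)\|_{L^1}\lesssim(1+\eta)^{-\frac12}\|\psi\|_X^2$ is dominated by $|\nabla\psi|^2$, and the extra $|\xi|^2$ from the kernel is needed to avoid a logarithmic or worse loss. If integrating by parts twice fails, I would use the equation to rewrite $\psi_{tt}$ in terms of $\psi_t$, $\Delta\psi$ and $\ml N$, so that the $\partial_t\ml N$ contribution is handled in the $H^s$ norm with full exponential decay at high frequencies.

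\textbf{Profile and lower bound.} We decompose the nonlinear Duhamel term as
\[
\int_0^t\partial_tK_2(t-\eta)\ml N(\eta)\,\mathrm d\eta=\int_0^{t/2}\partial_tK_2(t)\ml N(\eta)\,\mathrm d\eta+\text{remainders}.
\]
Since $\widehat{\partial_tK_2}\sim-|\xi|^2\widehat{\ml G}$, its contribution beyond the moment shift is $o(t^{-\frac{n+2\sigma+4+2j}{4}})$, because the extra factor $t^{-1}$ beats the growth $\int_0^{t/2}\|\ml N\|_{L^1}\,\mathrm d\eta\lesssim t^{1/2}$ when $n=1$. The pieces on $[t/2,t]$ decay faster by the $X$-bounds. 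This gives \eqref{Asymptotic-Nonlinear}. Finally, the triangle inequality together with
\[
\|(-\Delta)^j\widetilde\psi(t)\|_{\dot H^{\sigma+2-j}}\approx t^{-\frac{n+2\sigma+4+2j}{4}}|M_{\tau,\gamma}|
\]
yields \eqref{Lower-Nonlinear}.
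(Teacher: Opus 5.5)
Your architecture matches the paper's: zone-wise Fourier expansions, and Duhamel with $K_2$ plus one integration by parts for $j\in\{0,1\}$. For $j=2$ you use the split representation \eqref{Rep-non-2}, then a contraction in the time-weighted space and the triangle inequality for the lower bound. However, the two steps that fix the \emph{precise} profile do not hold as written. Both slips are already in the paper's own proof.

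\textbf{The factor $(-|\xi|^2/\gamma)^j$.} You say this factor, produced by $\partial_t^j\mathrm{e}^{-|\xi|^2t/\gamma}$, is ``consistent with the $(-\Delta)^j$ after the normalization in $G$''. It is not. The factor $(-|\xi|^2/\gamma)^j$ is the symbol of $(\gamma^{-1}\Delta)^j=(-\gamma)^{-j}(-\Delta)^j$. A $j$-dependent factor, negative for odd $j$, cannot be absorbed into the fixed constant $\frac{1}{\gamma}$ in $G$. Since $\lambda_1=-\frac{1}{\gamma}|\xi|^2+O(|\xi|^4)$, the leading part of $\partial_t^j\widehat{\varphi}$ is $(-|\xi|^2/\gamma)^j\widehat{\mathcal{G}}\widehat{\Phi}_0$. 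So the correct comparison function is $\partial_t^j\widetilde{\psi}=\gamma^{-j}\Delta^j\widetilde{\psi}$. Moreover, $\partial_t^j(GM)-(-\Delta)^j(GM)=((-\gamma)^{-j}-1)(-\Delta)^j(GM)$ has $\dot{H}^{\sigma+2-j}$ norm $\approx|(-\gamma)^{-j}-1|\,t^{-\frac{n+2\sigma+4+2j}{4}}|M|$. Hence, whenever the moment is nonzero, the refined estimate with $(-\Delta)^j\widetilde{\psi}$ fails for $j=1$ for every $\gamma$, and for $j=2$ unless $\gamma=1$. The paper's \eqref{es1.1.2} writes $|\xi|^{2j}\widehat{\mathcal{G}}\widehat{\Phi}_0$ and makes the same slip.

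\textbf{The Duhamel constant and the moment.} You correctly note that the forcing is $\frac{1}{\tau}\partial_t\mathcal{N}$, but then drop the $\frac{1}{\tau}$. With $K_2$ the $\varphi_2$-kernel ($\partial_t^2\widehat{K}_2(0)=1$, $\widehat{K}_2\sim\tau\widehat{\mathcal{G}}$), one has $\partial_t^3\int_0^tK_2(t-\eta)F(\eta)\,\mathrm{d}\eta=F(t)+\int_0^t\partial_t^3K_2(t-\eta)F(\eta)\,\mathrm{d}\eta$. So your formula solves the equation with right-hand side $\tau\partial_t\mathcal{N}$. The paper's $\psi^{\non}$ is defined the same way.

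With the factor restored, the boundary term is $-\frac{1}{\tau}K_2(t)\ast\mathcal{N}(\psi_1,\nabla\psi_0)\sim-G\,P_{\mathcal{N}(\psi_1,\nabla\psi_0)}$. The moment is therefore $\int(\gamma\psi_0+\psi_1+\tau\psi_2-\frac{B}{2A}|\psi_1|^2-|\nabla\psi_0|^2)\,\mathrm{d}x$. This agrees with the exact identity $\frac{\mathrm{d}}{\mathrm{d}t}\int(\tau\psi_{tt}+\psi_t+\gamma\psi-\mathcal{N}(\psi_t,\nabla\psi))\,\mathrm{d}x=0$.

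For $\tau\neq1$ and $(\psi_0,\psi_1)\neq0$, the profile built from $M_{\tau,\gamma}$ is off already at $j=0$ by $(\tau-1)P_{\mathcal{N}(\psi_1,\nabla\psi_0)}G$. This has the critical size $t^{-\frac{n}{4}}$ in $L^2$. The lower bound with $M_{\tau,\gamma}$ also fails for data whose corrected moment vanishes.

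What your argument actually establishes is \eqref{Upper-Nonlinear}, together with the refined estimate and lower bound for $\partial_t^j$ of the Gaussian with the corrected moment.

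\textbf{Two smaller points.} First, the rate in your ``main obstacle'' paragraph is wrong. You have $\|\mathcal{N}(\eta)\|_{L^1}\lesssim\|\nabla\psi\|_{L^2}^2+\|\psi_t\|_{L^2}^2\lesssim(1+\eta)^{-\frac{n+2}{2}}\|\psi\|_X^2$, which is $(1+\eta)^{-\frac32}$ for $n=1$. This is integrable, so there is no $t^{\frac12}$ growth. The genuine obstruction for $j=2$, $n=1$ is the local term $\mathcal{N}(\psi_t,\nabla\psi)(t)$ that $\partial_t^2\widehat{K}_2(0)=1$ produces when one integrates by parts on all of $[0,t]$. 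Its $\dot{H}^{\sigma}$ norm decays only like $(1+t)^{-\frac{3n+2\sigma+4}{4}}$, slower than the target $(1+t)^{-\frac{n+2\sigma+8}{4}}$ when $n=1$. The remedy you name is the paper's and suffices: integrate by parts only on $[0,\frac{t}{2}]$ and keep $\partial_t\mathcal{N}$ against $\partial_t^2K_2$ on $[\frac{t}{2},t]$. Your fallbacks are unnecessary.

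Second, after integrating by parts, the high frequencies need $\mathcal{N}\in\dot{H}^{s+1}$, not merely $H^s$. There $|\xi|^{s+2-j}|\partial_t^{j+1}\widehat{K}_2|\lesssim|\xi|^{s+1}\mathrm{e}^{-ct}$. This regularity is available because $s+1>\frac{n}{2}$.
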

\begin{remark}\label{Rem-Compare-Nonlinear}
We are going to compare our result in Theorem \ref{Thm-GESDS} with the one for the classical JMGT equation \eqref{Eq-Real-JGMT-W} in the whole space $\mb{R}^n$ via the next table.
\begin{table}[http]
	\centering	
	\begin{tabular}{ccccc}
		\toprule
		Sub-critical JMGT equation  & Reference & Optimal rate  & Multiplier & Moment \\
		\midrule
		\multirow{2}{*}{Weakly damped model \eqref{Eq-Dissipation-JMGT}}& \multirow{2}{*}{Theorem \ref{Thm-GESDS}} & \multirow{2}{*}{$t^{-\frac{n}{4}}$} & \multirow{2}{*}{$\frac{1}{\gamma}\,\mathrm{e}^{-\frac{1}{\gamma}|\xi|^2t}$} & \multirow{2}{*}{$M_{\tau,\gamma}\neq0$}\\
		&&&&\\
		\midrule
		\multirow{2}{*}{Classical model \eqref{Eq-Real-JGMT-W}}  & \multirow{2}{*}{\cite{Racke-Said-2020,Chen-Takeda=2023}} & $\sqrt{t}$ if $n=1$; $\sqrt{\ln t}$ if $n=2$ & \multirow{2}{*}{$\frac{\sin(|\xi|t)}{|\xi|}\,\mathrm{e}^{-\frac{\delta}{2}|\xi|^2t}$}& \multirow{2}{*}{$M_{\tau,0}\neq0$}\\
		&&$t^{\frac{1}{2}-\frac{n}{4}}$ if $n\geqslant 3$&&\\
		\bottomrule
		\multicolumn{5}{l}{$\star$ The optimal estimates and asymptotic profiles for large time are served for the solution itself}
	\end{tabular}
	\caption{A comparison of sharp large time behavior for the sub-critical JMGT equations}
	\label{Table+3}
\end{table}

\noindent We notice from Table \ref{Table+3} that the weak damping term $+\gamma\psi_t$ eliminates the large time instability when $n\leqslant 2$, and improves the optimal decay rate by $t^{-\frac{1}{2}}$ when $n\geqslant 3$, from the classical case $\gamma=0$. The diffusion-wave profile as $\gamma=0$ changes into the $\gamma$-dependent Gaussian diffusion profile as $\gamma>0$ for large time $t\gg1$. Moreover, the new factor $+\gamma\psi_0(x)$ in the moment condition $M_{\tau,\gamma}\neq0$ arises. However, their assumptions for the regularities of initial data are the same.
\end{remark}

\begin{remark}
	By some interpolations, e.g. Lemma \ref{fractionalgagliardonirenbergineq}, one may derive $L^p$ estimates for the global in-time solution as a byproduct. For example,
	\begin{align*}
		\|\psi(t,\cdot)\|_{L^p}&\lesssim\|\psi(t,\cdot)\|_{L^2}^{1-\frac{n}{2(s+2)}(1-\frac{2}{p})}\|\psi(t,\cdot)\|_{\dot{H}^{s+2}}^{\frac{n}{2(s+2)}(1-\frac{2}{p})}\\
		&\lesssim (1+t)^{-\frac{n}{2}(1-\frac{1}{p})}\|(\psi_0,\psi_1,\psi_2)\|_{(H^{s+2}\cap L^1)\times (H^{s+1}\cap L^1)\times (H^s\cap L^1)}
	\end{align*}
	via the optimal estimate \eqref{Upper-Nonlinear} with $\sigma\in\{-2,s\}$, holds for
	\begin{align*}
		2\leqslant p\leqslant\begin{cases}
			+\infty&\mbox{if}\ \ n\leqslant 2(s+2),\\[0.5em]
			\displaystyle{\frac{2n}{n-2(s+2)}}&\mbox{if}\ \ n>2(s+2).
		\end{cases}
	\end{align*}
	Due to the optimality of used estimate \eqref{Upper-Nonlinear}, one may expect that the last $L^p$ estimate is sharp.
	The restriction on the exponent $p$ comes from the application of fractional Gagliardo-Nirenberg inequality, i.e. $\frac{n}{2(s+2)}(1-\frac{2}{p})\in[0,1]$.
\end{remark}

\begin{remark}
Our approach in proving Theorem \ref{Thm-GESDS} can be applied to derive similar results for the sub-critical Westervelt-type model as follows:
\begin{align}\label{Westervelt}
\tau\psi_{ttt}+\psi_{tt}-\Delta\psi-(\delta+\tau)\Delta\psi_t+\gamma\psi_t=\left(1+\frac{B}{2A}\right)\partial_t(|\psi_t|^2)\ \ \mbox{with}\ \ \delta>0
\end{align}
 in the whole space $\mb{R}^n$ for any $n\geqslant 1$, where we neglected the local nonlinear effect by the substitution corollary $|\nabla\psi|^2= |\psi_t|^2$ allowed in the weakly nonlinear scheme (basing on the shape of acoustic field being “close” to that of plane wave). This kind of nonlinearity without $|\nabla\psi|^2$ can be treated easily due to the faster decay rate predicted by $|\,\xi\,\mathrm{e}^{-\frac{1}{\gamma}|\xi|^2t}|^2\approx t^{-1}|\partial_t\mathrm{e}^{-\frac{1}{\gamma}|\xi|^2t}|^2$ as $|\xi|\ll 1$, where the Fourier multiplier derives from $G(t,x)$. So, one may directly use \eqref{Cn} instead of \eqref{Rep-non-2}, which will be explained in Remark \ref{Rem-W-model}.
\end{remark}

The other main results contributing to large time asymptotic behavior (including optimal estimates, and profiles) for its linearized Cauchy problem \eqref{Eq-Dissipation-MGT} in the sub-critical case $\delta>0$ and the critical case $\delta=0$ will be stated in Theorem \ref{Theorem-Linear-Estimates} and Theorem \ref{Theorem-Linear-Estimates-2}, respectively. We also show a large time convergence of solution from the sub-critical case to the critical case.

Nevertheless, global in-time behavior for the weakly damped JMGT equation \eqref{Eq-Dissipation-JMGT} in the critical case $\delta=0$ remains open due to some difficulties from the regularity-loss decay property as well as the higher-order derivatives in $\partial_t\ml{N}(\psi_t,\nabla\psi)$.

\section{Weakly damped MGT equation}\setcounter{equation}{0}\label{Section-Linear}
\hspace{5mm}As our preparations for studying the nonlinear model and revealing underlying physical phenomena, this section principally contributes to large time asymptotic behavior for the corresponding linearized model to \eqref{Eq-Dissipation-JMGT}, namely,
\begin{align}\label{Eq-Dissipation-MGT}
	\begin{cases}
		\tau\varphi_{ttt}+\varphi_{tt}-\Delta\varphi-(\delta+\tau)\Delta\varphi_t+\gamma\varphi_t=0,&x\in\mb{R}^n,\ t>0,\\
		\varphi(0,x)=\varphi_0(x),\ \varphi_t(0,x)=\varphi_1(x),\ \varphi_{tt}(0,x)=\varphi_2(x),&x\in\mb{R}^n,
	\end{cases}
\end{align}
with $\tau>0$, $\delta\geqslant0$, and $\gamma>0$ but $\gamma\neq\frac{1}{4\tau}$, whose reason is stated in Remark \ref{Rem-special-linear}. This section is divided into two parts. We in the first part apply the WKB analysis to derive asymptotic behavior of solutions in the Fourier space according to the values of parameters $\tau,\delta,\gamma$, and the sizes of frequencies. Then, in the second part, some qualitative properties (including well-posedness, optimal decay estimates with/without regularity-loss, and asymptotic profiles) of solutions are obtained, where some detailed comparisons with the singular limit case $\tau=0$, or with the classical MGT equation $\gamma=0$ are addressed via several remarks.
\subsection{Asymptotic behavior of solutions in the Fourier space}\label{Sub-Section-Fourier-space}
\hspace{5mm}As usual, applying the partial Fourier transform with respect to $x\in\mb{R}^n$ to the linear Cauchy problem \eqref{Eq-Dissipation-MGT}, we thus deduce
\begin{align}\label{Eq-Dissipation-GT}
	\begin{cases}
		\tau\widehat{\varphi}_{ttt}+\widehat{\varphi}_{tt}+[\gamma+(\delta+\tau)|\xi|^2]\widehat{\varphi}_t+|\xi|^2\widehat{\varphi}=0,&\xi\in\mb{R}^n,\ t>0,\\
		\widehat{\varphi}(0,\xi)=\widehat{\varphi}_0(\xi),\ \widehat{\varphi}_t(0,\xi)=\widehat{\varphi}_1(\xi),\ \widehat{\varphi}_{tt}(0,\xi)=\widehat{\varphi}_2(\xi),&\xi\in\mb{R}^n,
	\end{cases}
\end{align}
whose characteristic equation is given by
\begin{align}\label{Eq-characteristic}
	\tau\lambda^3+\lambda^2+[\gamma+(\delta+\tau)|\xi|^2] \lambda+|\xi|^2=0.
\end{align}
The discriminant characterizing the roots of this cubic is 
\begin{align*}
	\triangle_{\mathrm{Dis}}(|\xi|)= \begin{cases}
		-3\gamma^2(1-4\gamma\tau)+O(|\xi|^2)&\text{if}\ 
		\ \xi\in\ml{Z}_{\intt}(\varepsilon_0) , \\
		12\tau(\delta+\tau)^3|\xi|^6+O(|\xi|^4)&\text{if}\ 
		\ \xi\in\ml{Z}_{\extt}(N_0) .
	\end{cases}
\end{align*}
\subsubsection{Asymptotic expansions for the characteristic roots}
\hspace{5mm}The characteristic roots are heavily influenced by the parameters $\tau, \delta, \gamma$, and the sizes of frequencies. Therefore, we are going to discuss them in three cases (relying on the frequencies) with further sub-cases (relying on the parameters).
\begin{remark}\label{Rem-special-linear}
	Concerning the limit case $\gamma=\frac{1}{4\tau}$ for small frequencies $\xi\in\ml{Z}_{\intt}(\varepsilon_0)$, the $|\xi|^0$-term in the discriminant vanishes, which leads to $\triangle_{\mathrm{Dis}}(|\xi|)>0$ if $\delta>\tau$ and $\triangle_{\mathrm{Dis}}(|\xi|)<0$ if $\delta\leqslant\tau$. In other words, one just needs to follow the same procedures as those in the cases $\gamma\neq\frac{1}{4\tau}$ later via similar but tedious and lengthy computations. For briefness, we do not consider this limit case in the present work to clarify concisely the influence of external weak damping term.
\end{remark}

\noindent\textbf{Case 1: Small Frequencies $\xi\in\ml{Z}_{\intt}(\varepsilon_0)$.}
Three roots for $j\in\{1,2,3\}$ can be expanded by 
\begin{align*}
	\lambda_j=\sum_{k=0}^{+\infty}\lambda_{j,k}|\xi|^k \ \ \text{with} \ \ \lambda_{j,k}\in\mb{C}.
\end{align*}
Note that the sign of discriminant is determined by the relation between $\gamma$ and $\tau$. 
\begin{itemize}
	\item {\bf Case 1.1: $\gamma>\frac{1}{4\tau}\Rightarrow\triangle_\mathrm{Dis}(|\xi|)>0$.} The cubic \eqref{Eq-characteristic} has a real root $\lambda_1$ and a pair of non-real complex conjugate roots $\lambda_{2,3}$ as follows: 
	\begin{align*}
		\lambda_1=-\frac{1}{\gamma}|\xi|^2+O(|\xi|^4),\ \ 
		\lambda_{2,3}=-\frac{1}{2\tau}\pm i\frac{\sqrt{4\gamma\tau-1}}{2\tau}+O(|\xi|^2).
	\end{align*}
	\item {\bf Case 1.2: $\gamma<\frac{1}{4\tau}\Rightarrow\triangle_{\mathrm{Dis}}(|\xi|)<0$.}  The cubic \eqref{Eq-characteristic} has three distinct real roots $\lambda_1$ and $\lambda_{2,3}$ as follows:
	\begin{align*}
		\lambda_1=-\frac{1}{\gamma}|\xi|^2+O(|\xi|^4),\ \ 
		\lambda_{2,3}=-\frac{1}{2\tau}\pm\frac{\sqrt{1-4\gamma\tau}}{2\tau}+O(|\xi|^2).
	\end{align*}
\end{itemize}

\noindent\textbf{Case 2: Large Frequencies $\xi\in\ml{Z}_{\extt}(N_0)$.}
Thanks to the fact that $\triangle_{\mathrm{Dis}}(|\xi|)>0$, the cubic \eqref{Eq-characteristic} has a real root $\lambda_1$ and a pair of non-real complex conjugate roots $\lambda_{2,3}$ that can be expanded for $j\in\{1,2,3\}$ by 
\begin{align*}
	\lambda_j=\bar{\lambda}_{j,1}|\xi|+\sum_{k=0}^{+\infty}\bar{\lambda}_{j,-k}|\xi|^{-k} \ \text{ with } \  \bar{\lambda}_{j,1},\bar{\lambda}_{j,-k}\in\mb{C}.
\end{align*}
To understand the influence of parameter $\delta$, similarly to the classical MGT equation without the additional weak damping term, we next separate our discussion into the sub-critical case $\delta>0$ and the critical case $\delta=0$. 
\begin{remark}
The super-critical case $\delta<0$ is instable in the sense that $\mathrm{Re}\,\lambda_{2}>0$, precisely, $\mathrm{Re}\,\lambda_2=\sqrt{-\frac{\delta+\tau}{\tau}}\,|\xi|$ if $\delta+\tau<0$; $\mathrm{Re}\,\lambda_{2}=2^{-1}\tau^{-\frac{1}{3}}|\xi|^{\frac{2}{3}}$ if $\delta+\tau=0$; $\mathrm{Re}\,\lambda_2=\frac{-\delta}{2\tau(\delta+\tau)}$ if $\delta+\tau>0$.
\end{remark}
\begin{itemize}
	\item {\bf Case 2.1: $\delta>0$.} Let us expand the roots through the Laurent series to deduce
	\begin{align*}
		\lambda_1=-\frac{1}{\delta+\tau}+O(|\xi|^{-1}),\ \ 
		\lambda_{2,3}=\pm i\sqrt{\frac{\delta}{\tau}+1}\ |\xi|-\frac{\delta}{2\tau(\delta+\tau)}+O(|\xi|^{-1}).
	\end{align*}
	\item {\bf Case 2.2: $\delta=0$.} Different from Case 2.1, the $|\xi|^0$-term in $\lambda_{2,3}$ vanishes (we thus require their non-trivial real parts) implying
	\begin{align*}
		\lambda_1=-\frac{1}{\tau}+O(|\xi|^{-1}),\ \ 
		\lambda_{2,3}=\pm i|\xi|\pm i\frac{\gamma}{2\tau}|\xi|^{-1}-\frac{\gamma}{2\tau^2}|\xi|^{-2}+O(|\xi|^{-3}).
	\end{align*}
	Notice that $\lambda_{2,3}|_{\gamma=0}=\pm i|\xi|$, namely, $\mathrm{Re}\,\lambda_{2,3}|_{\gamma=0}=0$. This step preliminarily reveals the significant difference between these two cases depending on $\gamma$.
\end{itemize}

\noindent\textbf{Case 3: Bounded Frequencies $\xi\in\ml{Z}_{\bdd}(\varepsilon_0,N_0)$.}
We will argue by contradiction. There exists $j_0\in\{1,2,3\}$ such that $\lambda_{j_0}=ia_{j_0}$ with $a_{j_0}\in\mb{R}\backslash\{0\}$ which means \eqref{Eq-characteristic} has a pure imaginary root. It satisfies
\begin{align*}
	-i\tau a_{j_0}^3-a_{j_0}^2+i[\gamma+(\delta+\tau)|\xi|^2]a_{j_0}+|\xi|^2=0,
\end{align*}
which implies a contradiction according to
\begin{align*}
\begin{cases}
	-\tau a_{j_0}^3+[\gamma+(\delta+\tau)|\xi|^2] a_{j_0}=0,\\
	-a_{j_0}^2+|\xi|^2=0,
\end{cases}	\Rightarrow\ \ |\xi|^2=a_{j_0}^2=\frac{1}{\tau}[\gamma+(\delta+\tau)|\xi|^2].
\end{align*}
Thus, the roots of \eqref{Eq-characteristic} cannot be pure imaginary. Recalling that $\mathrm{Re}\,\lambda_j<0$ for $\xi\in\ml{Z}_{\intt}(\varepsilon_0)\cup\ml{Z}_{\extt}(N_0)$ in the last two cases and using the continuity of characteristic roots with respect to $|\xi|$, we claim immediately $\mathrm{Re}\,\lambda_j<0$ for all $j\in\{1,2,3\}$ as $\xi\in\ml{Z}_{\bdd}(\varepsilon_0,N_0)$.

\subsubsection{Asymptotic representations and pointwise estimates of solutions}
\hspace{5mm}Due to the previous expansions in different cases, one has to estimate the solution $\widehat{\varphi}$ as well as its time-derivatives in the corresponding circumstances.
\medskip

\noindent\textbf{Estimates in Case 1.1.}
Thanks to the distinct characteristic roots, the solution to the Cauchy problem \eqref{Eq-Dissipation-GT} is uniquely expressed via
\begin{align}\label{expantion of root1}
	\widehat{\varphi}=\widehat{K}_0\widehat{\varphi}_0+\widehat{K}_1\widehat{\varphi}_1+\widehat{K}_2\widehat{\varphi}_2
\end{align}
equipping the kernels in the Fourier space $\widehat{K}_{l}=\widehat{K}_{l}(t,|\xi|)$ for $l\in\{0,1,2\}$ as follows:
\begin{align*}
	\widehat{K}_0:=\sum_{j\in\{1,2,3\}}\frac{\mathrm{e}^{\lambda_jt}\prod_{k\neq j}\lambda_k}{\prod_{k\neq j}(\lambda_j-\lambda_k)},\ \ 
	\widehat{K}_1:=-\sum_{j\in\{1,2,3\}}\frac{\mathrm{e}^{\lambda_jt}\sum_{k\neq j}\lambda_k}{\prod_{k\neq j}(\lambda_j-\lambda_k)},\ \ 
	\widehat{K}_2:=\sum_{j\in\{1,2,3\}}\frac{\mathrm{e}^{\lambda_jt}}{\prod_{k\neq j}(\lambda_j-\lambda_k)}.
\end{align*}
Note that $k\in\{1,2,3\}$ in the last formulas.
The pair of complex conjugate roots can be re-expressed via $\lambda_{2,3}=\lambda_{\mathrm{RS}}\pm i\lambda_{\mathrm{IS}}$ (the subscript ``S'' means the situation of small frequencies), in which we denote
\begin{align}\label{1.1root}
	\lambda_{\mathrm{RS}}=-\frac{1}{2\tau}+O(|\xi|^2)\ \ \text{and}\ \  \lambda_{\mathrm{IS}}=\frac{\sqrt{4\gamma\tau-1}}{2\tau}+O(|\xi|^2).
\end{align}
Benefiting from the conjugate structure of $\lambda_{2,3}$ and the Euler formula, we may rewrite the solution's expression \eqref{expantion of root1} by
\begin{align}\label{expantion of root2}
	\widehat{\varphi}&=\frac{({\lambda^2_{\mathrm{IS}}}+\lambda_{\mathrm{RS}}^2)\widehat{\varphi}_0-2\lambda_{\mathrm{RS}}\widehat{\varphi}_1+\widehat{\varphi}_2}{(\lambda_1-\lambda_{\mathrm{RS}})^2+\lambda_{\mathrm{IS}}^2}\,\mathrm{e}^{\lambda_1t}+
	\frac{(\lambda_1-2\lambda_{\mathrm{RS}})\lambda_1\widehat{\varphi}_0+2\lambda_{\mathrm{RS}}\widehat{\varphi}_1-\widehat{\varphi}_2}{(\lambda_1-\lambda_{\mathrm{RS}})^2+\lambda_{\mathrm{IS}}^2}\cos(\lambda_{\mathrm{IS}}t)\,\mathrm{e}^{\lambda_{\mathrm{RS}}t}\notag\\
	&\quad\ +\frac{[\lambda_{\mathrm{RS}}(\lambda_{\mathrm{RS}}-\lambda_1)-\lambda_{\mathrm{IS}}^2]\lambda_1\widehat{\varphi}_0+(\lambda_1^2-\lambda_{\mathrm{RS}}^2+\lambda_{\mathrm{IS}}^2)\widehat{\varphi}_1+(\lambda_{\mathrm{RS}}-\lambda_1)\widehat{\varphi}_2}{\lambda_{\mathrm{IS}}[(\lambda_1-\lambda_{\mathrm{RS}})^2+\lambda_{\mathrm{IS}}^2]}\sin(\lambda_{\mathrm{IS}}t)\,\mathrm{e}^{\lambda_{\mathrm{RS}}t}.
\end{align}
Plugging \eqref{1.1root} and $\lambda_1=-\frac{1}{\gamma}|\xi|^2+O(|\xi|^4)$ into \eqref{expantion of root2}, by straightforward calculations, one arrives at
\begin{align}\label{es1.1.1}
	\chi_{\intt}(\xi)|\partial_t^j\widehat{\varphi}|&\lesssim\chi_{\intt}(\xi)\left(|\xi|^{2j}\,\mathrm{e}^{-c|\xi|^2t}+\mathrm{e}^{-ct}\right)(|\widehat{\varphi}_0|+|\widehat{\varphi}_1|+|\widehat{\varphi}_2|)
\end{align}
for all $j\in\{0,\dots,3\}$.
Furthermore, let us recall the Fourier multiplier $\widehat{\ml{G}}(t,|\xi|)=\frac{1}{\gamma}\,\mathrm{e}^{-\frac{1}{\gamma}|\xi|^2t}$, and introduce the combined data in the Fourier space via $\widehat{\Phi}_{0}:=\gamma\widehat{\varphi}_0+\widehat{\varphi}_1+\tau\widehat{\varphi}_2$ similarly to \eqref{Combined-data}. Analogously to the last estimate, we deduce that
\begin{align}\label{es1.1.2}
	\chi_{\intt}(\xi)|\partial_t^j\widehat{\varphi}-|\xi|^{2j}\widehat{\ml{G}}(t,|\xi|)\widehat{\Phi}_{\mathrm{0}}|
	&\lesssim\chi_{\intt}(\xi)\big(|\xi|^{2j+2}\,\mathrm{e}^{-c|\xi|^2t}+\mathrm{e}^{-ct}\big)(|\widehat{\varphi}_0|+|\widehat{\varphi}_1|+|\widehat{\varphi}_2|)
\end{align}
for all $j\in\{0,\dots,3\}$, in which we used some error estimates in \eqref{expantion of root2} by means of subtracting the corresponding leading terms from the asymptotic analysis, for instance,
\begin{align*}
	&\chi_{\intt}(\xi)\left|\frac{\widehat{\varphi}_2}{(\lambda_1-\lambda_{\mathrm{RS}})^2+\lambda_{\mathrm{IS}}^2}\,\mathrm{e}^{\lambda_1t}-\widehat{\ml{G}}(t,|\xi|)\tau\widehat{\varphi}_2\right|\\
	&\lesssim\chi_{\intt}(\xi)\left|\frac{1}{\frac{\gamma}{\tau}+O(|\xi|^2)}\,\mathrm{e}^{-\frac{1}{\gamma}|\xi|^2t+O(|\xi|^4)t}-\frac{\tau}{\gamma}\,\mathrm{e}^{-\frac{1}{\gamma}|\xi|^2t}\right||\widehat{\varphi}_2| \\
	&\lesssim\chi_{\intt}(\xi)\left(\frac{|\xi|^2}{\frac{\gamma}{\tau}[\frac{\gamma}{\tau}+O(|\xi|^2)]}\,\mathrm{e}^{-c|\xi|^2t}+|\xi|^4t\,\mathrm{e}^{-\frac{1}{\gamma}|\xi|^2t}\int_0^1\mathrm{e}^{O(|\xi|^4)t\eta}\,\mathrm{d}\eta\right)|\widehat{\varphi}_2|\\
	&\lesssim\chi_{\intt}(\xi)|\xi|^2\,\mathrm{e}^{-c|\xi|^2t}\,|\widehat{\varphi}_2|.
\end{align*}
Here, we employed $|\xi|^4t\,\mathrm{e}^{-\frac{1}{2\gamma}|\xi|^2t}\lesssim|\xi|^2\,\mathrm{e}^{-\frac{c}{2}|\xi|^2t}$ and $\mathrm{e}^{-\frac{1}{2\gamma}|\xi|^2t}\int_0^1\mathrm{e}^{O(|\xi|^4)t\eta}\,\mathrm{d}\eta\lesssim \mathrm{e}^{-\frac{c}{2}|\xi|^2t}$ for $\xi\in\ml{Z}_{\intt}(\varepsilon_0)$.
\medskip

\noindent\textbf{Estimates in Case 1.2.} Although three distinct characteristic roots are real (different from those with the conjugate structure in Case 1.1), we still are able to plug the asymptotic expansions of these roots into the general representation \eqref{expantion of root1} to derive the estimates \eqref{es1.1.1} and \eqref{es1.1.2} for all $j\in\{0,\dots,3\}$. The derivation of refined estimate \eqref{es1.1.2} needs to subtract the corresponding leading terms motivated by the asymptotic analysis, for example,
\begin{align*}
&\chi_{\intt}(\xi)|\widehat{K}_0(t,|\xi|)\widehat{\varphi}_0-\widehat{\ml{G}}(t,|\xi|)\gamma\widehat{\varphi}_0|\\
&\lesssim\chi_{\intt}(\xi)\left(\,\left|\frac{\frac{\gamma}{\tau}+O(|\xi|^2)}{\frac{\gamma}{\tau}+O(|\xi|^2)}\,\mathrm{e}^{-\frac{1}{\gamma}|\xi|^2t+O(|\xi|^4)t}-\mathrm{e}^{-\frac{1}{\gamma}|\xi|^2t}\right|+|\xi|^2\,\mathrm{e}^{-ct}\right)|\widehat{\varphi}_0|\\
&\lesssim\chi_{\intt}(\xi)\big(|\xi|^{2}\,\mathrm{e}^{-c|\xi|^2t}+\mathrm{e}^{-ct}\big)|\widehat{\varphi}_0|.
\end{align*}
Note that the last two terms $O(|\xi|^2)$  are different.
Let us finally explain the reason for the same estimates between Case 1.1 and Case 1.2. The dominant parts in the leading terms of $\partial_t^j\widehat{\varphi}$ are generated by the first characteristic root $\lambda_1\sim-\frac{1}{\gamma}|\xi|^2$ as $|\xi|\to0$, which are identical. The different parts mainly come from the $|\xi|^0$-terms of $\lambda_2$ and $\lambda_3$. However, their dominant real parts being strictly negative imply exponential decay factors $\mathrm{e}^{-\frac{1}{2\tau}t}$ in Case 1.1 and $\mathrm{e}^{-(\frac{1}{2\tau}\mp\frac{\sqrt{1-4\gamma\tau}}{2\tau})t}$ in Case 1.2 that always are estimated by regular in-frequency remainders $|\xi|^{p\geqslant0}$ with $\mathrm{e}^{-ct}$. Consequently, we have derived the same leading term $|\xi|^{2j}\widehat{\ml{G}}(t,|\xi|)\widehat{\Phi}_0$.
\medskip

\noindent\textbf{Estimates in Case 2.1.} 
Let us re-expressed the pair of complex conjugate roots via $\lambda_{2,3}=\lambda_{\mathrm{RL}}\pm i\lambda_{\mathrm{IL}}$ (the subscript ``L'' means the situation of large frequencies) carrying
\begin{align*}
	\lambda_{\mathrm{RL}}=-\frac{\delta}{2\tau(\delta+\tau)}+O(|\xi|^{-1})\ \ \mbox{and}\ \ 
	\lambda_{\mathrm{IL}}=\sqrt{\frac{\delta}{\tau}+1}\ |\xi|+O(|\xi|^{-1}).
\end{align*}
We still may use the representation \eqref{expantion of root2} after replacing $\lambda_{\mathrm{IS}}$ [resp. $\lambda_{\mathrm{RS}}]$ by $\lambda_{\mathrm{IL}}$ [resp. $\lambda_{\mathrm{RL}}]$. Consequently, associated with the asymptotic expansions of these roots, one computes directly
\begin{align}\label{es2.1.1}
	\chi_{\extt}(\xi)|\partial_t^j\widehat{\varphi}|&\lesssim\chi_{\extt}(\xi)\,\mathrm{e}^{-ct}(|\xi|^{\max\{j-1,0\}}|\widehat{\varphi}_0|+|\xi|^{j-1}|\widehat{\varphi}_1|+|\xi|^{j-2}|\widehat{\varphi}_2|)
\end{align}
for all $j\in\{0,\dots,3\}$. Note that we do not consider an error estimate because of its exponential decay factor $\mathrm{e}^{-ct}$ independent of $|\xi|$.
\medskip

\noindent\textbf{Estimates in Case 2.2.}
Analogously to Case 2.1, the real and imaginary parts of $\lambda_{2,3}$ are denoted, respectively, by
\begin{align*}
	 \lambda_{\mathrm{RL}}=-\frac{\gamma}{2\tau^2}|\xi|^{-2}+O(|\xi|^{-3})\ \ \mbox{and}\ \  \lambda_{\mathrm{IL}}=|\xi|+\frac{\gamma}{2\tau}|\xi|^{-1}+O(|\xi|^{-3}).
\end{align*}
Let us now plug the updated asymptotic expansions of three roots into the representation \eqref{expantion of root2}. Therefore, by straightforward computations, one obtains
\begin{align}\label{es2.2.1}
	\chi_{\extt}(\xi)|\partial_t^j\widehat{\varphi}|&\lesssim\chi_{\extt}(\xi)\,\mathrm{e}^{-c|\xi|^{-2}t}(|\xi|^{\max\{j-1,0\}}|\widehat{\varphi}_0|+|\xi|^{j-1}|\widehat{\varphi}_1|+|\xi|^{j-2}|\widehat{\varphi}_2|)
\end{align}
and 
\begin{align}\label{es2.2.2}
	&\chi_{\extt}(\xi)\big|\partial_t^j\widehat{\varphi}-\mathrm{e}^{-\frac{\gamma}{2\tau^2}|\xi|^{-2}t}\big[\partial_t^j\sin(|\xi|t)\big(\tfrac{1}{\tau^2}|\xi|^{-1}\widehat{\varphi}_0+|\xi|^{-1}\widehat{\varphi}_1\big)-\partial_t^j\cos(|\xi|t)|\xi|^{-2}\widehat{\varphi}_2\big]\big|\notag\\
	&\lesssim\chi_{\extt}(\xi)|\xi|^{-1}\,\mathrm{e}^{-c|\xi|^{-2}t}(|\xi|^{\max\{j-1,0\}}|\widehat{\varphi}_0|+|\xi|^{j-1}|\widehat{\varphi}_1|+|\xi|^{j-2}|\widehat{\varphi}_2|)
\end{align}
for all $j\in\{0,\dots,3\}$.

\medskip

\noindent\textbf{Estimates in Case 3.}
Due to the negative real parts of all characteristic roots, it is easy to claim the following exponential decay estimate without asking for any Sobolev regularity of initial data:
\begin{align}\label{es3}
	\chi_{\bdd}(\xi)|\partial_t^j\widehat{\varphi}|\lesssim\chi_{\bdd}(\xi)\,\mathrm{e}^{-ct}(|\widehat{\varphi}_0|+|\widehat{\varphi}_1|+|\widehat{\varphi}_2|)
\end{align}
for all $j\in\{0,\dots,3\}$.

\subsection{Large time qualitative properties of solutions}\label{Sub-Section-Linear-Properties}
\hspace{5mm}It is well-known that the regularity of well-posedness is completely determined by the property of solutions for large frequencies. Therefore, by using \eqref{es2.1.1} if $\delta>0$ and \eqref{es2.2.1} if $\delta=0$, for all $j\in\{0,1,2\}$ and any $s\in\mb{R}$, it is not difficult to prove
\begin{align*}
\chi_{\extt}(\xi)\langle\xi\rangle^{s+2-j}|\partial_t^j\widehat{\varphi}|\lesssim \chi_{\extt}(\xi)(\langle\xi\rangle^{s+2}|\widehat{\varphi}_0|+\langle\xi\rangle^{s+1}|\widehat{\varphi}_1|+\langle\xi\rangle^{s}|\widehat{\varphi}_2|),
\end{align*}
which immediately concludes the next statement.
\begin{prop}\label{PROP-3.1}
	Let $(\varphi_0,\varphi_1,\varphi_2)\in H^{s+2}\times H^{s+1}\times H^s$ with $s\in\mb{R}$. Then, the weakly damped MGT equation \eqref{Eq-Dissipation-MGT} with $\delta\geqslant0$ has a uniquely determined global in-time Sobolev solution
	\begin{align*}
		\varphi\in\ml{C}([0,+\infty),H^{s+2})\cap \ml{C}^1([0,+\infty),H^{s+1})\cap \ml{C}^2([0,+\infty),H^s).
	\end{align*}
\end{prop}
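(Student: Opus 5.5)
Since the equation \eqref{Eq-Dissipation-MGT} is linear with constant coefficients, I will build the solution directly on the Fourier side. For each fixed $\xi$, the ODE problem \eqref{Eq-Dissipation-GT} has a unique solution. When the characteristic roots are distinct it is given by \eqref{expantion of root1}. At the finitely many values of $|\xi|$ where roots may coincide (only possible inside $\ml{Z}_{\bdd}$), I use the fundamental system with $t\mathrm{e}^{\lambda t}$ terms instead; equivalently, I write the solution as $\mathrm{e}^{tA(\xi)}$ applied to $(\widehat{\varphi}_0,\widehat{\varphi}_1,\widehat{\varphi}_2)$, with $A(\xi)$ the companion matrix. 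This gives measurability in $\xi$ and continuity in $t$ for free. I then set $\varphi:=\ml{F}^{-1}(\widehat{\varphi})$. Uniqueness follows because any tempered solution has a Fourier transform solving the same ODE pointwise, and that ODE solution is unique.

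The heart of the argument is the uniform weighted bound
\begin{align*}
\langle\xi\rangle^{s+2-j}|\partial_t^j\widehat{\varphi}(t,\xi)|\lesssim \langle\xi\rangle^{s+2}|\widehat{\varphi}_0|+\langle\xi\rangle^{s+1}|\widehat{\varphi}_1|+\langle\xi\rangle^{s}|\widehat{\varphi}_2|
\end{align*}
for $j\in\{0,1,2\}$, uniformly in $t\geqslant0$. I split by the cut-offs $\chi_{\intt}$, $\chi_{\bdd}$, $\chi_{\extt}$.

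\begin{itemize}
\item \emph{Small frequencies.} Here $\langle\xi\rangle\approx1$, and \eqref{es1.1.1} bounds $|\partial_t^j\widehat{\varphi}|$ by $|\widehat{\varphi}_0|+|\widehat{\varphi}_1|+|\widehat{\varphi}_2|$, which suffices.
\item \emph{Bounded frequencies.} Again $\langle\xi\rangle\approx1$, and \eqref{es3} gives the same conclusion.
\item \emph{Large frequencies.} I use \eqref{es2.1.1} when $\delta>0$ and \eqref{es2.2.1} when $\delta=0$. Both exponential factors are at most $1$. Multiplying by $\langle\xi\rangle^{s+2-j}\approx|\xi|^{s+2-j}$ gives powers $|\xi|^{s+2-j+\max\{j-1,0\}}\leqslant|\xi|^{s+2}$, $|\xi|^{s+1}$ and $|\xi|^{s}$ on the three data, which is exactly the desired bound.
\end{itemize}

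Squaring and integrating then gives $\|\partial_t^j\varphi(t,\cdot)\|_{H^{s+2-j}}\lesssim\|(\varphi_0,\varphi_1,\varphi_2)\|_{H^{s+2}\times H^{s+1}\times H^s}$ uniformly in $t$.

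For continuity in time, for instance $\varphi\in\ml{C}([0,+\infty),H^{s+2})$, I argue by dominated convergence. As $t\to t_0$, the quantity $\langle\xi\rangle^{2(s+2-j)}|\partial_t^j\widehat{\varphi}(t,\xi)-\partial_t^j\widehat{\varphi}(t_0,\xi)|^2\to0$ pointwise in $\xi$. It is dominated, using the bound above, by an $L^1$ function built from the data. The same argument gives the $\ml{C}^1$ and $\ml{C}^2$ statements, and it also shows that the time derivatives of $\varphi$ in the distributional sense coincide with $\ml{F}^{-1}(\partial_t^j\widehat{\varphi})$.

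The main obstacle I expect is making the bounded-frequency and root-collision issue rigorous. The estimate \eqref{es3} must hold uniformly on the compact set $\ml{Z}_{\bdd}$ even where the representation \eqref{expantion of root1} degenerates. I would handle this with the matrix-exponential formulation. The entries of $\mathrm{e}^{tA(\xi)}$ are continuous in $(t,\xi)$, and by the Case 3 argument every eigenvalue satisfies $\mathrm{Re}\,\lambda_j<0$. Continuity of the roots on the compact zone then gives a uniform bound $\max_j\mathrm{Re}\,\lambda_j\leqslant-2c<0$. A polynomial factor from Jordan blocks, of the form $(1+t)^2$, is absorbed into $\mathrm{e}^{-ct}$. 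This yields \eqref{es3} uniformly on $\ml{Z}_{\bdd}$.
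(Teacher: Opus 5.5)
Your proposal is correct and follows the paper's route. The paper's whole argument is the weighted pointwise bound $\langle\xi\rangle^{s+2-j}|\partial_t^j\widehat{\varphi}|\lesssim\langle\xi\rangle^{s+2}|\widehat{\varphi}_0|+\langle\xi\rangle^{s+1}|\widehat{\varphi}_1|+\langle\xi\rangle^{s}|\widehat{\varphi}_2|$, read off from \eqref{es2.1.1} for $\delta>0$ and \eqref{es2.2.1} for $\delta=0$ at large frequencies, with the other zones treated as harmless; your additions are correct details the paper leaves implicit, namely the matrix-exponential treatment of root collisions in $\ml{Z}_{\bdd}(\varepsilon_0,N_0)$ (which genuinely occur, e.g. when $\gamma<\frac{1}{4\tau}$), the uniqueness argument, and the dominated-convergence proof of time continuity.
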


It is interesting to study further fine properties of solutions after getting the global in-time well-posedness. To investigate large time asymptotic behavior, we assume the additional $L^1$ integrability (as well as some higher regularities when $\delta=0$) of Cauchy data. Before doing this, let us introduce some well-known preliminaries.
\begin{lemma}\label{Lemma-3.1}
	Let $s\in\mb{R}$ and $n+2s>0$. The optimal estimate
	\begin{align*}
	\left\|\chi_{\intt}(\xi)|\xi|^s\,\mathrm{e}^{-c|\xi|^2t}\right\|_{L^2}\approx t^{-\frac{n+2s}{4}}
	\end{align*}
	holds for large time $t\gg1$, which is bounded for $t\leqslant 1$.
\end{lemma}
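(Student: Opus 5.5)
The plan is to compute the $L^2$ norm directly in polar coordinates and then rescale frequency by $\sqrt{t}$. Since $\chi_{\intt}$ is radial, smooth, supported in $\{|\xi|\leqslant\varepsilon_0\}$, and (as I will assume, as usual) equal to $1$ on $\{|\xi|\leqslant\varepsilon_0/2\}$, we have
\begin{align*}
\left\|\chi_{\intt}(\xi)|\xi|^s\,\mathrm{e}^{-c|\xi|^2t}\right\|_{L^2}^2=|\mb{S}^{n-1}|\int_0^{\varepsilon_0}\chi_{\intt}^2(r)\,r^{2s+n-1}\,\mathrm{e}^{-2cr^2t}\,\mathrm{d}r.
\end{align*}
The hypothesis $n+2s>0$ is exactly what makes $r^{2s+n-1}$ integrable near $r=0$. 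For the case $t\leqslant1$, I bound $\mathrm{e}^{-2cr^2t}\leqslant1$ and $\chi_{\intt}\leqslant1$, which gives the uniform bound $\int_0^{\varepsilon_0}r^{2s+n-1}\,\mathrm{d}r=\varepsilon_0^{n+2s}/(n+2s)<+\infty$.

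For $t\gg1$, I substitute $\rho=r\sqrt{t}$. This gives
\begin{align*}
t^{-\frac{n+2s}{2}}\int_0^{\varepsilon_0\sqrt{t}}\chi_{\intt}^2(\rho/\sqrt{t})\,\rho^{2s+n-1}\,\mathrm{e}^{-2c\rho^2}\,\mathrm{d}\rho.
\end{align*}
For the upper bound, I drop the cut-off and extend the integral to $(0,+\infty)$. The result is $\Gamma$-type and finite, since $n+2s>0$ handles $\rho\to0$ and the Gaussian handles $\rho\to\infty$. For the lower bound, I restrict to $\rho\in[0,1]$. Once $t\geqslant 4/\varepsilon_0^2$, this range lies inside $\{r\leqslant\varepsilon_0/2\}$, where $\chi_{\intt}=1$. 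The remaining integral $\int_0^1\rho^{2s+n-1}\mathrm{e}^{-2c\rho^2}\,\mathrm{d}\rho$ is a positive constant independent of $t$. Taking square roots in both bounds yields $\approx t^{-\frac{n+2s}{4}}$.

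There is no real obstacle here. The only points needing care are the integrability at the origin, which is guaranteed by $n+2s>0$, and using the fact that the cut-off equals $1$ near the origin to get the lower bound.
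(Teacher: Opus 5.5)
The paper lists this lemma among its well-known preliminaries and gives no proof. Your argument is the standard one and is correct: polar coordinates, the rescaling $\rho=r\sqrt{t}$, extension to $(0,+\infty)$ for the upper bound, restriction to $\rho\in[0,1]$ for the lower bound, and $n+2s>0$ for integrability at the origin. You do not need to assume that $\chi_{\intt}=1$ near the origin, because the paper's conventions force it: $\chi_{\intt}=1-\chi_{\bdd}-\chi_{\extt}$, and both $\chi_{\bdd}$ and $\chi_{\extt}$ vanish on $\{|\xi|<\varepsilon_0/2\}$. Radial symmetry of the cut-off is also unnecessary, since only its boundedness and this identity near $0$ are used.
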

\begin{lemma}\label{Lemma-3.2}
Let $s\in\mb{R}$ and $\ell\geqslant 0$. The sharp upper bound estimate
\begin{align*}
\left\|\chi_{\extt}(\xi)|\xi|^s\,\mathrm{e}^{-c|\xi|^{-2}t}\widehat{f}_0\right\|_{L^2}\lesssim (1+t)^{-\frac{\ell}{2}}\|f_0\|_{H^{s+\ell}}
\end{align*}
holds for $f_0\in H^{s+\ell}$.
\end{lemma}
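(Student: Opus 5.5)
The statement to prove is Lemma \ref{Lemma-3.2}. The plan is a pointwise bound on the multiplier followed by Plancherel. Write
\[
|\xi|^s\,\mathrm{e}^{-c|\xi|^{-2}t}=|\xi|^{s+\ell}\cdot |\xi|^{-\ell}\,\mathrm{e}^{-c|\xi|^{-2}t}.
\]
Then
\[
\left\|\chi_{\extt}(\xi)|\xi|^s\,\mathrm{e}^{-c|\xi|^{-2}t}\widehat{f}_0\right\|_{L^2}\leqslant \sup_{|\xi|\geqslant N_0}\left(|\xi|^{-\ell}\,\mathrm{e}^{-c|\xi|^{-2}t}\right)\,\big\|\chi_{\extt}(\xi)|\xi|^{s+\ell}\widehat{f}_0\big\|_{L^2}.
\]
On $\ml{Z}_{\extt}(N_0)$ we have $|\xi|\approx\langle\xi\rangle$, with constants depending only on $N_0$ and the sign of $s+\ell$. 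Hence the last factor is bounded by $C\|\langle\xi\rangle^{s+\ell}\widehat{f}_0\|_{L^2}=C\|f_0\|_{H^{s+\ell}}$ by Plancherel. So everything reduces to a time-decay bound for the scalar supremum.

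For that supremum, set $y:=|\xi|^{-2}t\geqslant0$. Then
\[
|\xi|^{-\ell}\,\mathrm{e}^{-c|\xi|^{-2}t}=t^{-\frac{\ell}{2}}\,y^{\frac{\ell}{2}}\,\mathrm{e}^{-cy}\leqslant C_\ell\, t^{-\frac{\ell}{2}},
\]
since $y^{\ell/2}\mathrm{e}^{-cy}$ is bounded on $[0,+\infty)$ for $\ell\geqslant0$. For $t\leqslant1$ we instead use the trivial bound $|\xi|^{-\ell}\mathrm{e}^{-c|\xi|^{-2}t}\leqslant N_0^{-\ell}\leqslant1$, using $N_0\gg1$. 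Combining the two regimes gives the bound $(1+t)^{-\ell/2}$ uniformly on the exterior zone, which proves the estimate.

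There is no real obstacle. The only points needing care are the case $\ell=0$, where the bound is the trivial one $\mathrm{e}^{-c|\xi|^{-2}t}\leqslant1$, and the uniformity of the constants in $\xi$. Sharpness, which the statement asserts only informally, can be checked by taking $\widehat{f}_0$ concentrated near $|\xi|\approx\sqrt{t}$, where $y\approx1$. There the multiplier is of size $t^{-\ell/2}|\xi|^{s+\ell}$, so the loss of $\ell$ derivatives cannot be reduced. This is the regularity-loss mechanism that the paper uses in the case $\delta=0$.
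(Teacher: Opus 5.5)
Your proof is correct: the substitution $y=|\xi|^{-2}t$ gives $\sup_{|\xi|\geqslant N_0}|\xi|^{-\ell}\mathrm{e}^{-c|\xi|^{-2}t}\lesssim(1+t)^{-\ell/2}$, and combining this with $|\xi|\approx\langle\xi\rangle$ on the exterior zone and Plancherel is the standard argument. The paper states this lemma as a well-known preliminary without proof, and your route matches the sup-of-multiplier technique it uses for the analogous bound $\sup_{|\xi|\geqslant N_0}|\xi|^{s_0}\mathrm{e}^{-c|\xi|^2t}$ in the proof of Theorem \ref{Theorem-Linear-Estimates}.
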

\subsubsection{The sub-critical case $\delta>0$}
\hspace{5mm}Let us recall $\Phi_0=\Phi_0(x)$ such that $\Phi_{0}(x):=\gamma\varphi_0(x)+\varphi_1(x)+\tau\varphi_2(x)$, and state the main result, i.e. the optimal $(L^2\cap L^1)-L^2$ estimate and the asymptotic profile as large time, for the linear problem \eqref{Eq-Dissipation-MGT} in the sub-critical case $\delta>0$.
\begin{theorem}\label{Theorem-Linear-Estimates}
Let $\tau>0$, $\delta>0$, $0<\gamma\neq\frac{1}{4\tau}$, and $n+2s+4+2j>0$ for $j\in\{0,1, 2\}$. Let $(\varphi_0,\varphi_1,\varphi_2)\in (H^{s+2}\cap L^1)\times (H^{s+1}\cap L^1)\times (H^s\cap L^1)$. Then, the Sobolev solution to the weakly damped MGT equation \eqref{Eq-Dissipation-MGT} in the sub-critical case satisfies
\begin{align*}
\|\partial_t^j\varphi(t,\cdot)\|_{\dot{H}^{s+2-j}}\lesssim (1+t)^{-\frac{n+2s+4+2j}{4}}\|(\varphi_0,\varphi_1,\varphi_2)\|_{(H^{s+\max\{1,2-j\}}\cap L^1)\times (H^{s+1}\cap L^1)\times (H^s\cap L^1)},
\end{align*}
which is also valid for the inhomogeneous Sobolev data. Moreover, the refined estimate
\begin{align*}
\|\partial_t^j\varphi(t,\cdot)-(-\Delta)^{j}G(t,\cdot)P_{\Phi_0}\|_{\dot{H}^{s+2-j}}=o\big(t^{{-\frac{n+2s+4+2j}{4}}}\big)
\end{align*}
 and the optimal lower bound estimate
\begin{align*}
\|\partial_t^j\varphi(t,\cdot)\|_{\dot{H}^{s+2-j}}\gtrsim t^{-\frac{n+2s+4+2j}{4}}|P_{\Phi_0}|
\end{align*}
hold for large time $t\gg1$, provided that $P_{\Phi_0}\neq0$.
\end{theorem}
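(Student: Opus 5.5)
By Plancherel, we split $\partial_t^j\widehat{\varphi}$ with the cut-offs $\chi_{\intt}+\chi_{\bdd}+\chi_{\extt}=1$ and estimate each zone using the pointwise bounds of Subsection \ref{Sub-Section-Fourier-space}.

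\textbf{Upper bound.} For $\xi\in\ml{Z}_{\intt}(\varepsilon_0)$ we use \eqref{es1.1.1} (valid in Cases 1.1 and 1.2) together with $\|\widehat{\varphi}_l\|_{L^\infty}\leqslant\|\varphi_l\|_{L^1}$. This gives
\begin{align*}
\|\chi_{\intt}(\xi)|\xi|^{s+2-j}\partial_t^j\widehat{\varphi}\|_{L^2}\lesssim\left(\|\chi_{\intt}(\xi)|\xi|^{s+2+j}\mathrm{e}^{-c|\xi|^2t}\|_{L^2}+\mathrm{e}^{-ct}\|\chi_{\intt}(\xi)|\xi|^{s+2-j}\|_{L^2}\right)\|(\varphi_0,\varphi_1,\varphi_2)\|_{L^1\times L^1\times L^1}.
\end{align*}
By Lemma \ref{Lemma-3.1}, with $n+2(s+2+j)>0$, the first term decays like $(1+t)^{-\frac{n+2s+4+2j}{4}}$. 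The second term needs $|\xi|^{s+2-j}\in L^2(\ml{Z}_{\intt})$, i.e. $n+2s+4-2j>0$. When this fails (possible for $j=2$), we sharpen the exponentially decaying part of \eqref{es1.1.1}. The factors $\mathrm{e}^{\lambda_{2,3}t}$ come with coefficients that are bounded, and since $|\xi|^{s+2-j}\partial_t^j\widehat{\varphi}$ is what is measured, we go back to \eqref{expantion of root2}. If that is insufficient we simply use $L^2$ data there (the $\mathrm{e}^{-ct}$ part is harmless with $L^2$ data in the homogeneous norm only if $s+2-j\geqslant0$); this is the one delicate bookkeeping point. In $\ml{Z}_{\bdd}$ we use \eqref{es3}: $\langle\xi\rangle$ is comparable to $1$, so we get $\mathrm{e}^{-ct}$ times $L^2$ norms of the data. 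In $\ml{Z}_{\extt}$, \eqref{es2.1.1} gives
\begin{align*}
|\xi|^{s+2-j}|\partial_t^j\widehat{\varphi}|\lesssim\mathrm{e}^{-ct}\left(|\xi|^{s+1+\max\{1-j,0\}}|\widehat{\varphi}_0|+|\xi|^{s+1}|\widehat{\varphi}_1|+|\xi|^{s}|\widehat{\varphi}_2|\right),
\end{align*}
which explains the regularity $H^{s+\max\{1,2-j\}}\times H^{s+1}\times H^s$ and yields exponential decay. Since only $\widehat{\varphi}_l$ on bounded or large frequencies and $L^1$ at small frequencies enter, the estimate holds equally with inhomogeneous data.

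\textbf{Refined estimate.} The Fourier transform of $(-\Delta)^jG(t,\cdot)P_{\Phi_0}$ is $|\xi|^{2j}\widehat{\ml{G}}(t,|\xi|)P_{\Phi_0}$. We write
\begin{align*}
\partial_t^j\widehat{\varphi}-|\xi|^{2j}\widehat{\ml{G}}P_{\Phi_0}=\left(\partial_t^j\widehat{\varphi}-|\xi|^{2j}\widehat{\ml{G}}\widehat{\Phi}_0\right)+|\xi|^{2j}\widehat{\ml{G}}\left(\widehat{\Phi}_0-P_{\Phi_0}\right).
\end{align*}
On $\ml{Z}_{\intt}$, \eqref{es1.1.2} gives an extra factor $|\xi|^2$, hence a rate $t^{-\frac{n+2s+4+2j}{4}-\frac12}$, which is $o(\cdot)$. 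For the second piece we use the standard argument for $L^1$ data: split $\Phi_0$ into the region $|y|\leqslant t^{\alpha}$, where $|\mathrm{e}^{-ix\cdot\xi}-1|\leqslant|\xi|t^\alpha$ produces a gain $t^{\alpha-\frac12}$ with $\alpha<\frac12$, and the region $|y|>t^{\alpha}$, whose contribution $\int_{|y|>t^\alpha}|\Phi_0|\,\mathrm{d}y\to0$. Equivalently, we use $|\widehat{\Phi}_0(\xi)-P_{\Phi_0}|\to0$ uniformly on shrinking balls together with the scaling $\xi=t^{-1/2}\eta$ and dominated convergence. The pieces on $\ml{Z}_{\bdd}\cup\ml{Z}_{\extt}$ decay exponentially for both $\partial_t^j\varphi$ and the Gaussian profile.

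\textbf{Lower bound.} By Minkowski's inequality,
\begin{align*}
\|\partial_t^j\varphi\|_{\dot H^{s+2-j}}\geqslant\|(-\Delta)^jG(t,\cdot)\|_{\dot H^{s+2-j}}|P_{\Phi_0}|-o\big(t^{-\frac{n+2s+4+2j}{4}}\big).
\end{align*}
The first term is computed exactly by scaling, or via Lemma \ref{Lemma-3.1}, as $\approx t^{-\frac{n+2s+4+2j}{4}}$. Hence the lower bound holds for $t\gg1$ when $P_{\Phi_0}\neq0$.

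\textbf{Main obstacle.} The hardest part will be the convergence $\widehat{\Phi}_0\to P_{\Phi_0}$, which holds with only $L^1$ data and no moment assumption and therefore yields only $o(\cdot)$ rather than a rate. The low-regularity bookkeeping of the $\mathrm{e}^{-ct}$ terms near $\xi=0$ when $s+2-j<0$ is a secondary issue.
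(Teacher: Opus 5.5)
Your argument follows the paper's proof step by step. For the upper bound you use Plancherel with the zone cut-offs, \eqref{es1.1.1}, \eqref{es3}, \eqref{es2.1.1} and Lemma \ref{Lemma-3.1}. For the refined estimate you use \eqref{es1.1.2} plus a splitting of the $L^1$ data at $|y|\sim t^{\alpha}$. The paper takes $\alpha=\frac13$ and applies the mean value theorem to $G$ in physical space; you do the equivalent on the Fourier side with $|\mathrm{e}^{-iy\cdot\xi}-1|\leqslant|y||\xi|$. The lower bound is Minkowski plus Lemma \ref{Lemma-3.1}, as in the paper. (The extra $|\xi|^2$ in \eqref{es1.1.2} gains $t^{-1}$, not $t^{-\frac12}$; this is harmless.)

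The step you call delicate bookkeeping is a genuine gap: neither of your remedies works, and it cannot be closed under the stated hypothesis.
\begin{itemize}
\item Sharpening via \eqref{expantion of root2} gains nothing. Only the $\widehat{\varphi}_0$-coefficients of the $\cos$- and $\sin$-terms carry a factor $\lambda_1=O(|\xi|^2)$; the coefficients of $\widehat{\varphi}_1$ and $\widehat{\varphi}_2$ are $O(1)$ at $\xi=0$.
\item Falling back to $L^2$ data needs $s+2-j\geqslant0$, but then $n+2(s+2-j)>0$ holds already, so this case was never problematic.
\end{itemize}

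In fact the claimed bound fails whenever $n+2(s+2-j)\leqslant0<n+2s+4+2j$, which can only happen for $j\in\{1,2\}$. At $\xi=0$ the function $y(t):=\widehat{\varphi}(t,0)$ solves $\tau y'''+y''+\gamma y'=0$. Hence $z:=y'$ solves $\tau z''+z'+\gamma z=0$ with $z(0)=P_{\varphi_1}$ and $z'(0)=P_{\varphi_2}$. If $(P_{\varphi_1},P_{\varphi_2})\neq(0,0)$, then $\partial_t^j\widehat{\varphi}(t,0)=y^{(j)}(t)\neq0$ for all $t$ outside a discrete set. By continuity in $\xi$ (the data lie in $L^1$), it follows that $\int_{|\xi|\leqslant\varepsilon_0}|\xi|^{2(s+2-j)}|\partial_t^j\widehat{\varphi}(t,\xi)|^2\,\mathrm{d}\xi=+\infty$. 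For instance, $n=1$, $j=1$, $s=-2$ satisfies $n+2s+6>0$, yet $\|\partial_t\varphi(t,\cdot)\|_{\dot{H}^{-1}}=+\infty$ for such data.

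The paper's proof passes over the same point. Its bound $\|\chi_{\intt}(\xi)|\xi|^{s+2-j}\mathrm{e}^{-ct}\|_{L^2}\lesssim\mathrm{e}^{-ct}$ tacitly uses $n+2s+4-2j>0$.

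The correct repair is to add the hypothesis $n+2s+4-2j>0$ for $j\in\{1,2\}$, or to assume $P_{\varphi_1}=P_{\varphi_2}=0$. This costs nothing later, because Section \ref{Section-Nonlinear} only uses the indices $\sigma+2-j\in\{0,s+2-j\}$ with $s>0$. Under that hypothesis the $\mathrm{e}^{-ct}$ terms of \eqref{es1.1.1} and \eqref{es1.1.2} are harmless, and your proof is complete.
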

\begin{proof}
By applying the Hausdorff-Young inequality and Lemma \ref{Lemma-3.1} with $n+2s+4+2j>0$ for $j\in\{0,1,2\}$, we may obtain
	\begin{align*}
	\|\partial_t^j\varphi(t,\cdot)\|_{\dot{H}^{s+2-j}}&\lesssim \|\chi_{\intt}(\xi)|\xi|^{s+2-j}\partial_t^j\widehat{\varphi}(t,\xi)\|_{L^2}+\|(1-\chi_{\intt}(\xi))|\xi|^{s+2-j}\partial_t^j\widehat{\varphi}(t,\xi)\|_{L^2}\\
	&\lesssim\big\|\chi_{\intt}(\xi)|\xi|^{s+2-j}\big(|\xi|^{2j}\,\mathrm{e}^{-c|\xi|^2t}+\mathrm{e}^{-ct}\big)\big\|_{L^2}\|(\widehat{\varphi}_0,\widehat{\varphi}_1,\widehat{\varphi}_2)\|_{(L^\infty)^3}\\
	&\quad+\mathrm{e}^{-ct}\|(1-\chi_{\intt}(D))|D|^{s+2-j}(\varphi_0,\varphi_1,\varphi_2)\|_{\dot{H}^{\max\{j-1,0\}}\times \dot{H}^{j-1}\times \dot{H}^{j-2}}\\
	&\lesssim (1+t)^{-\frac{n+2s+4+2j}{4}}\|(\varphi_0,\varphi_1,\varphi_2)\|_{(H^{s+\max\{1,2-j\}}\cap L^1)\times (H^{s+1}\cap L^1)\times (H^s\cap L^1)},
\end{align*}
where we used \eqref{es1.1.1}, \eqref{es3} and \eqref{es2.1.1} in the second inequality of last chain. Analogously,
\begin{align*}
\|\chi_{\extt}(D)|D|^{2j}\ml{G}(t,|D|)\Phi_0(\cdot)\|_{\dot{H}^{s+2-j}}&\lesssim\big\|\chi_{\extt}(\xi)|\xi|^{s+2+j}\,\mathrm{e}^{-c|\xi|^2t}\,\widehat{\Phi}_0(\xi)\big\|_{L^2}\\
&\lesssim \mathrm{e}^{-ct}\|(\varphi_0,\varphi_1,\varphi_2)\|_{H^{s+\max\{1,2-j\}}\times H^{s+1} \times H^s}
\end{align*}
for large time, because
\begin{align*}
\sup\limits_{|\xi|\geqslant N_0}\big(|\xi|^{s_0}\,\mathrm{e}^{-c|\xi|^2t}\big)\lesssim t^{-\frac{s_0}{2}}\,\mathrm{e}^{-\frac{c}{2}N_0^2t}\lesssim \mathrm{e}^{-c_0t}
\end{align*}
holds for any $s_0\in\mb{R}$ and $t\gg1$.
From \eqref{es1.1.2}, we thus arrive at the large time error estimate
\begin{align}\label{p3.2.1}
	&\|\partial_t^j\varphi (t,\cdot)-|D|^{2j}\ml{G}(t,|D|)\Phi_0(\cdot)\|_{\dot{H}^{s+2-j}}\notag\\
	& \lesssim t^{-\frac{n+2s+4+2j}{4}-1}\|(\varphi_0,\varphi_1,\varphi_2)\|_{(H^{s+\max\{1,2-j\}}\cap L^1)\times (H^{s+1}\cap L^1)\times (H^s\cap L^1)}.
\end{align}
Via the Lagrange theorem
\begin{align*}
	\big||D|^{2j}G(t,x-y)-|D|^{2j}G(t,x)\big|\lesssim|y|\,\big| |D|^{2j+1}G(t,x-\theta_1y)\big|\ \ \mbox{with}\ \ \theta_1\in(0,1),
\end{align*}
one notices that
\begin{align}\label{p3.2.2}
	&\|\,|D|^{2j}\ml{G}(t,|D|)\Phi_0(\cdot)-|D|^{2j}G(t,\cdot)P_{\Phi_0}\|_{L^2}\notag\\
	&\lesssim\left\|\int_{|y|\leqslant t^{\frac{1}{3}}}\big(|D|^{2j}G(t,\cdot-y)-|D|^{2j}G(t,\cdot)\big)\Phi_0(y)\,\mathrm{d}y\right\|_{L^2}\notag\\
	&\quad+\left\|\int_{|y|\geqslant t^{\frac{1}{3}}}\big(\,\big||D|^{2j}G(t,\cdot-y)\big|+\big||D|^{2j}G(t,\cdot)\big|\,\big)|\Phi_0(y)|\,\mathrm{d}y\right\|_{L^2}\notag\\
	&\lesssim t^{\frac{1}{3}}\|\,|D|^{2j+1} G(t,\cdot)\|_{L^2}\|\Phi_0\|_{L^1}+\|\,|D|^{2j}G(t,\cdot)\|_{L^2}\|\Phi_0\|_{L^1(|x|\geqslant t^{\frac{1}{3}})}.
\end{align}
Thanks to $\Phi_0\in L^1$ from our assumption so that $\|\Phi_0\|_{L^1(|x|\geqslant t^{\frac{1}{3}})}=o(1)$ as $t\to+\infty$, we in the $\dot{H}^{s+2-j}$ norm are able to deduce
	\begin{align}\label{p3.2.3}
	&\|\,|D|^{2j}\ml{G}(t,|D|)\Phi_0(\cdot)-|D|^{2j}G(t,\cdot)P_{\Phi_0}\|_{\dot{H}^{s+2-j}}\notag\\
	&\lesssim t^{\frac{1}{3}}\|\,|\xi|^{s+3+j}\widehat{\ml{G}}(t,|\xi|)\|_{L^2}\|\Phi_0\|_{L^1}+\|\,|\xi|^{s+2+j}\widehat{\ml{G}}(t,|\xi|)\|_{L^2}\|\Phi_0\|_{L^1(|x|\geqslant t^{\frac{1}{3}})}\notag\\
	&=o\left(t^{-\frac{n+2s+4+2j}{4}}\right)
\end{align}
as $t\gg1$. By combining \eqref{p3.2.2} and \eqref{p3.2.3}, it results the desired refined estimate. For large time $t\gg1$, the Minkowski inequality and Lemma \ref{Lemma-3.1} imply that
\begin{align*}
	\|\partial_t^j\varphi(t,\cdot)\|_{\dot{H}^{s+2-j}}&\gtrsim \|\,|D|^{2j}G(t,\cdot)\|_{\dot{H}^{s+2-j}}|P_{\Phi_0}|-\|\partial_t^j\varphi(t,\cdot)-|D|^{2j}G(t,\cdot)P_{\Phi_0}\|_{\dot{H}^{s+2-j}}\\
	&\gtrsim t^{-\frac{n+2s+4+2j}{4}}|P_{\Phi_0}|- o\left(t^{-\frac{n+2s+4+2j}{4}}\right)\gtrsim t^{-\frac{n+2s+4+2j}{4}}|P_{\Phi_0}|
\end{align*}
provided that $P_{\Phi_0}\neq0$. Our proof is complete.
\end{proof}
\begin{remark}
The next optimal estimate for the additional $L^1$ integrable Cauchy data:
\begin{align*}
\|\partial_t^j\varphi(t,\cdot)\|_{\dot{H}^{s+2-j}}\approx t^{-\frac{n+2s+4+2j}{4}}
\end{align*}
holds for large time $t\gg1$, with $n+2s+4+2j>0$ for $j\in\{0,1, 2\}$, provided that the 0-th moment condition of combined data $\gamma\varphi_0(x)+\varphi_1(x)+\tau\varphi_2(x)$ is non-zero, whose key reason arises from its diffusion profile.
\end{remark}
\begin{remark}
We are going to compare our result in Theorem \ref{Theorem-Linear-Estimates} with those for the following well-studied damped models:
\begin{itemize}
	\item the double damped wave equation $\varphi^{\tau=0}_{tt}-\Delta \varphi^{\tau=0}-\delta\Delta \varphi^{\tau=0}_t+\gamma\varphi^{\tau=0}_t=0$, which is the singular limit case $\tau=0$ in \eqref{Eq-Dissipation-MGT} by setting $\delta>0$ and $\gamma>0$;
	\item the sub-critical MGT equation $\tau\varphi^{\gamma=0}_{ttt}+\varphi^{\gamma=0}_{tt}-\Delta \varphi^{\gamma=0}-(\delta+\tau)\Delta \varphi^{\gamma=0}_t=0$, which is the vanishing weak damping limit case $\gamma=0$ in \eqref{Eq-Dissipation-MGT} by setting $\tau>0$ and $\delta>0$;
\end{itemize}
in the whole space $\mb{R}^n$ via the next table.
\begin{table}[http]
	\centering	
	\begin{tabular}{cccc}
		\toprule
		Model  & Reference & Optimal rate  & Multiplier \\
		\midrule
		  Weakly damped MGT equation& \multirow{2}{*}{Theorem \ref{Theorem-Linear-Estimates}} & \multirow{2}{*}{$t^{-\frac{n}{4}}$} & \multirow{2}{*}{$\frac{1}{\gamma}\,\mathrm{e}^{-\frac{1}{\gamma}|\xi|^2t}$}\\
		\eqref{Eq-Dissipation-MGT} with $\tau>0$, $\delta>0$, $\gamma>0$&&&\\
		\midrule
		  Double damped wave equation& \multirow{2}{*}{\cite{Ikehata-Sawada=2016,Ikehata-Takeda=2017,Ikehata-Michihisa=2019,Dao-Van-Nguyen=2024}} & \multirow{2}{*}{$t^{-\frac{n}{4}}$} & \multirow{2}{*}{$\frac{1}{\gamma}\,\mathrm{e}^{-\frac{1}{\gamma}|\xi|^2t}$}\\
\eqref{Eq-Dissipation-MGT} with $\tau=0$, $\delta>0$, $\gamma>0$&&&\\
		\midrule
		Sub-critical MGT equation  & \multirow{2}{*}{\cite{Chen-Ikehata=2021,Chen-Takeda=2023,Chen-Ma-Qin=2025}} & $\sqrt{t}$ if $n=1$; $\sqrt{\ln t}$ if $n=2$; & \multirow{2}{*}{$\frac{\sin(|\xi|t)}{|\xi|}\,\mathrm{e}^{-\frac{\delta}{2}|\xi|^2t}$}\\
		\eqref{Eq-Dissipation-MGT} with $\tau>0$, $\delta>0$, $\gamma=0$&&$t^{\frac{1}{2}-\frac{n}{4}}$ if $n\geqslant 3$&\\
		\bottomrule
		\multicolumn{4}{l}{$\star$ The optimal estimates and asymptotic profiles for large time are served for the solution itself.}
	\end{tabular}
	\caption{A comparison of sharp large time behavior in the sub-critical case $\delta>0$}
	\label{Table_1}
\end{table}

\noindent We notice from Table \ref{Table_1} that the weak damping term $+\gamma\varphi_t$ plays a crucial role in the sub-critical case $\delta>0$, because the same large time behavior for $\tau\geqslant0$ but the different ones between $\gamma>0$ and $\gamma=0$. This weak damping term influences not only on the optimal rates but also on the asymptotic profiles (see the parameter $\gamma$ in the dominant kernel $G(t,x)$ and the combined data $\Phi_0(x)$). Particularly, the diffusion-wave profile changes into the $\gamma$-dependent Gaussian diffusion profile as $\gamma>0$, that eliminates the large time instability when $n\leqslant 2$, and improves the decay rate when $n\geqslant 3$, as the limit case $\gamma=0$. 
\end{remark}

As a byproduct, we address some suitable $L^2-L^2$ estimates as well as $(L^2\cap L^1)-L^2$ estimates concerning for the third kernel $K_2(t,x)$, in order to complete our nonlinear part later. Comparing with those in Theorem \ref{Theorem-Linear-Estimates}, we estimate the $(j+1)$-order time-derivatives instead of the $j$-order time-derivatives in the $\dot{H}^{s+2-j}$ norm for $j\in\{0,1,2\}$ due to the treatment of nonlinearity $\partial_t\ml{N}(\psi_t,\nabla\psi)$.

\begin{coro}\label{CORO-3.1}
Let $\tau>0$, $\delta>0$, $0<\gamma\neq\frac{1}{4\tau}$, and $n+2s+8+2j>0$ for $j\in\{0,1, 2\}$. Let $f_0\in \dot{H}^{s+1}\cap L^1$. Then, the third kernel of solution satisfies
\begin{align*}
\|\partial_t^{j+1}K_2(t,\cdot)\ast_{(x)}f_0(\cdot)\|_{\dot{H}^{s+2-j}}&\lesssim (1+t)^{-\frac{n+2s+8+2j}{4}}\|f_0\|_{\dot{H}^{\max\{s+1,0\}}\cap L^1},\\
\|\partial_t^{j+1}K_2(t,\cdot)\ast_{(x)}f_0(\cdot)\|_{\dot{H}^{s+2-j}}&\lesssim (1+t)^{-\frac{3+j}{2}}\|f_0\|_{\dot{H}^{\max\{s+1,0\}}}.
\end{align*}
\end{coro}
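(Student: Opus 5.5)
We work in the Fourier space. Since $\widehat{K}_2(t,|\xi|)$ is exactly the solution of \eqref{Eq-Dissipation-GT} with data $(0,0,1)$, the plan is to get pointwise bounds for $\partial_t^{j+1}\widehat{K}_2$ in each zone, then integrate with Plancherel and the Hausdorff--Young inequality. In the zones we use \eqref{es1.1.1}, \eqref{es3} and \eqref{es2.1.1}, with $j$ replaced by $j+1\in\{1,2,3\}$ (these estimates were established for all time-derivatives up to order three). This gives
\begin{align*}
\chi_{\intt}(\xi)|\partial_t^{j+1}\widehat{K}_2|&\lesssim\chi_{\intt}(\xi)\big(|\xi|^{2j+2}\,\mathrm{e}^{-c|\xi|^2t}+\mathrm{e}^{-ct}\big),\\
\chi_{\bdd}(\xi)|\partial_t^{j+1}\widehat{K}_2|&\lesssim\chi_{\bdd}(\xi)\,\mathrm{e}^{-ct},\\
\chi_{\extt}(\xi)|\partial_t^{j+1}\widehat{K}_2|&\lesssim\chi_{\extt}(\xi)\,\mathrm{e}^{-ct}|\xi|^{j-1}.
\end{align*}

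\textbf{First estimate ($L^1$-type).} For small frequencies, bound $|\widehat{f}_0|\le\|f_0\|_{L^1}$. Lemma \ref{Lemma-3.1} applied to $\||\xi|^{s+2-j}|\xi|^{2j+2}\mathrm{e}^{-c|\xi|^2t}\chi_{\intt}\|_{L^2}$, valid because $n+2(s+j+4)>0$, gives $t^{-\frac{n+2s+8+2j}{4}}$. The $\mathrm{e}^{-ct}$ remainder contributes $\mathrm{e}^{-ct}\||\xi|^{s+2-j}\chi_{\intt}\|_{L^2}$, and integrability of this near $0$ is exactly what might fail when $s+2-j$ is very negative. If necessary I would refine the small-frequency bound for $\widehat{K}_2$ itself: from \eqref{expantion of root1} the $\mathrm{e}^{\lambda_{2,3}t}$ parts of $\partial_t^{j+1}\widehat K_2$ carry regular coefficients, so they are bounded by $\mathrm{e}^{-ct}$ without a singular factor. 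Then one only needs $\chi_{\intt}|\xi|^{s+2-j}\in L^2$, which I expect to hold in the relevant range; otherwise I would absorb the remainder using the $|\xi|^{2j+2}$ gain of the $\lambda_1$-part. For the bounded and large zones, $\mathrm{e}^{-ct}\langle\xi\rangle^{s+2-j}|\xi|^{j-1}|\widehat f_0|\lesssim \mathrm{e}^{-ct}|\xi|^{s+1}|\widehat f_0|$ for $|\xi|\ge\varepsilon_0$. This is controlled by $\mathrm{e}^{-ct}\|f_0\|_{\dot H^{s+1}}$, or by $\|f_0\|_{L^2}$ when $s+1<0$ (using the bounded zone and $|\xi|\gtrsim1$), hence the norm $\dot H^{\max\{s+1,0\}}$. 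Combining all zones gives $(1+t)^{-\frac{n+2s+8+2j}{4}}$, using boundedness for $t\le1$.

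\textbf{Second estimate (pure $L^2$).} Here no $L^1$ is available, so for small frequencies we write
\begin{align*}
|\xi|^{s+2-j}|\xi|^{2j+2}\mathrm{e}^{-c|\xi|^2t}|\widehat f_0| = |\xi|^{j+3}\mathrm{e}^{-c|\xi|^2t}\,|\xi|^{s-1}\cdots.
\end{align*}
More precisely, we split it as $\big(|\xi|^{j+3}\mathrm{e}^{-c|\xi|^2t}\big)\big(|\xi|^{s+1}|\widehat f_0|\big)$ when $s+1\ge0$, and use $\sup_\xi|\xi|^{j+3}\mathrm{e}^{-c|\xi|^2t}\lesssim(1+t)^{-\frac{3+j}{2}}$. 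When $s+1<0$ we instead bound by $|\xi|^{s+j+4}\mathrm{e}^{-c|\xi|^2t}|\widehat f_0|$, with $|\xi|^{s+j+4}\lesssim|\xi|^{j+3}$ on $\mathcal Z_{\intt}$, giving $\|f_0\|_{L^2}$. The exponential remainder and the other zones are treated as above.

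\textbf{Main obstacle.} The main difficulty is the exponential remainder in $\mathcal Z_{\intt}$ carrying a possibly non-integrable negative power $|\xi|^{s+2-j}$. The resolution is the observation that the derivatives of the $\mathrm{e}^{\lambda_{2,3}t}$-modes of $\widehat K_2$ have bounded coefficients, so one can trade the needed factor $|\xi|^{2j+2}$ only on the $\lambda_1$-mode and keep the remainder harmless under the hypothesis $n+2s+8+2j>0$.
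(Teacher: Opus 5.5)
Your architecture is the same as the paper's proof. You split into $\ml{Z}_{\intt}$, $\ml{Z}_{\bdd}$, $\ml{Z}_{\extt}$ and apply \eqref{es1.1.1}, \eqref{es3}, \eqref{es2.1.1} with $j+1$ in place of $j$. For the first estimate you pair Lemma \ref{Lemma-3.1} with $\|\widehat{f}_0\|_{L^\infty}\leqslant\|f_0\|_{L^1}$. For the second you pair $\sup_{|\xi|\leqslant\varepsilon_0}|\xi|^{3+j}\mathrm{e}^{-c|\xi|^2t}$ with $\|\,|\xi|^{s+1}\widehat{f}_0\|_{L^2}$. The zones $|\xi|\geqslant\varepsilon_0$, and the $\lambda_1$-part when $s\geqslant-1$, are handled correctly. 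Two steps, however, do not hold up.

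\textbf{The case $s+1<0$ of the second estimate.} You use $|\xi|^{s+j+4}\lesssim|\xi|^{j+3}$ on $\ml{Z}_{\intt}$. Since $|\xi|\leqslant\varepsilon_0<1$, this holds only when $s+j+4\geqslant j+3$, i.e.\ $s\geqslant-1$, so the inequality is reversed exactly where you apply it. This cannot be repaired. For $j=0$, $s=-2$ (admissible), the low-frequency symbol of $\partial_tK_2$ is $\frac{\tau}{\gamma}\lambda_1\mathrm{e}^{\lambda_1t}(1+O(|\xi|^2))+O(\mathrm{e}^{-ct})$ with $\lambda_1=-\frac{1}{\gamma}|\xi|^2+O(|\xi|^4)$. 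Testing with $\widehat{f}_0$ supported near $|\xi|=t^{-1/2}$ shows its $L^2\to L^2$ norm is $\approx t^{-1}$, not $t^{-\frac{3}{2}}$. The paper's own small-frequency computation only yields $\sup_{|\xi|\leqslant\varepsilon_0}(|\xi|^{3+j}\mathrm{e}^{-c|\xi|^2t})\,\|f_0\|_{\dot{H}^{s+1}}$. Passing from $\dot{H}^{s+1}$ to $\dot{H}^{\max\{s+1,0\}}$ is legitimate only on $|\xi|\geqslant\varepsilon_0$. So for $s<-1$ you should keep $\|f_0\|_{\dot{H}^{s+1}}$ rather than aim at $L^2$. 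Where the paper invokes this case, in \eqref{Est-07} with $j=0$, the true rate $(1+t-\eta)^{-1}$ still suffices up to a logarithm.

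\textbf{The exponential remainder.} Your proposed resolution does not work. The bounded coefficients of the $\mathrm{e}^{\lambda_{2,3}t}$-modes are precisely what produce the bare $\mathrm{e}^{-ct}$ in \eqref{es1.1.1}. These modes do not vanish at $\xi=0$: indeed $\partial_t^{j+1}\widehat{K}_2(t,0)=z^{(j)}(t)$, where $\tau z''+z'+\gamma z=0$, $z(0)=0$, $z'(0)=1$. The gain $|\xi|^{2j+2}$ belongs to a different summand, so there is nothing to absorb, and the remainder is genuinely of size $|\xi|^{s+2-j}\mathrm{e}^{-ct}|\widehat{f}_0|$.

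For the first estimate with $P_{f_0}\neq0$, this term requires $n+2(s+2-j)>0$, which does not follow from $n+2s+8+2j>0$. For example, with $n=3$, $j=2$, $s=-2$ the left-hand side is $+\infty$ for Schwartz $f_0$ with nonzero mean. For the second estimate with $j=2$, the remainder is $|\xi|^{s}\mathrm{e}^{-ct}|\widehat{f}_0|$. It is not controlled by $\|f_0\|_{\dot{H}^{\max\{s+1,0\}}}$, because the weight $|\xi|^{s-\max\{s+1,0\}}$ is unbounded near $0$.

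The paper's proof also writes this remainder as a bare $\mathrm{e}^{-ct}$ and passes over the point. It is harmless there only because every application in Section \ref{Section-Nonlinear} has $s+2-j\geqslant0$ and uses the second estimate only for $j\leqslant1$. The correct way to complete your argument is to prove the corollary under those extra restrictions. For $s<-1$, state the second estimate with $\|f_0\|_{\dot{H}^{s+1}}$. Do not assert that the remainder is harmless under $n+2s+8+2j>0$ alone.
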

\begin{proof}
Their demonstrations are analogous to those in Theorem \ref{Theorem-Linear-Estimates}, particularly, the estimate for large frequencies as follows:
\begin{align*}
\|(1-\chi_{\intt}(D))\partial_t^{j+1}K_2(t,\cdot)\ast_{(x)}f_0(\cdot)\|_{\dot{H}^{s+2-j}}&\lesssim \|(1-\chi_{\intt}(\xi))|\xi|^{s+2-j}\partial_t^{j+1}\widehat{K}_2(t,|\xi|)\widehat{f}_0(\xi)\|_{L^2}\\
&\lesssim \mathrm{e}^{-ct}\|f_0\|_{\dot{H}^{\max\{s+1,0\}}}.
\end{align*}
Concerning the estimate for small frequencies, we separate into two situations.
\begin{itemize}
	\item If $f_0\in L^1$ additionally when $n+2s+8+2j>0$, then
	\begin{align*}
	\|\chi_{\intt}(D)\partial_t^{j+1}K_2(t,\cdot)\ast_{(x)}f_0(\cdot)\|_{\dot{H}^{s+2-j}}&\lesssim\big\|\chi_{\intt}(\xi)|\xi|^{s+2-j}\big(|\xi|^{2j+2}\,\mathrm{e}^{-c|\xi|^2t}+\mathrm{e}^{-ct}\big)\big\|_{L^2}\|\widehat{f}_0\|_{L^{\infty}}\\
	&\lesssim  (1+t)^{-\frac{n+2s+8+2j}{4}}\|f_0\|_{L^1}.
	\end{align*}
	\item If $f_0\in \dot{H}^{s+1}$ only, then
	\begin{align*}
	\|\chi_{\intt}(D)\partial_t^{j+1}K_2(t,\cdot)\ast_{(x)}f_0(\cdot)\|_{\dot{H}^{s+2-j}}&\lesssim\left(\,\sup\limits_{|\xi|\leqslant \varepsilon_0}\big(|\xi|^{3+j}\,\mathrm{e}^{-c|\xi|^2t}\big)+\mathrm{e}^{-ct}\right)\|\,|\xi|^{s+1}\widehat{f}_0\|_{L^2}\\
	&\lesssim (1+t)^{-\frac{3+j}{2}}\|f_0\|_{\dot{H}^{s+1}}.
	\end{align*}
\end{itemize}
Our proof is finished after summarizing the last estimates.
\end{proof}

\subsubsection{The critical case $\delta=0$}
\hspace{5mm}As a matter of fact for the weakly damped MGT equation \eqref{Eq-Dissipation-MGT} in the critical case $\delta=0$, by using the equation \eqref{Eq-Dissipation-MGT} with $\delta=0$ and the integration by parts associated with its strict hyperbolicity, the following dissipation of suitable energy:
\begin{align*}
&\frac{\mathrm{d}}{\mathrm{d}t}\left(\|\tau\varphi_{tt}(t,\cdot)+\varphi_t(t,\cdot)\|_{L^2}^2+\|\tau\nabla\varphi_t(t,\cdot)+\nabla\varphi(t,\cdot)\|_{L^2}^2+\gamma\tau\|\varphi_t(t,\cdot)\|_{L^2}^2\right)\\
&=2\int_{\mb{R}^n}\big(\tau\varphi_{ttt}+\varphi_{tt})(\tau\varphi_{tt}+\varphi_t)+(\tau\nabla\varphi_{tt}+\nabla\varphi_t)\cdot(\tau\nabla\varphi_t+\nabla\varphi)+\gamma\tau\varphi_{tt}\varphi_t\big)\,\mathrm{d}x\\
&=-2\gamma\|\varphi_t(t,\cdot)\|_{L^2}^2<0,
\end{align*}
holds for any $\gamma>0$, which is quite different from the classical situation $\gamma=0$ (the conservation of suitable energy, see \cite[Proposition 1]{Chen-Palmieri=2020}). For this reason, it is interesting to study damping phenomena, especially, the optimal $(L^2\cap L^1)-L^2$ estimate and the asymptotic profile as large time, for the linear problem \eqref{Eq-Dissipation-MGT} in the critical case $\delta=0$. Beforehand, let us denote the Fourier multipliers
\begin{align*}
\ml{H}_{\mathrm{s}}(t,|D|):=\frac{\sin(|D|t)}{|D|}\ \ \mbox{and}\ \ \ml{H}_{\mathrm{c}}(t,|D|):=\frac{\cos(|D|t)}{|D|^2}
\end{align*}
via their symbols. Moreover, we introduce another combined data $\Phi_1(x):=\frac{1}{\tau^2}\varphi_0(x)+\varphi_1(x)$ for the sake of simplicity.

\begin{theorem}\label{Theorem-Linear-Estimates-2}
	Let $\tau>0$, $\delta=0$, $0<\gamma\neq\frac{1}{4\tau}$, and $n+2s+4+2j>0$ for $j\in\{0,1, 2\}$. Let $(\varphi_0,\varphi_1,\varphi_2)\in (H^{s+\ell+2}\cap L^1)\times (H^{s+\ell+1}\cap L^1)\times (H^{s+\ell}\cap L^1)$ with $\ell\geqslant 0$. Then, the Sobolev solution to the weakly damped MGT equation \eqref{Eq-Dissipation-MGT} in the critical case satisfies
	\begin{align*}
		\|\partial_t^j\varphi(t,\cdot)\|_{\dot{H}^{s+2-j}}\lesssim (1+t)^{-\min\left\{\frac{n+2s+4+2j}{4},\frac{\ell}{2}\right\}}\|(\varphi_0,\varphi_1,\varphi_2)\|_{(H^{s+\ell+\max\{1,2-j\}}\cap L^1)\times (H^{s+\ell+1}\cap L^1)\times (H^{s+\ell}\cap L^1)}.
	\end{align*}
	Additionally, by assuming the higher regularity of loss $\ell>\frac{n}{2}+s+2+j$ and $P_{\Phi_0}\neq0$, the optimal estimate
	\begin{align*}
	\|\partial_t^j\varphi(t,\cdot)\|_{\dot{H}^{s+2-j}}\approx t^{-\frac{n+2s+4+2j}{4}}
	\end{align*}
	holds for large time $t\gg1$. Furthermore, the refined estimate
	\begin{align*}
	&\big\|\partial_t^j\varphi(t,\cdot)-\chi_{\intt}(D)|D|^{2j}\ml{G}(t,|D|)\Phi_0(\cdot)\\
	&\qquad\quad\,\ \ -\chi_{\extt}(D)\,\mathrm{e}^{-\frac{\gamma}{2\tau^2}|D|^{-2}t}\partial_t^j\big[\ml{H}_{\mathrm{s}}(t,|D|)\Phi_1(\cdot)-\ml{H}_{\mathrm{c}}(t,|D|)\varphi_2(\cdot)\big]\big\|_{\dot{H}^{s+2-j}}\\
	&\lesssim (1+t)^{-\min\left\{\frac{n+2s+4+2j}{4}+1,\frac{\ell}{2}+\frac{1}{2}\right\}}\|(\varphi_0,\varphi_1,\varphi_2)\|_{(H^{s+\ell+\max\{1,2-j\}}\cap L^1)\times (H^{s+\ell+1}\cap L^1)\times (H^{s+\ell}\cap L^1)}
	\end{align*}
	holds for any $\ell\geqslant 0$.
\end{theorem}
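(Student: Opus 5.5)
The proof runs along the lines of Theorem \ref{Theorem-Linear-Estimates}, with one change: the large-frequency part is no longer exponentially decaying but regularity-losing. By Plancherel, I will split $\|\partial_t^j\varphi(t,\cdot)\|_{\dot{H}^{s+2-j}}$ with the cut-offs $\chi_{\intt},\chi_{\bdd},\chi_{\extt}$.

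\textbf{Small and bounded frequencies.} For the small-frequency part, the pointwise bound \eqref{es1.1.1} is still valid for $\delta=0$, since Cases 1.1 and 1.2 did not use $\delta$. Together with Hausdorff--Young ($\|\widehat{\varphi}_k\|_{L^\infty}\leqslant\|\varphi_k\|_{L^1}$) and Lemma \ref{Lemma-3.1} under $n+2s+4+2j>0$, this gives the rate $(1+t)^{-\frac{n+2s+4+2j}{4}}$. The bounded part is $\mathrm{e}^{-ct}$ by \eqref{es3}.

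\textbf{Large frequencies.} I will use \eqref{es2.2.1}. After pulling out the weights, $\chi_{\extt}|\xi|^{s+2-j}|\partial_t^j\widehat{\varphi}|$ is bounded by $\mathrm{e}^{-c|\xi|^{-2}t}$ times $|\xi|^{s+\max\{1,2-j\}}|\widehat{\varphi}_0|+|\xi|^{s+1}|\widehat{\varphi}_1|+|\xi|^{s}|\widehat{\varphi}_2|$. Lemma \ref{Lemma-3.2} then yields $(1+t)^{-\ell/2}$ at the cost of $\ell$ extra derivatives. Taking the minimum of the two rates gives the upper bound.

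\textbf{Refined estimate.} In the small zone I will use \eqref{es1.1.2}. The extra factor $|\xi|^2$ gives rate $t^{-\frac{n+2s+4+2j}{4}-1}$. In the large zone I will use \eqref{es2.2.2}. The subtracted leading term there is exactly the symbol of $\mathrm{e}^{-\frac{\gamma}{2\tau^2}|D|^{-2}t}\partial_t^j[\ml{H}_{\mathrm{s}}\Phi_1-\ml{H}_{\mathrm{c}}\varphi_2]$, and the remainder carries one extra $|\xi|^{-1}$. Applying Lemma \ref{Lemma-3.2} with $\ell+1$ in place of $\ell$ to data in $H^{s+\ell}$-type spaces, i.e. $\|\chi_{\extt}|\xi|^{s'-1}\mathrm{e}^{-c|\xi|^{-2}t}\widehat f\|_{L^2}\lesssim(1+t)^{-\frac{\ell+1}{2}}\|f\|_{H^{s'+\ell}}$, produces $(1+t)^{-\frac{\ell+1}{2}}$.

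The bounded zone, and the parts of the leading terms that are cut off differently, need some care. The bounded-frequency piece of $\partial_t^j\varphi$ is exponentially small. I also have to check that no bounded-zone contribution is needed in the profile: the refined estimate subtracts only the $\chi_{\intt}$ and $\chi_{\extt}$ pieces, and what remains, $\chi_{\bdd}\partial_t^j\widehat\varphi$, decays like $\mathrm{e}^{-ct}$.

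\textbf{Optimal two-sided estimate.} This is the main point, and I expect it to be the main obstacle. Assume $\ell>\frac n2+s+2+j$, so that $\frac{\ell}{2}>\frac{n+2s+4+2j}{4}$. The upper bound then has the diffusion rate. For the lower bound, I would take
\[
\|\partial_t^j\varphi\|_{\dot H^{s+2-j}}\geqslant\|\chi_{\intt}(D)\partial_t^j\varphi\|_{\dot H^{s+2-j}}.
\]
I then compare $\chi_{\intt}(D)\partial_t^j\varphi$ with $\chi_{\intt}(D)|D|^{2j}G(t,\cdot)P_{\Phi_0}$ via \eqref{es1.1.2} together with the $o(\cdot)$ argument of \eqref{p3.2.2}--\eqref{p3.2.3}; the Lagrange-theorem splitting $|y|\lessgtr t^{1/3}$ carries over verbatim. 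The one point to verify is that the cut-off $\chi_{\intt}$ does not spoil the Gaussian lower bound of Lemma \ref{Lemma-3.1}. It does not: that lemma is already stated with $\chi_{\intt}$, and the Gaussian mass concentrates at $|\xi|\lesssim t^{-1/2}\ll\varepsilon_0$. Hence, for $t\gg1$,
\[
\|\chi_{\intt}(D)\partial_t^j\varphi\|_{\dot H^{s+2-j}}\gtrsim t^{-\frac{n+2s+4+2j}{4}}|P_{\Phi_0}|-o\big(t^{-\frac{n+2s+4+2j}{4}}\big),
\]
which gives the optimal estimate $\approx t^{-\frac{n+2s+4+2j}{4}}$.
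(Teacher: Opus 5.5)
Your proposal is correct and follows the paper's proof. It uses the same zone splitting, with \eqref{es1.1.1}/\eqref{es1.1.2} and Lemma \ref{Lemma-3.1} for small frequencies, \eqref{es2.2.1}/\eqref{es2.2.2} and Lemma \ref{Lemma-3.2} for large frequencies (with $\ell+1$ in place of $\ell$ for the refined estimate), and an exponentially small bounded zone.

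The one deviation is in the lower bound. You first keep only $\chi_{\intt}(D)\partial_t^j\varphi$, which is legitimate for a cut-off with $0\leqslant\chi_{\intt}\leqslant 1$. The paper instead compares the whole solution with $|D|^{2j}\ml{G}(t,|D|)\Phi_0$ via \eqref{Ineq-01}, which needs $\ell>\frac{n}{2}+s+2+j$ to make the large-frequency part $o\big(t^{-\frac{n+2s+4+2j}{4}}\big)$. Your route therefore shows that the lower bound itself needs no regularity-loss assumption; in your version that assumption is used only to make the upper bound match.
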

\begin{proof}
The estimate for small frequencies is the same as the one in Theorem \ref{Theorem-Linear-Estimates} (so that $n+2s+4+2j>0$ for $j\in\{0,1, 2\}$ is needed), hence,
\begin{align*}
\|\partial_t^j\varphi(t,\cdot)\|_{\dot{H}^{s+2-j}}&\lesssim (1+t)^{-\frac{n+2s+4+2j}{4}}\|(\varphi_0,\varphi_1,\varphi_2)\|_{(L^1)^3}\\
&\quad+(1+t)^{-\frac{\ell}{2}}\|(\varphi_0,\varphi_1,\varphi_2)\|_{H^{s+\ell+\max\{1,2-j\}}\times H^{s+\ell+1}\times H^{s+\ell}},
\end{align*}
in which we used Lemma \ref{Lemma-3.2} for large frequencies combined with \eqref{es2.2.1}. Let us next conclude its optimality by setting the higher regularity of loss $2\ell>n+2s+4+2j$ so that
\begin{align*}
	\|\partial_t^j\varphi(t,\cdot)\|_{\dot{H}^{s+2-j}}\lesssim (1+t)^{-\frac{n+2s+4+2j}{4}}\|(\varphi_0,\varphi_1,\varphi_2)\|_{(H^{s+\ell+\max\{1,2-j\}}\cap L^1)\times (H^{s+\ell+1}\cap L^1)\times (H^{s+\ell}\cap L^1)}.
\end{align*}
Replacing \eqref{p3.2.1}, we arrive at another error estimate
\begin{align}
&\|\partial_t^j\varphi(t,\cdot)-|D|^{2j}\ml{G}(t,|D|)\Phi_0(\cdot)\|_{\dot{H}^{s+2-j}}\notag\\
&\lesssim t^{-\min\left\{\frac{n+2s+4+2j}{4}+1,\frac{\ell}{2}\right\}}\|(\varphi_0,\varphi_1,\varphi_2)\|_{(H^{s+\ell+\max\{1,2-j\}}\cap L^1)\times (H^{s+\ell+1}\cap L^1)\times (H^{s+\ell}\cap L^1)}\notag\\
&=o\big(t^{-\frac{n+2s+4+2j}{4}}\big)\label{Ineq-01}
\end{align}
for large time $t\gg1$, thanks to the strict restriction of $\ell$. Via \eqref{p3.2.3} as well as the triangle inequality, we immediately complete its sharp lower bound estimate. Finally, applying \eqref{es1.1.2} and \eqref{es2.2.2}, we get
\begin{align*}
&\big\|\partial_t^j\widehat{\varphi}(t,\xi)-\chi_{\intt}(\xi)|\xi|^{2j}\widehat{\ml{G}}(t,|\xi|)\widehat{\Phi}_0(\xi)-\chi_{\extt}(\xi)\,\mathrm{e}^{-\frac{\gamma}{2\tau^2}|\xi|^{-2}t}\partial_t^j\big[\widehat{\ml{H}}_{\mathrm{s}}(t,|\xi|)\widehat{\Phi}_1(\xi)-\widehat{\ml{H}}_{\mathrm{c}}(t,|\xi|)\widehat{\varphi}_2(\xi)\big]\big\|_{L^{2,s+2-j}}\\
&\lesssim (1+t)^{-\frac{n+2s+4+2j}{4}-1}\|(\varphi_0,\varphi_1,\varphi_2)\|_{(L^1)^3}+(1+t)^{-\frac{\ell+1}{2}}\|(\varphi_0,\varphi_1,\varphi_2)\|_{H^{s+\ell+\max\{1,2-j\}}\times H^{s+\ell+1}\times H^{s+\ell}}
\end{align*}
to end our proof.
\end{proof}

\begin{remark}
By subtracting the next function:
\begin{align*}
\chi_{\intt}(D)|D|^{2j}\ml{G}(t,|D|)\Phi_0(\cdot) +\chi_{\extt}(D)\,\mathrm{e}^{-\frac{\gamma}{2\tau^2}|D|^{-2}t}\partial_t^j\big[\ml{H}_{\mathrm{s}}(t,|D|)\Phi_1(\cdot)-\ml{H}_{\mathrm{c}}(t,|D|)\varphi_2(\cdot)\big]
\end{align*}
in the $L^2$ framework, the optimal decay rate of $\|\partial_t^j\varphi(t,\cdot)\|_{\dot{H}^{s+2-j}}$ has been improved at least $(1+t)^{-\frac{1}{2}}$. Consequently, it can be explained by the large time asymptotic profile. Note that $-\partial_t\ml{H}_{\mathrm{c}}(t,|D|)=\ml{H}_{\mathrm{s}}(t,|D|)$ which is the Fourier multiplier for the free wave equation. It shows the acoustic wave influence from this profile.
\end{remark}

\begin{remark}
	We are going to compare our result in Theorem \ref{Theorem-Linear-Estimates-2} with those for the following well-studied evolution models:
	\begin{itemize}
		\item the damped wave equation $\varphi^{\tau=0}_{tt}-\Delta \varphi^{\tau=0}+\gamma\varphi^{\tau=0}_t=0$, which is the singular limit case $\tau=0$ in \eqref{Eq-Dissipation-MGT} by setting $\delta=0$ and $\gamma>0$;
		\item the critical MGT equation $\tau\varphi^{\gamma=0}_{ttt}+\varphi^{\gamma=0}_{tt}-\Delta \varphi^{\gamma=0}-\tau\Delta \varphi^{\gamma=0}_t=0$, which is the vanishing weak damping limit case $\gamma=0$ in \eqref{Eq-Dissipation-MGT} by setting $\tau>0$ and $\delta=0$;
	\end{itemize}
	in the whole space $\mb{R}^n$ via the next table.
	\begin{table}[http]
		\centering	
		\begin{tabular}{cccc}
			\toprule
			Model  & Reference & Optimal rate  & Multiplier \\
			\midrule
			Weakly damped MGT equation& \multirow{2}{*}{Theorem \ref{Theorem-Linear-Estimates-2}} & $t^{-\min\left\{\frac{n}{4},\frac{\ell}{2}\right\}}=t^{-\frac{n}{4}}$ & \multirow{2}{*}{$\frac{1}{\gamma}\,\mathrm{e}^{-\frac{1}{\gamma}|\xi|^2t}$ and  $\mathrm{e}^{-\frac{\gamma}{2\tau^2}|\xi|^{-2}t}$}\\
			\eqref{Eq-Dissipation-MGT} with $\tau>0$, $\delta=0$, $\gamma>0$&&regularity-loss $\ell> \frac{n}{2}$&\\
			\midrule
			Damped wave equation& \multirow{2}{*}{\cite{Chill-Haraux=2003,Takeda=2015}} & \multirow{2}{*}{$t^{-\frac{n}{4}}$} & \multirow{2}{*}{$\frac{1}{\gamma}\,\mathrm{e}^{-\frac{1}{\gamma}|\xi|^2t}$}\\
			\eqref{Eq-Dissipation-MGT} with $\tau=0$, $\delta=0$, $\gamma>0$&&&\\
			\midrule
			Critical MGT equation  & \multirow{2}{*}{\cite{Chen-Palmieri=2020,Chen-Ikehata=2026,Takeda=2026}} & $\sqrt{t}$ if $n=1$; $\sqrt{\ln t}$ if $n=2$; & \multirow{2}{*}{$\frac{\sin(|\xi|t)}{|\xi|}$}\\
			\eqref{Eq-Dissipation-MGT} with $\tau>0$, $\delta=0$, $\gamma=0$&&$\approx C$ if $n\geqslant 3$&\\
			\bottomrule
			\multicolumn{4}{l}{$\star$ The optimal estimates and asymptotic profiles for large time are served for the solution itself.}
		\end{tabular}
		\caption{A comparison of sharp large time behavior in the critical case $\delta=0$}
		\label{Table_2}
	\end{table}
	
	\noindent We notice from Table \ref{Table_2} a new phenomenon that the weak damping term $+\gamma\varphi_t$ brings the regularity-loss decay property, which is caused by the Fourier multiplier $\ml{F}^{-1}_{\xi\to x}\left(\chi_{\extt}(\xi)\,\mathrm{e}^{-\frac{\gamma}{2\tau^2}|\xi|^{-2}t}\right)$ with $\tau>0$ and $\gamma>0$. The weak damping term eliminates the large time instability for $n\leqslant 2$ and the boundedness for $n\geqslant 3$ in the critical MGT equation. However, in the singular limit case $\tau=0$, the regularity-loss effect is dropped again. 
\end{remark}
 
\subsubsection{A comparison between the sub-critical and critical cases}	
\hspace{5mm}To end this part, let us address the similarity and difference between the sub-critical case $\delta>0$ and the critical case $\delta=0$ in the weakly damped MGT equation \eqref{Eq-Dissipation-MGT}. This comparison arises from Theorem \ref{Theorem-Linear-Estimates} and Theorem \ref{Theorem-Linear-Estimates-2}.
\begin{description}
	\item[Similarity:] They have the same optimal decay rate $(1+t)^{-\frac{n+2s+4+2j}{4}}$ if $P_{\Phi_0}\neq0$ due to the identical dominant multiplier $\frac{1}{\gamma}\,\mathrm{e}^{-\frac{1}{\gamma}|\xi|^2t}$ for small frequencies. This optimal decay rate heavily relies on the weak damping term $+\gamma\varphi_t$.
	\item[Difference:] The critical case requires the higher $\ell$-order Sobolev regularity with $\ell>\frac{n}{2}+s+2+j$ for all initial data to achieve the optimality due to its dominant multiplier $\mathrm{e}^{-\frac{\gamma}{2\tau^2}|\xi|^{-2}t}$ for large frequencies. This regularity-loss effect heavily relies on the viscous damping term $-\delta\Delta\varphi_t$.
\end{description}
Motivated by this comparison for the weakly damped MGT equation \eqref{Eq-Dissipation-MGT}, the solution $\varphi^{\delta>0}=\varphi^{\delta>0}(t,x)$ in the sub-critical case converges to the one $\varphi^{\delta=0}=\varphi^{\delta=0}(t,x)$ in the critical case for large time $t\gg1$ in the time-weighted Sobolev space. 

\begin{coro}\label{Coro-Convergence}
	Let $\tau>0$, $\delta\geqslant 0$, $0<\gamma\neq\frac{1}{4\tau}$, and $n+2s+4+2j>0$ for $j\in\{0,1, 2\}$. Let $(\varphi_0,\varphi_1,\varphi_2)\in (H^{s+\ell+2}\cap L^1)\times (H^{s+\ell+1}\cap L^1)\times (H^{s+\ell}\cap L^1)$ with $\ell>\frac{n}{2}+s+2+j$. Then, the Sobolev solutions to the weakly damped MGT equation \eqref{Eq-Dissipation-MGT} satisfy
	\begin{align*}
		\lim\limits_{t\to+\infty}t^{\frac{n+2s+4+2j}{4}}\|\partial_t^j\varphi^{\delta>0}(t,\cdot)-\partial_t^j\varphi^{\delta=0}(t,\cdot)\|_{\dot{H}^{s+2-j}}=0.
	\end{align*}
\end{coro}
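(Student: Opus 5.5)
The plan is to subtract a common diffusion profile from both solutions and use the triangle inequality. For both $\delta>0$ and $\delta=0$, the leading term at small frequencies is the same multiplier $|\xi|^{2j}\widehat{\ml{G}}(t,|\xi|)\widehat{\Phi}_0$. This multiplier does not depend on $\delta$, because the expansions in Cases 1.1 and 1.2 give $\lambda_1=-\frac{1}{\gamma}|\xi|^2+O(|\xi|^4)$ with $\delta$ entering only at higher order. The combined data $\Phi_0=\gamma\varphi_0+\varphi_1+\tau\varphi_2$ is also $\delta$-independent. We then write
\begin{align*}
\|\partial_t^j\varphi^{\delta>0}(t,\cdot)-\partial_t^j\varphi^{\delta=0}(t,\cdot)\|_{\dot{H}^{s+2-j}}&\leqslant\|\partial_t^j\varphi^{\delta>0}(t,\cdot)-|D|^{2j}\ml{G}(t,|D|)\Phi_0(\cdot)\|_{\dot{H}^{s+2-j}}\\
&\quad+\|\partial_t^j\varphi^{\delta=0}(t,\cdot)-|D|^{2j}\ml{G}(t,|D|)\Phi_0(\cdot)\|_{\dot{H}^{s+2-j}},
\end{align*}
and show that each term on the right is $o(t^{-\frac{n+2s+4+2j}{4}})$.

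For the sub-critical term, we invoke the error estimate \eqref{p3.2.1} from the proof of Theorem \ref{Theorem-Linear-Estimates}. It gives the rate $t^{-\frac{n+2s+4+2j}{4}-1}$ under the data norm $(H^{s+\max\{1,2-j\}}\cap L^1)\times(H^{s+1}\cap L^1)\times(H^s\cap L^1)$, and our data lie in this space since $\ell\geqslant0$. For completeness, this estimate combines three ingredients:
\begin{itemize}
\item \eqref{es1.1.2} on $\ml{Z}_{\intt}(\varepsilon_0)$ together with Lemma \ref{Lemma-3.1};
\item \eqref{es2.1.1} and \eqref{es3} off $\ml{Z}_{\intt}(\varepsilon_0)$, which give $\mathrm{e}^{-ct}$;
\item the exponential smallness of $\chi_{\extt}(\xi)|\xi|^{s+2+j}\mathrm{e}^{-c|\xi|^2t}\widehat{\Phi}_0$ for $t\gg1$, which is needed because the profile is not cut off.
\end{itemize}

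For the critical term, we use \eqref{Ineq-01} from the proof of Theorem \ref{Theorem-Linear-Estimates-2}. It gives the bound $t^{-\min\{\frac{n+2s+4+2j}{4}+1,\frac{\ell}{2}\}}$ under the $\ell$-regularity norm of the data. The hypothesis $\ell>\frac{n}{2}+s+2+j$ is precisely $\frac{\ell}{2}>\frac{n+2s+4+2j}{4}$, so this term is also $o(t^{-\frac{n+2s+4+2j}{4}})$.

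Multiplying by $t^{\frac{n+2s+4+2j}{4}}$ and letting $t\to+\infty$ then gives the limit $0$. We never need to replace $|D|^{2j}\ml{G}(t,|D|)\Phi_0$ by $|D|^{2j}G(t,\cdot)P_{\Phi_0}$, since this intermediate profile cancels exactly between the two solutions. In particular, neither \eqref{p3.2.3} nor the condition $P_{\Phi_0}\neq0$ is required.

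The main obstacle is the large-frequency part of the critical solution. There, \eqref{es2.2.1} only gives decay $\mathrm{e}^{-c|\xi|^{-2}t}$, so we must check that Lemma \ref{Lemma-3.2} applies with the right exponents. Concretely, we apply it with $s$ replaced by $s+2-j+\max\{j-1,0\}$, by $s+1$, and by $s$ for $\varphi_0,\varphi_1,\varphi_2$ respectively. This yields $(1+t)^{-\frac{\ell}{2}}$ times the norms $\|\varphi_0\|_{H^{s+\ell+\max\{1,2-j\}}}$, $\|\varphi_1\|_{H^{s+\ell+1}}$ and $\|\varphi_2\|_{H^{s+\ell}}$, all of which are finite by the assumption on the data.

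The small-frequency parts of the two solutions share the same $\delta$-independent bound in \eqref{es1.1.2}, so no further work is needed there. The bounded zone decays exponentially by \eqref{es3} for both values of $\delta$.
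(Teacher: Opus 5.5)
Your proposal is correct and follows the paper's own argument. The paper likewise puts the $\delta$-independent intermediary $|D|^{2j}\ml{G}(t,|D|)\Phi_0$ into the triangle inequality, uses \eqref{p3.2.1} for $\delta>0$ and \eqref{Ineq-01} for $\delta=0$ (where $\ell>\frac{n}{2}+s+2+j$ makes the latter $o\big(t^{-\frac{n+2s+4+2j}{4}}\big)$), and, as you note, needs neither \eqref{p3.2.3} nor $P_{\Phi_0}\neq0$.

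One caveat, inherited from the paper rather than introduced by you: the $\mathrm{e}^{-ct}$ remainder in \eqref{es1.1.2} carries no power of $|\xi|$. For $j\geqslant1$ the error equals $\partial_t^j\widehat{\varphi}(t,0)$ at $\xi=0$, which is generically nonzero, so both terms in your triangle inequality are finite only when $n+2(s+2-j)>0$, a condition strictly stronger than $n+2s+4+2j>0$ once $j\geqslant1$.
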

\begin{proof}
Its justification is based on the intermediary $|D|^{2j}\ml{G}(t,|D|)\Phi_0(\cdot)$ in the triangle inequality associated with \eqref{p3.2.1} and \eqref{Ineq-01}. We omit its detail.
\end{proof}


\section{Weakly damped JMGT equation in the sub-critical case}\setcounter{equation}{0}\label{Section-Nonlinear}
\subsection{Preliminary results}\label{Sub-Section-Pre-Nonlinear}
\hspace{5mm}Strongly motivated by Proposition \ref{PROP-3.1}, let us introduce the following time-weighted Sobolev space for the solution:
\begin{align*}
	X_s(T):=\ml{C}([0,T],H^{s+2})\cap \ml{C}^1([0,T],H^{s+1})\cap \ml{C}^2([0,T],H^s)
\end{align*}
for any $T>0$, equipping its corresponding norm
\begin{align*}
	\|\psi\|_{X_s(T)}:=\sup\limits_{t\in[0,T]}\left(\,\sum\limits_{\sigma\in\{j-2,s\}}\sum\limits_{j\in\{0,1,2\}}(1+t)^{\frac{n+2\sigma+4+2j}{4}}\|\partial_t^j\psi(t,\cdot)\|_{\dot{H}^{\sigma+2-j}} \right),
\end{align*}
where we restricted $s>\max\{\frac{n}{2}-1,0\}$. The above time-dependent weights are the reciprocal functions of decay rates from the derived $(L^2\cap L^1)-L^2$ estimates in Theorem \ref{Theorem-Linear-Estimates} for its corresponding linearized Cauchy problem.

\subsubsection{Representations and fixed point argument}
\hspace{5mm}
According to Duhamel's principle, we as usual introduce the integral operator 
\begin{align}\label{A5}
	N:\ \psi\in X_s(T)\to N\psi:=\psi^{\lin}+\psi^{\non},
\end{align}
in which $\psi^{\lin}=\psi^{\lin}(t,x)$ solves the linearized model \eqref{Eq-Dissipation-MGT}, moreover, the function $\psi^{\non}=\psi^{\non}(t,x)$ is defined by
\begin{align*}
	\psi^{\non}(t,x):=\int_0^tK_2(t-\eta,x)\ast_{(x)}\partial_t\ml{N}(\psi_t,\nabla\psi)(\eta,x)\,\mathrm{d}\eta.
\end{align*}
Here, $K_2(t,x)$ denotes the kernel for the third (last) initial data in the linearized problem \eqref{Eq-Dissipation-MGT}, whose representation in the Fourier space is determined according to Section \ref{Sub-Section-Fourier-space}. Namely, this is the fundamental solution to the linear Cauchy problem \eqref{Eq-Dissipation-MGT} with the initial data $\varphi_0=\varphi_1=0$ and $\varphi_2=\delta_0$ with  the Dirac distribution $\delta_0$ at $x=0$. From the facts that
\begin{align*}
\widehat{K}_2(0,|\xi|)=\partial_t\widehat{K}_2(0,|\xi|)=0\ \ \mbox{and} \ \ \partial_t^2\widehat{K}_2(0,|\xi|)=1,
\end{align*}
 by using an integration by parts with respect to the time variable, we may arrive at  the crucial representation of $\partial_t^j\psi^{\non}$ via
\begin{align}
	\partial_t^j\psi^{\non}(t,x)&=-\partial_t^j K_2(t,x)\ast_{(x)}\ml{N}(\psi_1,\nabla\psi_0)(x)+\int_0^t\partial_t^{j+1}K_2(t-\eta,x)\ast_{(x)}\ml{N}(\psi_t,\nabla\psi)(\eta,x)\,\mathrm{d}\eta\label{Rep-non}
\end{align}
for $j\in\{0,1\}$.  The second-order derivative is reformulated via
\begin{align}
	\partial_t^2\psi^{\non}(t,x)&= \ml{N}(\psi_t,\nabla\psi)(t,x) - \partial_t^2 K_2(t,x)\ast_{(x)}\ml{N}(\psi_1,\nabla\psi_0)(x)\notag\\
	&\quad+\int_0^t\partial_t^{3}K_2(t-\eta,x)\ast_{(x)}\ml{N}(\psi_t,\nabla\psi)(\eta,x)\,\mathrm{d}\eta,\label{Cn} 
\end{align}
in which the additional nonlinear term $\ml{N}(\psi_t,\nabla\psi)$ appears different from $j\in\{0,1\}$. This new term actually will bring some technical difficulties when we study global in-time properties of solution if $n=1$, whose reason is postponed for elaboration. Soon afterwards, another formula of $\partial_t^2\psi^{\non}$ will be given in \eqref{Rep-non-2}.

We will demonstrate the global in-time existence and uniqueness of  small data Sobolev solution to the nonlinear problem \eqref{Eq-Dissipation-JMGT} by proving the existence of unique fixed point of $N$ in the space $X_s(+\infty)$. For this reason, we need to justify the following inequalities uniformly with respect to $T>0$ (meaning that all unexpressed multiplicative constants are independent of $T$):
\begin{align}
	\|N\psi\|_{X_s(T)}&\lesssim \|(\psi_0,\psi_1,\psi_2)\|_{\ml{A}_s}+\|\psi\|_{X_s(T)}^2,\label{Crucial-01}\\
	\|N\psi-N\bar{\psi}\|_{X_s(T)}&\lesssim \|\psi-\bar{\psi}\|_{X_s(T)}\big(\|\psi\|_{X_s(T)}+\|\bar{\psi}\|_{X_s(T)}\big),\label{Crucial-02}
\end{align}
for any $\psi,\bar{\psi}\in X_s(T)$, with the sufficiently small initial data in the topology of
\begin{align*}
\ml{A}_s:=(H^{s+2}\cap L^1)\times (H^{s+1}\cap L^1)\times (H^s\cap L^1).
\end{align*}
Therefore, \eqref{Crucial-01} together with \eqref{Crucial-02} derives the existence of unique Sobolev solution $\psi=N\psi\in X_s(T)$ by applying the Banach fixed point theorem. Furthermore, $\psi$ can be globally in-time prolonged because \eqref{Crucial-01} and \eqref{Crucial-02} hold uniformly with respect to $T$.

\subsubsection{Tools from the harmonic analysis}
\hspace{5mm}We in this part collect some useful inequalities from the harmonic analysis that will be used to estimate the nonlinear terms in homogeneous Sobolev spaces.

\begin{lemma}[Fractional Gagliardo-Nirenberg inequality, \cite{Hajaiej-Molinet-Ozawa-Wang-2011}]\label{fractionalgagliardonirenbergineq} 
		Let $p,p_0,p_1\in(1,+\infty)$ and $\kappa\in[0,s)$ with $s\in(0,+\infty)$. Then, the following fractional Gagliardo-Nirenberg inequality:
	\begin{align*}
		\|f\|_{\dot{H}^{\kappa}_{p}}\lesssim\|f\|_{L^{p_0}}^{1-\beta}\|f\|^{\beta}_{\dot{H}^{s}_{p_1}},
	\end{align*}
holds for $f\in L^{p_0}\cap \dot{H}^{s}_{p_1}$,	where  $\beta:=\frac{\frac{1}{p_0}-\frac{1}{p}+\frac{\kappa}{n}}{\frac{1}{p_0}-\frac{1}{p_1}+\frac{s}{n}}$ and $ \beta\in[\frac{\kappa}{s},1]$.
\end{lemma}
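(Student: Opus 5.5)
The natural route is to split the inequality into two standard steps: a pure interpolation estimate between $L^{p_0}$ and $\dot{H}^s_{p_1}$ at the intermediate smoothness $\beta s$, followed by a homogeneous Sobolev embedding that trades the surplus smoothness $\beta s-\kappa\geqslant 0$ for integrability. Concretely, I define $q$ by $\frac{1}{q}:=\frac{1-\beta}{p_0}+\frac{\beta}{p_1}$, so that $q\in(1,+\infty)$. Using the definition of $\beta$, a direct computation gives
\begin{align*}
\frac{1}{q}=\frac{1}{p_0}-\beta\Big(\frac{1}{p_0}-\frac{1}{p_1}\Big)=\frac{1}{p}-\frac{\kappa}{n}+\frac{\beta s}{n},\qquad\mbox{i.e.}\qquad \beta s-\frac{n}{q}=\kappa-\frac{n}{p}.
\end{align*}
This is exactly the scaling relation of the embedding $\dot{H}^{\beta s}_q\hookrightarrow\dot{H}^{\kappa}_p$. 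Moreover, the restriction $\beta\geqslant\frac{\kappa}{s}$ gives $\beta s\geqslant\kappa$, hence $q\leqslant p$.

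\emph{Step 1 (interpolation).} I will show $\|f\|_{\dot{H}^{\beta s}_q}\lesssim\|f\|_{L^{p_0}}^{1-\beta}\|f\|_{\dot{H}^s_{p_1}}^{\beta}$ by Stein's analytic interpolation. By density it suffices to take $f$ Schwartz with Fourier support away from the origin. I then pair $|D|^{\beta s}f$ against a test function $g$ with $\|g\|_{L^{q'}}\leqslant 1$. Next I build the analytic family
\begin{align*}
F(z):=\mathrm{e}^{z^2-\beta^2}\int_{\mb{R}^n}|D|^{zs}f\cdot g_z\,\mathrm{d}x,
\end{align*}
where $g_z$ is the usual analytic modification of $g$ that places $|g|$ in $L^{q'}$ along the family of exponents $\frac{1-z}{p_0'}+\frac{z}{p_1'}$.

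On the line $\mathrm{Re}\,z=0$, the operator $|D|^{iys}$ is bounded on $L^{p_0}$ with norm $\lesssim(1+|y|)^{n/2+1}$ by the Mikhlin multiplier theorem. On the line $\mathrm{Re}\,z=1$, one has $|D|^{(1+iy)s}f=|D|^{iys}|D|^sf$, which gives the same polynomial bound in terms of $\|f\|_{\dot{H}^s_{p_1}}$. The Gaussian factor $\mathrm{e}^{z^2}$ absorbs these polynomial growths, and the three lines lemma at $z=\beta$ yields the claim.

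\emph{Step 2 (embedding).} I will prove $\|h\|_{\dot{H}^{\kappa}_p}\lesssim\|h\|_{\dot{H}^{\beta s}_q}$ with $h=f$. Writing $|D|^{\kappa}f=|D|^{-(\beta s-\kappa)}|D|^{\beta s}f$, this is the Hardy--Littlewood--Sobolev inequality for the Riesz potential $I_{\beta s-\kappa}$ from $L^q$ to $L^p$. It applies because $1<q<p<+\infty$ and $\frac{1}{p}=\frac{1}{q}-\frac{\beta s-\kappa}{n}$. In the endpoint case $\beta=\frac{\kappa}{s}$ we have $q=p$, and Step 2 is simply the identity. Combining the two steps gives the lemma.

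The main obstacle is Step 1. It requires handling the homogeneous spaces rigorously: dense classes modulo polynomials, analyticity and admissible growth of $F$, and a bound for the imaginary powers $|D|^{iy}$ that is at most polynomial in $|y|$. I would take these from Mikhlin's theorem and standard references, as in \cite{Hajaiej-Molinet-Ozawa-Wang-2011}.
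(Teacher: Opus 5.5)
The paper does not prove this lemma; it simply imports it from \cite{Hajaiej-Molinet-Ozawa-Wang-2011}. Your argument is correct and is essentially the standard sufficiency proof in that reference: complex (Stein) interpolation between $L^{p_0}$ and $\dot{H}^s_{p_1}$ at the exponent $\frac{1}{q}=\frac{1-\beta}{p_0}+\frac{\beta}{p_1}$, followed by the Hardy--Littlewood--Sobolev embedding $\dot{H}^{\beta s}_q\hookrightarrow\dot{H}^{\kappa}_p$, which is the identity when $\beta=\frac{\kappa}{s}$.

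Your exponent bookkeeping checks out: $q\in(1,+\infty)$, $q\leqslant p$, and $0\leqslant\beta s-\kappa=n(\frac{1}{q}-\frac{1}{p})<n$. The density and homogeneous-space issues you flag are routine, for instance via Littlewood--Paley truncations, which are uniformly bounded on both $L^{p_0}$ and $\dot{H}^s_{p_1}$.
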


\begin{lemma}[Fractional Leibniz rule, \cite{Grafakos-Oh-2014}]\label{fractionleibnizrule}
	Let $s\in(0,+\infty)$, $r\in[1,+\infty]$ and $p_1,p_2,q_1,q_2\in(1,+\infty]$ satisfying the relation $\frac{1}{r}=\frac{1}{p_1}+\frac{1}{p_2}=\frac{1}{q_1}+\frac{1}{q_2}$. Then, the following fractional Leibniz rule:
\begin{align*}
	\|fg\|_{\dot{H}^{s}_{r}}\lesssim \|f\|_{\dot{H}^{s}_{p_1}}\|g\|_{L^{p_2}}+\|f\|_{L^{q_1}}\|g\|_{\dot{H}^{s}_{q_2}}
\end{align*}
holds for $f\in  \dot{H}^{s}_{p_1}\cap L^{q_1}$ and $g\in L^{p_2}\cap \dot{H}^{s}_{q_2}$.
\end{lemma}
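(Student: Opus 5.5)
The fractional Leibniz rule of Lemma \ref{fractionleibnizrule} is a classical result, and I would prove it with Bony's paraproduct decomposition combined with Littlewood--Paley theory. Let $\{\Delta_k\}_{k\in\mb{Z}}$ be a homogeneous dyadic decomposition with Fourier supports in $\{2^{k-1}\leqslant|\xi|\leqslant 2^{k+1}\}$, and set $S_k:=\sum_{k'\leqslant k-3}\Delta_{k'}$. Then
\begin{align*}
fg=\sum_{k}S_kf\,\Delta_kg+\sum_k\Delta_kf\,S_kg+\sum_{|k-k'|\leqslant 2}\Delta_kf\,\Delta_{k'}g=:\Pi_1+\Pi_2+\Pi_3 .
\end{align*}
Throughout, $\|h\|_{\dot{H}^s_r}$ means $\||D|^sh\|_{L^r}$.

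For the low--high paraproduct $\Pi_1$, each summand has Fourier support in an annulus of size $2^k$. Hence $|D|^s\Pi_1=\sum_k S_kf\,2^{ks}\widetilde{\Delta}_k g$, where $\widetilde{\Delta}_k$ is a slightly modified Littlewood--Paley piece. This is a Coifman--Meyer bilinear operator applied to the pair $(f,|D|^sg)$. For $1<r<\infty$, the square function characterization of $L^r$ and the Fefferman--Stein vector-valued maximal inequality give
\begin{align*}
\|\Pi_1\|_{\dot{H}^s_r}\lesssim\Big\|\sup_k|S_kf|\Big(\sum_k|2^{ks}\Delta_kg|^2\Big)^{1/2}\Big\|_{L^r}\lesssim\|f\|_{L^{q_1}}\|g\|_{\dot{H}^s_{q_2}} .
\end{align*}
The term $\Pi_2$ is handled symmetrically and gives $\|f\|_{\dot{H}^s_{p_1}}\|g\|_{L^{p_2}}$. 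When one of $q_1,p_2$ equals $+\infty$, the maximal function bound $\sup_k|S_kf|\leqslant Mf$ still suffices, since only the $L^\infty$ norm of that factor is used.

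For the high--high term $\Pi_3$, the summands have Fourier support in balls of radius about $2^k$, not annuli, so the square function argument is no longer direct. Here $s>0$ is essential. Writing $\Delta_j\Pi_3=\sum_{k\geqslant j-4}\Delta_j(\Delta_kf\,\widetilde{\Delta}_kg)$, the factor $2^{js}2^{-ks}$ is summable over $k\geqslant j-4$ exactly because $s>0$. This yields $\|\Pi_3\|_{\dot{H}^s_r}\lesssim\|f\|_{\dot{H}^s_{p_1}}\|g\|_{L^{p_2}}$ after putting all derivatives on $f$, for $1<r<\infty$.

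I expect the main obstacle to be the endpoint ranges, namely $r\leqslant 1$-type issues at $r=1$ and exponents equal to $+\infty$, which are exactly what \cite{Grafakos-Oh-2014} addresses. At $r=1$, $L^1$ has no square function characterization. There I would replace $L^r$ by the Hardy space $H^1$, bound $\||D|^s(fg)\|_{H^1}$ through the Hardy space version of the Coifman--Meyer theorem, and then use $H^1\hookrightarrow L^1$. The case $p_1=\infty$ or $q_2=\infty$ for the derivative factor is excluded by the hypothesis $p_1,q_2\in(1,+\infty]$ only when paired suitably, and I would check that each paraproduct puts the $\infty$ exponent on the undifferentiated factor. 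For the applications in Section \ref{Section-Nonlinear} only $r=2$ with $p_i,q_i\in[2,\infty]$ is needed, so the proof above with $1<r<\infty$ suffices there; for the general statement I would defer to the cited reference.
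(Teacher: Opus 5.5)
The paper gives no proof of this lemma; it is quoted from \cite{Grafakos-Oh-2014}. Your paraproduct route is the standard one and matches that reference in spirit. For $1<r<\infty$ with $p_1,q_2<+\infty$ it is sound, and this already covers every use in Section~\ref{Section-Nonlinear}, where $r=2$, the differentiated factor lies in $L^2$ and the other in $L^\infty$.

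There is one slip in that range. The identity $|D|^s\Pi_1=\sum_kS_kf\,2^{ks}\widetilde{\Delta}_kg$ is false, because $|D|^s$ does not pass through $S_kf$. What is true is $|D|^s(S_kf\,\Delta_kg)=2^{ks}\theta_k\ast(S_kf\,\Delta_kg)$, with $\theta_k$ an $L^1$-normalized kernel at scale $2^{-k}$; this holds because the product is frequency-localized where $|\xi|\sim 2^k$. The reverse Littlewood--Paley inequality for annular pieces plus Fefferman--Stein then give the bound you wrote.

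As a proof of the lemma as stated, however, the endpoint discussion has genuine gaps.

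(i) The hypotheses do allow $q_2=+\infty$ (and symmetrically $p_1=+\infty$), and your plan to put the $\infty$ exponent on the undifferentiated factor cannot work in $\Pi_1$. There the derivative cannot be moved off the high-frequency factor $g$. The square function $\|(\sum_k|2^{ks}\Delta_kg|^2)^{1/2}\|_{L^\infty}$ is not controlled by $\||D|^sg\|_{L^\infty}$. Putting the supremum on $2^{ks}\Delta_kg$ instead leaves the divergent factor $(\sum_k|S_kf|^2)^{1/2}$. What is needed is the endpoint Coifman--Meyer bound $L^{q_1}\times L^\infty\to L^{q_1}$ for the bilinear operator $(f,|D|^sg)\mapsto|D|^s\Pi_1$. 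Its proof goes through BMO and Carleson measures, not Fefferman--Stein. In $\Pi_3$ this endpoint is harmless: use $\sup_k|2^{ks}\Delta_kf|\lesssim M(|D|^sf)$ and put the square function on $g$.

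(ii) At $r=1$, a Hardy-space Coifman--Meyer theorem handles $\Pi_1$ and $\Pi_2$ but not $\Pi_3$. The symbol of $|D|^s\Pi_3$ carries the factor $|\xi+\eta|^s$ on a region where $\xi+\eta$ vanishes, so it is not of Coifman--Meyer type unless $s\in 2\mathbb{N}$. You need instead the summation lemma for pieces $F_k$ with Fourier support in $\{|\xi|\lesssim 2^k\}$. It rests on the Peetre--Nikol'skij bound $|\Delta_jF_k|\lesssim 2^{(k-j)n(1/t-1)}(M|F_k|^t)^{1/t}$ for $j\leqslant k+C$, with $t<1$ close to $1$. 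The loss $2^{(k-j)n(1/t-1)}$ is absorbed by $2^{-(k-j)s}$ exactly because $s>0=n(1/r-1)$.

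(iii) The case $r=+\infty$ (which forces all exponents to be $+\infty$) is part of the statement. It is covered neither by your argument nor by \cite{Grafakos-Oh-2014}, which assumes $r<\infty$. That endpoint is a later result of Bourgain and Li, so deferring to the cited reference does not close it.
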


\begin{lemma}[Fractional Sobolev embedding, \cite{Grafakos=2009}]\label{fractionembedd} Let $-\infty<\alpha_0<\frac{n}{2}<\beta_0<+\infty$. Then, the following fractional Sobolev embedding:
	\begin{equation*}
		\|f\|_{L^{\infty}}\lesssim\|f\|_{\dot{H}^{\alpha_0}}^{\frac{2\beta_0-n}{2(\beta_0-\alpha_0)}}\|f\|_{\dot{H}^{\beta_0}}^{\frac{n-2\alpha_0}{2(\beta_0-\alpha_0)}}
	\end{equation*}
	holds for $f\in \dot{H}^{\alpha_0}\cap \dot{H}^{\beta_0}$.
\end{lemma}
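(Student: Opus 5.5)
The plan is to work entirely on the Fourier side: bound $\|f\|_{L^\infty}$ by $\|\widehat{f}\|_{L^1}$, split frequency space at a radius $R>0$, control each piece by one of the two homogeneous norms via Cauchy--Schwarz, and then optimize in $R$. Write $A:=\|f\|_{\dot{H}^{\alpha_0}}$ and $B:=\|f\|_{\dot{H}^{\beta_0}}$. If $A=0$ or $B=0$, then $\widehat{f}=0$ almost everywhere and there is nothing to prove, so assume $A,B>0$.

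\textbf{Step 1.} By the Fourier inversion formula,
\begin{align*}
\|f\|_{L^\infty}\lesssim\|\widehat{f}\|_{L^1}=\int_{|\xi|\leqslant R}|\widehat{f}(\xi)|\,\mathrm{d}\xi+\int_{|\xi|\geqslant R}|\widehat{f}(\xi)|\,\mathrm{d}\xi .
\end{align*}
This needs $\widehat{f}\in L^1$, which Steps 2 and 3 will establish a posteriori; this also shows that $f$ is a bounded continuous function.

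\textbf{Step 2 (low frequencies).} Insert $|\xi|^{-\alpha_0}|\xi|^{\alpha_0}$ and apply Cauchy--Schwarz:
\begin{align*}
\int_{|\xi|\leqslant R}|\widehat{f}(\xi)|\,\mathrm{d}\xi\leqslant\Big(\int_{|\xi|\leqslant R}|\xi|^{-2\alpha_0}\,\mathrm{d}\xi\Big)^{\frac{1}{2}}A\lesssim R^{\frac{n}{2}-\alpha_0}A .
\end{align*}
The radial integral converges precisely because $n-2\alpha_0>0$, i.e. $\alpha_0<\frac{n}{2}$.

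\textbf{Step 3 (high frequencies).} In the same way,
\begin{align*}
\int_{|\xi|\geqslant R}|\widehat{f}(\xi)|\,\mathrm{d}\xi\lesssim R^{\frac{n}{2}-\beta_0}B ,
\end{align*}
with convergence at infinity guaranteed by $2\beta_0>n$.

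\textbf{Step 4 (optimization).} Choose $R$ to balance the two bounds, $R:=(B/A)^{\frac{1}{\beta_0-\alpha_0}}$, which makes $R^{\frac{n}{2}-\alpha_0}A=R^{\frac{n}{2}-\beta_0}B$. Then
\begin{align*}
\|f\|_{L^\infty}\lesssim A\,(B/A)^{\frac{n-2\alpha_0}{2(\beta_0-\alpha_0)}}=A^{\frac{2\beta_0-n}{2(\beta_0-\alpha_0)}}\,B^{\frac{n-2\alpha_0}{2(\beta_0-\alpha_0)}},
\end{align*}
which is exactly the claimed inequality.

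There is no serious obstacle. The only points needing care are checking that the two radial integrals converge, which is exactly where the strict inequalities $\alpha_0<\frac{n}{2}<\beta_0$ enter, and confirming that the exponent bookkeeping in the optimization reproduces the stated powers.
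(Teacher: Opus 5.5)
Your argument is correct. The paper gives no proof of its own and only cites Grafakos, and your proof (splitting $\|\widehat{f}\|_{L^1}$ at radius $R$, applying Cauchy--Schwarz with the weights $|\xi|^{-\alpha_0}$ on $\{|\xi|\leqslant R\}$ and $|\xi|^{-\beta_0}$ on $\{|\xi|\geqslant R\}$, then balancing in $R$) is the standard argument behind that citation, and your exponent bookkeeping is right. The one implicit convention is that, for $\alpha_0<\frac{n}{2}$, elements of $\dot{H}^{\alpha_0}$ are taken as tempered distributions with $\widehat{f}\in L^1_{\mathrm{loc}}$; this is what makes the Fourier inversion step and the degenerate case $A=0$ or $B=0$ rigorous.
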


\subsection{A-priori estimates for the nonlinear parts}
\hspace{5mm}We firstly estimate the nonlinear initial data $\ml{N}(\psi_1,\nabla\psi_0)$ from \eqref{Rep-non} and \eqref{Cn} in the $L^2\cap L^1$ and $\dot{H}^s\cap L^1$ norms. Applying the fractional Gagliardo-Nirenberg inequality in Lemma \ref{fractionalgagliardonirenbergineq}, we get
\begin{align*}
	\|\ml{N}(\psi_1,\nabla\psi_0)\|_{L^2\cap L^1}&\lesssim \|\psi_0\|_{\dot{H}^1}^2+\|\psi_0\|_{\dot{H}^1_4}^2+\|\psi_1\|_{L^2}^2+\|\psi_1\|_{L^4}^2\\
	&\lesssim \|\psi_0\|_{\dot{H}^1}^2+\|\psi_0\|_{L^2}^{2-\frac{n+4}{2(s+2)}}\|\psi_0\|_{\dot{H}^{s+2}}^{\frac{n+4}{2(s+2)}}+\|\psi_1\|_{L^2}^2+\|\psi_1\|_{L^2}^{2-\frac{n}{2(s+1)}}\|\psi_1\|_{\dot{H}^{s+1}}^{\frac{n}{2(s+1)}}\\
	&\lesssim\|\psi_0\|_{H^{s+2}}^2+\|\psi_1\|_{H^{s+1}}^2
\end{align*}
and
\begin{align*}
	\|\ml{N}(\psi_1,\nabla\psi_0)\|_{\dot{H}^s\cap L^1}&\lesssim \|\nabla\psi_0\|_{L^{\infty}}\|\psi_0\|_{\dot{H}^{s+1}}+\|\psi_1\|_{L^{\infty}}\|\psi_1\|_{\dot{H}^s}+\|\psi_0\|_{\dot{H}^1}^2+\|\psi_1\|_{L^2}^2\\
	&\lesssim \|\psi_0\|_{L^2}^{\frac{2(s+1)-n}{2(s+2)}}\|\psi_0\|_{\dot{H}^{s+2}}^{\frac{n+2}{2(s+2)}}\|\psi_0\|_{\dot{H}^{s+1}}+\|\psi_1\|_{L^2}^{\frac{2(s+1)-n}{2(s+1)}}\|\psi_1\|_{\dot{H}^{s+1}}^{\frac{n}{2(s+1)}}\|\psi_1\|_{\dot{H}^s}\\
	&\quad +\|\psi_0\|_{\dot{H}^1}^2+\|\psi_1\|_{L^2}^2\\
	&\lesssim\|\psi_0\|_{H^{s+2}}^2+\|\psi_1\|_{H^{s+1}}^2
\end{align*}
by using Lemma \ref{fractionembedd} with $(\alpha_0,\beta_0)\in\{(-1,s+1),(0,s+1)\}$ thanks to $s+1>\frac{n}{2}$.

Obviously,
\begin{align*}
\|\,|\psi_t(\eta,\cdot)|^2\|_{L^1}&\lesssim (1+\eta)^{-\frac{n+4}{2}}\|\psi\|_{X_s(\eta)}^2,\\
	\|\,|\nabla\psi(\eta,\cdot)|^2\|_{L^1}&\lesssim (1+\eta)^{-\frac{n+2}{2}}\|\psi\|_{X_s(\eta)}^2,
\end{align*}
from the definition of evolution space $X_s(\eta)$.
Let us now estimate $|\psi_t(\eta,\cdot)|^2$ and $|\nabla\psi(\eta,\cdot)|^2$ in the $L^2$ and $\dot{H}^{s+1}$ norms, respectively. Actually, Lemma \ref{fractionalgagliardonirenbergineq} implies
\begin{align*}
	\|\,|\psi_t(\eta,\cdot)|^2\|_{L^2}\lesssim\|\psi_t(\eta,\cdot)\|_{L^4}^2&\lesssim\|\psi_t(\eta,\cdot)\|_{L^2}^{2-\frac{n}{2(s+1)}}\|\psi_t(\eta,\cdot)\|_{\dot{H}^{s+1}}^{\frac{n}{2(s+1)}}\\
	&\lesssim (1+\eta)^{-\frac{3n+8}{4}}\|\psi\|_{X_s(\eta)}^2,
\end{align*}
where we considered the condition $\frac{n}{4(s+1)}\in[0,1]$ which is always  true thanks to the assumption $s>\frac{n}{2}-1$. We make use of Lemma \ref{fractionembedd} to deduce
\begin{align*}
	\|\,|\psi_t(\eta,\cdot)|^2\|_{\dot{H}^{s+1}}&\lesssim\|\psi_t(\eta,\cdot)\|_{L^{\infty}}\|\psi_t(\eta,\cdot)\|_{\dot{H}^{s+1}}\\
	&\lesssim\|\psi_t(\eta,\cdot)\|_{L^2}^{1-\frac{n}{2(s+1)}}\|\psi_t(\eta,\cdot)\|_{\dot{H}^{s+1}}^{1+\frac{n}{2(s+1)}}\\
	&\lesssim(1+\eta)^{-\frac{3n+2s+10}{4}}\|\psi\|_{X_s(\eta)}^2.
\end{align*}
By using the same approach as the last lines, one easily finds
\begin{align*}
	\|\,|\nabla\psi(\eta,\cdot)|^2\|_{L^2}&\lesssim (1+\eta)^{-\frac{3n+4}{4}}\|\psi\|_{X_s(\eta)}^2,\\
	\|\,|\nabla\psi(\eta,\cdot)|^2\|_{\dot{H}^{s+1}}&\lesssim (1+\eta)^{-\frac{3n+2s+6}{4}}\|\psi\|_{X_s(\eta)}^2,
\end{align*}
due to $\frac{n+4}{4(s+2)}\in[\frac{1}{s+2},1]$.
The last three estimates for $|\nabla\psi(\eta,\cdot)|^2$ slower than the corresponding estimates for $|\psi_t(\eta,\cdot)|^2$ in the $L^1$, $L^2$ and $\dot{H}^{s+1}$ norms. That is to say,
\begin{align}
\|\ml{N}(\psi_t,\nabla\psi)(\eta,\cdot)\|_{\ml{X}\cap L^1}&\lesssim (1+\eta)^{-\frac{n+2}{2}}\|\psi\|_{X_s(\eta)}^2\label{D3}
\end{align}
for all $\ml{X}\in\{L^2,\dot{H}^{s+1}\}$, and
\begin{align} 
\|\ml{N}(\psi_t,\nabla\psi)(\eta,\cdot)\|_{L^2}&\lesssim (1+\eta)^{-\frac{3n+4}{4}}\|\psi\|_{X_s(\eta)}^2,\label{D4}\\
\|\ml{N}(\psi_t,\nabla\psi)(\eta,\cdot)\|_{\dot{H}^{s+1}}&\lesssim (1+\eta)^{-\frac{3n+2s+6}{4}}\|\psi\|_{X_s(\eta)}^2.\label{D5}
\end{align}

\subsection{Proof of Theorem \ref{Thm-GESDS}}
\subsubsection{Global in-time existence of Sobolev solution}
\hspace{5mm}Let us begin with the lower-order term $\partial_t^j N\psi$ for $j\in\{0,1\}$ introduced by \eqref{A5} together with  \eqref{Rep-non}. Theorem \ref{Theorem-Linear-Estimates} already derived the estimate for $\partial_t^j\psi^{\lin}$.
 In the next step, we apply the $(L^2\cap L^1)-L^2$ estimate in  $[0,\frac{t}{2}]$ and the $L^2-L^2$ estimate in $[\frac{t}{2},t]$ from Corollary \ref{CORO-3.1} (by taking $s=j-2$)
 \begin{align*}
 (1+t)^{\frac{n+4j}{4}}\|\partial_t^jN\psi(t,\cdot)\|_{L^2}&\lesssim \|(\psi_0,\psi_1,\psi_2)\|_{\ml{A}_s}+\|\ml{N}(\psi_1,\nabla\psi_0)\|_{L^2\cap L^1}\\
 &\quad+(1+t)^{\frac{n+4j}{4}}\int_0^{\frac{t}{2}}(1+t-\eta)^{-\frac{n+4+4j}{4}}\|\ml{N}(\psi_t,\nabla\psi)(\eta,\cdot)\|_{L^2\cap L^1}\,\mathrm{d}\eta\\
 &\quad+(1+t)^{\frac{n+4j}{4}}\int_{\frac{t}{2}}^t(1+t-\eta)^{-\frac{3+j}{2}}\|\ml{N}(\psi_t,\nabla\psi)(\eta,\cdot)\|_{L^2}\,\mathrm{d}\eta.
 \end{align*}
 Furthermore, with the help of estimates \eqref{D3} and \eqref{D4}, we arrive at
\begin{align}
	&(1+t)^{\frac{n+4j}{4}}\|\partial_t^jN\psi(t,\cdot)\|_{L^2}\notag\\
	&\lesssim\|(\psi_0,\psi_1,\psi_2)\|_{\ml{A}_s}+\|(\psi_0,\psi_1)\|_{H^{s+2}\times H^{s+1}}^2+(1+t)^{-1}\int_0^{\frac{t}{2}}(1+\eta)^{-\frac{n+2}{2}}\,\mathrm{d}\eta\,\|\psi\|_{X_s(T)}^2\notag\\
	&\quad+(1+t)^{-\frac{n+2-2j}{2}}\int_{\frac{t}{2}}^t(1+t-\eta)^{-\frac{3+j}{2}}\,\mathrm{d}\eta\, \|\psi\|_{X_s(T)}^2\notag\\
	&\lesssim\|(\psi_0,\psi_1,\psi_2)\|_{\ml{A}_s}+\big((1+t)^{-1}+(1+t)^{-\frac{n+2-2j}{2}}\big)\|\psi\|_{X_s(T)}^2\notag\\
	&\lesssim\|(\psi_0,\psi_1,\psi_2)\|_{\ml{A}_s}+\|\psi\|_{X_s(T)}^2,\label{Est-07}
\end{align}
where we used
\begin{align*}
\|(\psi_0,\psi_1,\psi_2)\|_{\ml{A}_s}+\|(\psi_0,\psi_1)\|_{H^{s+2}\times H^{s+1}}^2&\leqslant \|(\psi_0,\psi_1,\psi_2)\|_{\ml{A}_s}\big(1+\|(\psi_0,\psi_1,\psi_2)\|_{\ml{A}_s}\big)\\
&\lesssim \|(\psi_0,\psi_1,\psi_2)\|_{\ml{A}_s}
\end{align*}
since the small size of initial data in $\ml{A}_s$. Analogously, via \eqref{D3} and \eqref{D5}, the solution in $\dot{H}^{s+2-j}$ can be estimated as follows:
\begin{align}
	&(1+t)^{\frac{n+2s+4+2j}{4}}\|\partial_t^jN\psi(t,\cdot)\|_{\dot{H}^{s+2-j}}\notag\\
	&\lesssim\|(\psi_0,\psi_1,\psi_2)\|_{\ml{A}_s}+\|(\psi_0,\psi_1)\|_{H^{s+2}\times H^{s+1}}^2+(1+t)^{-1}\int_0^{\frac{t}{2}}(1+\eta)^{-\frac{n+2}{2}}\,\mathrm{d}\eta\,\|\psi\|_{X_s(T)}^2\notag\\
	&\quad+(1+t)^{-\frac{n+1-j}{2}}\int_{\frac{t}{2}}^t(1+t-\eta)^{-\frac{3+j}{2}}\,\mathrm{d}\eta\,\|\psi\|_{X_s(T)}^2\notag\\
	&\lesssim\|(\psi_0,\psi_1,\psi_2)\|_{\ml{A}_s}+\big((1+t)^{-1}+(1+t)^{-\frac{n+1-j}{2}}\big)\|\psi\|_{X_s(T)}^2\notag\\
	&\lesssim\|(\psi_0,\psi_1,\psi_2)\|_{\ml{A}_s}+\|\psi\|_{X_s(T)}^2.\label{Est-08}
\end{align}

Let us turn to the second-order time-derivative by concerning $\sigma\in\{0,s\}$. However, the new difficulty for $n=1$ comes from the representation \eqref{Cn} due to the a-priori estimates
\begin{align*}
\|\partial_t^2\psi(t,\cdot)\|_{\dot{H}^{\sigma}}&\lesssim (1+t)^{-\frac{n+2\sigma+8}{4}}\|\psi\|_{X_s(T)},\\
\|\ml{N}(\psi_t,\nabla\psi)(t,\cdot)\|_{\dot{H}^{\sigma}}&\lesssim (1+t)^{-\frac{3n+2\sigma+4}{4}}\|\psi\|_{X_s(T)}^2.
\end{align*}
Namely, the first term in the representation \eqref{Cn} plays a dominant role comparing with its left-hand side when $n=1$ such that $-\frac{3n+2\sigma+4}{4}>-\frac{n+2\sigma+8}{4}$ when $n=1$, which prevent us from proving a uniform estimate with respect to $t$.
\begin{remark}\label{Rem-W-model}
Concerning the nonlinearity of Westervelt-type model \eqref{Westervelt}, thanks to
\begin{align*}
\|\ml{N}(\psi_t,\psi_t)(t,\cdot)\|_{\dot{H}^{\sigma}}\lesssim (1+t)^{-\frac{3n+2\sigma+8}{4}}\|\psi\|_{X_s(T)}^2
\end{align*}
as well as $-\frac{3n+2\sigma+8}{4}<-\frac{n+2\sigma+8}{4}$ for any $n\geqslant 1$, the previous difficulty does not appear. Hence, one still may use the representation \eqref{Cn} in the treatment of $\partial_t^2N\psi$.
\end{remark}
\noindent To overcome this difficulty, we separate the integration $[0,t]$ of \eqref{Cn} into $[0,\frac{t}{2}]$ and $[\frac{t}{2},t]$. Furthermore, an integration by parts is used again in $[\frac{t}{2},t]$ to cancel the troublesomeness $\ml{N}(\psi_t,\nabla\psi)$ and get
\begin{align}\label{Rep-non-2}
\partial_t^2\psi^{\non}(t,x)&=- \partial_t^2 K_2(t,x)\ast_{(x)}\ml{N}(\psi_1,\nabla\psi_0)(x)+ \partial_t^2 K_2(\tfrac{t}{2},x)\ast_{(x)}\ml{N}(\psi_t,\nabla\psi)(\tfrac{t}{2},x)\notag\\
&\quad+\int_0^{\frac{t}{2}}\partial_t^{3}K_2(t-\eta,x)\ast_{(x)}\ml{N}(\psi_t,\nabla\psi)(\eta,x)\,\mathrm{d}\eta\notag\\
&\quad+\int_{\frac{t}{2}}^t\partial_t^2K_2(t-\eta,x)\ast_{(x)}\partial_t\ml{N}(\psi_t,\nabla\psi)(\eta,x)\,\mathrm{d}\eta.
\end{align}
Although now we still have the troublesomeness $\ml{N}(\psi_t,\nabla\psi)$ on the right-hand side, its kernel $\partial_t^2 K_2(\tfrac{t}{2},x)$ provides a faster decay rate.
As our preparation, we have to estimate the derivative-type nonlinearity by Lemma \ref{fractionembedd} such that
\begin{align*}
\|\partial_t\ml{N}(\psi_t,\nabla\psi)(\eta,\cdot)\|_{\dot{H}^{\sigma}}&\lesssim \|\psi_t(\eta,\cdot)\psi_{tt}(\eta,\cdot)\|_{\dot{H}^{\sigma}}+\|\nabla\psi(\eta,\cdot)\cdot\nabla\psi_t(\eta,\cdot)\|_{\dot{H}^{\sigma}}\\
&\lesssim\|\psi_t(\eta,\cdot)\|_{L^{\infty}}\|\psi_{tt}(\eta,\cdot)\|_{\dot{H}^{\sigma}}+\|\nabla\psi(\eta,\cdot)\|_{L^{\infty}}\|\nabla\psi_t(\eta,\cdot)\|_{\dot{H}^{\sigma}}\\
&\lesssim\|\psi_t(\eta,\cdot)\|_{L^2}^{\frac{2(s+1)-n}{2(s+1)}}\|\psi_t(\eta,\cdot)\|_{\dot{H}^{s+1}}^{\frac{n}{2(s+1)}}\|\psi_{tt}(\eta,\cdot)\|_{\dot{H}^{\sigma}}\\
&\quad+\|\psi(\eta,\cdot)\|_{L^2}^{\frac{2(s+1)-n}{2(s+2)}}\|\psi(\eta,\cdot)\|_{\dot{H}^{s+2}}^{\frac{n+2}{2(s+2)}}\|\psi_{t}(\eta,\cdot)\|_{\dot{H}^{\sigma+1}}\\
&\lesssim(1+\eta)^{-\frac{3n+2\sigma+8}{4}}\|\psi\|_{X_s(\eta)}^2.
\end{align*}
Recalling Corollary \ref{CORO-3.1} (by taking $j=1$ and $s+1=\sigma$ in $[\frac{t}{2},t]$) additionally, one has
\begin{align*}
&(1+t)^{\frac{n+2\sigma+8}{4}}\|\partial_t^2N\psi(t,\cdot)\|_{\dot{H}^{\sigma}}\\
&\lesssim\|(\psi_0,\psi_1,\psi_2)\|_{\ml{A}_s}+\|\ml{N}(\psi_1,\nabla\psi_0)\|_{\dot{H}^{\sigma}\cap L^1}+(1+t)^{\frac{n+2\sigma}{4}}\|\ml{N}(\psi_t,\nabla\psi)(\tfrac{t}{2},\cdot)\|_{\dot{H}^\sigma}\\
	&\quad+(1+t)^{\frac{n+2\sigma+8}{4}}\int_0^{\frac{t}{2}}(1+t-\eta)^{-\frac{n+2\sigma+12}{4}}(1+\eta)^{-\frac{n+2}{2}}\,\mathrm{d}\eta\,\|\psi\|_{X_s(T)}^2\\
	&\quad+(1+t)^{\frac{n+2\sigma+8}{4}}\int_{\frac{t}{2}}^t(1+t-\eta)^{-2}(1+\eta)^{-\frac{3n+2\sigma+8}{4}}\,\mathrm{d}\eta\,\|\psi\|_{X_s(T)}^2.
\end{align*} 
So, by using $(1+t-\eta)\approx (1+t)$ when $\eta\in[0,\frac{t}{2}]$ and $(1+\eta)\approx (1+t)$ when $\eta\in[\frac{t}{2},t]$, concerning $\sigma\in\{0,s\}$, we derive
\begin{align}
	&(1+t)^{\frac{n+2\sigma+8}{4}}\|\partial_t^2N\psi(t,\cdot)\|_{\dot{H}^{\sigma}}\notag\\
	&\lesssim \|(\psi_0,\psi_1,\psi_2)\|_{\ml{A}_s}+(1+t)^{-\frac{n+2}{2}}\|\psi\|_{X_s(T)}^2+(1+t)^{-1}\int_0^{\frac{t}{2}}(1+\eta)^{-\frac{n+2}{2}}\,\mathrm{d}\eta\,\|\psi\|_{X_s(T)}^2\notag\\
	&\quad+(1+t)^{-\frac{n}{2}}\int_{\frac{t}{2}}^t(1+t-\eta)^{-2}\,\mathrm{d}\eta\,\|\psi\|_{X_s(T)}^2\notag\\
	&\lesssim \|(\psi_0,\psi_1,\psi_2)\|_{\ml{A}_s}+\big((1+t)^{-1}+(1+t)^{-\frac{n}{2}}\big)\|\psi\|_{X_s(T)}^2\notag\\
	&\lesssim \|(\psi_0,\psi_1,\psi_2)\|_{\ml{A}_s}+\|\psi\|_{X_s(T)}^2.\label{C6}
\end{align}
Combining all previous estimates, we complete the proof of desired estimate \eqref{Crucial-01}. Analogously, by using the H\"older inequality to deal with the difference between $\ml{N}(\psi_t,\nabla\psi)$ and $\ml{N}(\bar{\psi}_t,\nabla\bar{\psi})$, moreover, applying Lemma \ref{fractionleibnizrule} additionally, we conclude \eqref{Crucial-02}.

Hence, we rigorously proved the global in-time existence of small data Sobolev solution in $ X_s(+\infty)$. As a byproduct, from the smallness of initial data, since the following estimate:
\begin{align}\label{N1}
	\|\psi\|_{X_s(T)}\lesssim\|(\psi_0,\psi_1,\psi_2)\|_{\ml{A}_s}
\end{align}
holds for any $T>0$, we immediately conclude the validity of sharp upper bound estimate \eqref{Upper-Nonlinear} for any $j\in\{0,1,2\}$.

\subsubsection{Large time profile of Sobolev solution}
\hspace{5mm}Given the global in-time solution $\psi\in X_s(+\infty)$ to the nonlinear Cauchy problem \eqref{Eq-Dissipation-JMGT}, via the re-expression of profile \eqref{An-Rep} and the mild solution's formula \eqref{Rep-non}, we establish 
\begin{align*}
	&\partial_t^j\psi(t,x)-(-\Delta)^j\widetilde{\psi}(t,x)\\
	&=\big(\partial_t^j\psi^{\lin}(t,x)-(-\Delta)^jG(t,x)P_{\Psi_0}\big)-\big(\partial_t^jK_2(t,|D|)\ml{N}(\psi_1,\nabla\psi_0)(x)-\tau(-\Delta)^jG(t,x)P_{\ml{N}(\psi_1,\nabla\psi_0)}\big)\\
	&\quad+\int_0^t\partial_t^{j+1}K_2(t-\eta,x)\ast_{(x)}\ml{N}(\psi_t,\nabla\psi)(\eta,x)\,\mathrm{d}\eta\\
	&=:I_1(t,x)-I_2(t,x)+I_3(t,x)
\end{align*}
for all $j\in\{0,1\}$. Let $\sigma\in\{j-2,s\}$ here. From the error estimate for the linear part in Theorem \ref{Theorem-Linear-Estimates}, we know that
\begin{align}\label{C5}
	\|I_1(t,\cdot)\|_{\dot{H}^{\sigma+2-j}}&=o\big(t^{-\frac{n+2\sigma+4+2j}{4}}\big)
\end{align}
as large time. Taking $\varphi_0=\varphi_1=0$ as well as $\varphi_2=\ml{N}(\psi_1,\nabla\psi_0)$ in \eqref{p3.2.1} and \eqref{p3.2.3}, one arrives at
\begin{align}\label{B2}
	\|I_2(t,\cdot)\|_{\dot{H}^{\sigma+2-j}}&\leqslant \big\|\big(\partial_t^jK_2(t,|D|)-\tau|D|^{2j}\ml{G}(t,|D|)\big)\ml{N}(\psi_1,\nabla\psi_0)(\cdot)\big\|_{\dot{H}^{\sigma+2-j}}\notag\\
	&\quad+\big\|\tau|D|^{2j}\ml{G}(t,|D|)\ml{N}(\psi_1,\nabla\psi_0)(\cdot)-\tau|D|^{2j}G(t,\cdot)P_{\ml{N}(\psi_1,\nabla\psi_0)}\big\|_{\dot{H}^{\sigma+2-j}}\notag\\
	&=o\big(t^{-\frac{n+2\sigma+4+2j}{4}}\big)
\end{align}
for large time $t\gg1$. Moreover, in the chain of inequalities that led to  \eqref{Est-07} and \eqref{Est-08}, we already obtained
\begin{align*}
	\|I_3(t,\cdot)\|_{\dot{H}^{\sigma+2-j}}&\lesssim\big((1+t)^{-1}+(1+t)^{-\frac{n+2-2j}{2}}+(1+t)^{-\frac{n+1-j}{2}}\big)(1+t)^{-\frac{n+2\sigma+4+2j}{4}}\|\psi\|_{X_s(T)}^2\\
	&\lesssim(1+t)^{-\frac{n+2\sigma+4+2j}{4}-\min\{1,\frac{n}{2}\}}\|(\psi_0,\psi_1,\psi_2)\|_{\ml{A}_s}^2,
\end{align*}
where we used  \eqref{N1}. Thus,
\begin{align*}
\|\partial_t^j\psi(t,\cdot)-(-\Delta)^j\widetilde{\psi}(t,\cdot)\|_{\dot{H}^{\sigma+2-j}}\leqslant\sum\limits_{l\in\{1,2,3\}}\|I_l(t,\cdot)\|_{\dot{H}^{\sigma+2-j}}=o\big(t^{-\frac{n+2\sigma+4+2j}{4}}\big),
\end{align*}
that is to say, we proved \eqref{Asymptotic-Nonlinear} for $j\in\{0,1\}$. Applying  this estimate together with the Minkowski inequality and Lemma \ref{Lemma-3.1}, we find
\begin{align}
	\|\partial_t^j\psi(t,\cdot)\|_{\dot{H}^{\sigma+2-j}}&\gtrsim\|G(t,\cdot)\|_{\dot{H}^{\sigma+2+j}}|M_{\tau,\gamma}|- \|\partial_t^j\psi(t,\cdot)-(-\Delta)^j\widetilde{\psi}(t,\cdot)\|_{\dot{H}^{\sigma+2-j}}\notag\\
	&\gtrsim t^{-\frac{n+2\sigma+4+2j}{4}}|M_{\tau,\gamma}|\label{C4}
\end{align}
for large time $t\gg1$ if $M_{\tau,\gamma}\neq0$, which is exactly \eqref{Lower-Nonlinear} for $j\in\{0,1\}$.

For the second-order time-derivative $j=2$, we can employ the same method that we just used for the lower-order terms for $j\in\{0,1\}$. Indeed, \eqref{C5} and \eqref{B2} are satisfied for $j=2$ as well. The derived estimate \eqref{C6} implies
\begin{align*}
&\left\| \partial_t^2 K_2(\tfrac{t}{2},\cdot)\ast_{(x)}\ml{N}(\psi_t,\nabla\psi)(\tfrac{t}{2},\cdot)\right\|_{\dot{H}^{\sigma}}+\left\|\int_0^{\frac{t}{2}}\partial_t^{3}K_2(t-\eta,\cdot)\ast_{(x)}\ml{N}(\psi_t,\nabla\psi)(\eta,\cdot)\,\mathrm{d}\eta\right\|_{\dot{H}^{\sigma}}\\
&\quad+\left\|\int_{\frac{t}{2}}^t\partial_t^2K_2(t-\eta,\cdot)\ast_{(x)}\partial_t\ml{N}(\psi_t,\nabla\psi)(\eta,\cdot)\,\mathrm{d}\eta \right\|_{\dot{H}^{\sigma}}\\
&\lesssim (1+t)^{-\frac{n+2\sigma+8}{4}-\min\{1,\frac{n}{2}\}}\|(\psi_0,\psi_1,\psi_2)\|_{\ml{A}_s}^2.
\end{align*}
According to the error terms constructed from \eqref{Rep-non-2} as follows:
\begin{align*}
	&\partial_t^2\psi(t,x)-(-\Delta)^2\widetilde{\psi}(t,x)\\
	&=\big(\partial_t^2\psi^{\lin}(t,x)-(-\Delta)^2G(t,x)P_{\Psi_0}\big)-\big(\partial_t^2K_2(t,|D|)\ml{N}(\psi_1,\nabla\psi_0)(x)-\tau(-\Delta)^2G(t,x)P_{\ml{N}(\psi_1,\nabla\psi_0)}\big)\\
	&\quad+ \partial_t^2 K_2(\tfrac{t}{2},x)\ast_{(x)}\ml{N}(\psi_t,\nabla\psi)(\tfrac{t}{2},x)+\int_0^{\frac{t}{2}}\partial_t^{3}K_2(t-\eta,x)\ast_{(x)}\ml{N}(\psi_t,\nabla\psi)(\eta,x)\,\mathrm{d}\eta\\
	&\quad+\int_{\frac{t}{2}}^t\partial_t^2K_2(t-\eta,x)\ast_{(x)}\partial_t\ml{N}(\psi_t,\nabla\psi)(\eta,x)\,\mathrm{d}\eta,
\end{align*}
we deduce \eqref{Asymptotic-Nonlinear} for $j=2$. Then, by the same way as \eqref{C4}, the lower bound estimate \eqref{Lower-Nonlinear} for $j=2$ is completed.

\section*{Acknowledgments}
Wenhui Chen is supported in part by the National Natural Science Foundation of China (grant No. 12301270), Guangdong Basic and Applied Basic Research Foundation (grant No. 2025A1515010240). The authors thank Ryo Ikehata (Hiroshima University) for some suggestions in the preparation of this manuscript.


\begin{thebibliography}{99}
\bibitem{Abramov-1999}
\newblock O.V. Abramov.
\newblock \emph{High-Intensity Ultrasonics: Theory and Industrial Applications}.
\newblock CRC Press, 2019.
\bibitem{B-L-2020}
\newblock M. Bongarti, S. Charoenphon, I. Lasiecka.
\newblock Vanishing relaxation time dynamics of the Jordan Moore-Gibson-Thompson equation arising in nonlinear acoustics.
\newblock \emph{J. Evol. Equ.} \textbf{21} (2021), no. 3, 3553--3584.
\bibitem{Chen=2025}
\newblock W. Chen.
\newblock Global in-time existence of solutions for the complex-valued Jordan-Moore-Gibson-Thompson equations of Westervelt-type under different conditions on initial data.
\newblock \emph{Preprint} (2025). arXiv:2507.08273
\bibitem{Chen-Gong=2024}
\newblock W. Chen, J. Gong.
\newblock Some asymptotic profiles for the viscous Moore-Gibson-Thompson equation in the $L^q$ framework.
\newblock \emph{J. Math. Anal. Appl.} \textbf{540} (2024), no. 2, Paper No. 128641, 26 pp.
\bibitem{Chen-Ikehata=2021}
\newblock W. Chen, R. Ikehata.
\newblock The Cauchy problem for the Moore-Gibson-Thompson equation in the dissipative case.
\newblock \emph{J. Differential Equations} \textbf{292} (2021), 176--219.
\bibitem{Chen-Ikehata=2026}
\newblock W. Chen, R. Ikehata.
\newblock Large time behavior for the classical wave equation with different regular data and its applications.
\newblock \emph{Asymptot. Anal.} (2026), accepted.
\bibitem{Chen-Liu-Palmieri-Qin=2023}
\newblock W. Chen, Y. Liu, A. Palmieri, X. Qin.
\newblock The influence of viscous dissipations on the nonlinear acoustic wave equation with second sound.
\newblock \emph{Preprint} (2023). arXiv:2211.00944
\bibitem{Chen-Ma-Qin=2025}
\newblock W. Chen, M. Ma, X. Qin.
\newblock $L^p-L^q$ estimates for the dissipative and conservative Moore-Gibson-Thompson equations.
\newblock \emph{J. Math. Phys.} \textbf{66} (2025), no. 7, Paper No. 071515, 17 pp.
\bibitem{Chen-Palmieri=2020}
\newblock W. Chen, A. Palmieri. 
\newblock Nonexistence of global solutions for the semilinear Moore-Gibson-Thompson equation in the conservative case.
\newblock \emph{Discrete Contin. Dyn. Syst.} \textbf{40} (2020), no. 9, 5513--5540.
\bibitem{Chen-Palmieri=2021}
\newblock W. Chen, A. Palmieri.
\newblock A blow-up result for the semilinear Moore-Gibson-Thompson equation with nonlinearity of derivative type in the conservative case.
\newblock \emph{Evol. Equ. Control Theory} \textbf{10} (2021), no. 4, 673--687.
\bibitem{Chen-Takeda=2023}
\newblock W. Chen, H. Takeda. 
\newblock Asymptotic behaviors for the Jordan-Moore-Gibson-Thompson equation in the viscous case.
\newblock \emph{Nonlinear Anal.} \textbf{234} (2023), Paper No. 113316, 36 pp.
\bibitem{Chill-Haraux=2003}
\newblock R. Chill, A. Haraux.
\newblock An optimal estimate for the difference of solutions of two abstract evolution equations.
\newblock \emph{J. Differential Equations} \textbf{193} (2003), no. 2, 385--395.
\bibitem{Conejero-Lizama-Rodenas-2015}
\newblock J.A. Conejero, C. Lizama, F. Rodenas.
\newblock Chaotic behaviour of the solutions of the Moore-Gibson-Thompson equation.
\newblock \emph{Appl. Math. Inf. Sci.} \textbf{9} (2015), no. 5, 2233--2238.
\bibitem{Dao-Van-Nguyen=2024}
\newblock T.A. Dao, D. Van Duong, D.A. Nguyen.
\newblock On asymptotic properties of solutions to $\sigma$-evolution equations with general double damping.
\newblock \emph{J. Math. Anal. Appl.} \textbf{536} (2024), no. 2, Paper No. 128246, 35 pp.
\bibitem{Dell-Pata=2017}
\newblock F. Dell'Oro, V. Pata.
\newblock On the Moore-Gibson-Thompson equation and its relation to linear viscoelasticity.
\newblock \emph{Appl. Math. Optim.} \textbf{76} (2017), no. 3, 641--655.
\bibitem{Dreyer-Krauss-Bauer-Ried-2000}
\newblock T. Dreyer, W. Krauss, E. Bauer, R.E. Riedlinger.
\newblock Investigations of compact self focusing transducers using stacked piezoelectric elements for strong sound pulses in therapy.
\newblock \emph{IEEE Ultrasonics Sympos.}  \textbf{2} (2000), 1239--1242.
\bibitem{Duan=2011}
\newblock R. Duan.
\newblock Global smooth flows for the compressible Euler-Maxwell system. The relaxation case.
\newblock \emph{J. Hyperbolic Differ. Equ.} \textbf{8} (2011), no. 2, 375--413.
\bibitem{Grafakos=2009}
\newblock L. Grafakos.
\newblock \emph{Modern Fourier Analysis}.
\newblock Grad. Texts in Math., 250. Springer, New York, 2009.
\bibitem{Grafakos-Oh-2014} 
\newblock L. Grafakos, S. Oh.
\newblock The Kato-Ponce inequality.
\newblock \emph{Comm. Partial Differential Equations} \textbf{39} (2014), no. 6, 1128--1157.
\bibitem{Hajaiej-Molinet-Ozawa-Wang-2011}
\newblock H. Hajaiej, L. Molinet, T. Ozawa, B. Wang.
\newblock Necessary and sufficient conditions for the fractional Gagliardo-Nirenberg inequalities and applications to Navier-Stokes and generalized boson equations.
\newblock Harmonic analysis and nonlinear partial differential equations, 159--175,
\emph{RIMS K\^oky\^uroku Bessatsu, B26}, Research Institute for Mathematical Sciences (RIMS), Kyoto, 2011.
\bibitem{Hosono-Kawashima=2006}
\newblock T. Hosono, S. Kawashima.
\newblock Decay property of regularity-loss type and application to some nonlinear hyperbolic-elliptic system.
\newblock \emph{Math. Models Methods Appl. Sci.} \textbf{16} (2006), no. 11, 1839--1859.
\bibitem{Ide-Haramoto-Kawashima=2008}
\newblock K. Ide, K. Haramoto, S. Kawashima.
\newblock Decay property of regularity-loss type for dissipative Timoshenko system.
\newblock \emph{Math. Models Methods Appl. Sci.} \textbf{18} (2008), no. 5, 647--667.
\bibitem{Ikehata-Michihisa=2019}
\newblock R. Ikehata, H. Michihisa.
\newblock Moment conditions and lower bounds in expanding solutions of wave equations with double damping terms.
\newblock \emph{Asymptot. Anal.} \textbf{114} (2019), no. 1-2, 19--36.
\bibitem{Ikehata-Sawada=2016}
\newblock R. Ikehata, A. Sawada.
\newblock Asymptotic profile of solutions for wave equations with frictional and viscoelastic damping terms.
\newblock \emph{Asymptot. Anal.} \textbf{98} (2016), no. 1-2, 59--77.
\bibitem{Ikehata-Takeda=2017}
\newblock R. Ikehata, H. Takeda.
\newblock Critical exponent for nonlinear wave equations with frictional and viscoelastic damping terms.
\newblock \emph{Nonlinear Anal.} \textbf{148} (2017), 228--253.
\bibitem{Jordan-2014}
\newblock P.M. Jordan.
\newblock Second-sound phenomena in inviscid, thermally relaxing gases.
\newblock \emph{Discrete Contin. Dyn. Syst. Ser. B} \textbf{19} (2014), no. 7, 2189--2205.
\bibitem{Kaltenbacher-2025}
\newblock B. Kaltenbacher.
\newblock Acoustic nonlinearity parameter tomography with the Jordan-Moore-Gibson-Thompson equation in frequency domain.
\newblock \emph{Inverse Problems} \textbf{41} (2025), no. 9, Paper No. 095010, 27 pp.
\bibitem{Kaltenbacher-Lasiecka-Marchand-2011}
\newblock B. Kaltenbacher, I. Lasiecka, R. Marchand.
\newblock Wellposedness and exponential decay rates for the Moore-Gibson-Thompson equation arising in high intensity ultrasound.
\newblock \emph{Control Cybernet.} \textbf{40} (2011), no. 4, 971--988.
\bibitem{Kaltenbacher-Lasiecka-Pos-2012}
\newblock B. Kaltenbacher, I. Lasiecka, M.K. Pospieszalska. 
\newblock Well-posedness and exponential decay of the energy in the nonlinear Jordan-Moore-Gibson-Thompson equation arising in high intensity ultrasound.
\newblock \emph{Math. Models Methods Appl. Sci.} \textbf{22} (2012), no. 11, 1250035, 34 pp.
\bibitem{Kaltenbacher-Nikolic-2019}
\newblock B. Kaltenbacher, V. Nikoli\'c.
\newblock The Jordan-Moore-Gibson-Thompson equation: well-posedness with quadratic gradient nonlinearity and singular limit for vanishing relaxation time.
\newblock \emph{Math. Models Methods Appl. Sci.} \textbf{29} (2019), no. 13, 2523--2556.
\bibitem{Kaltenbacher-Niko-2021}
\newblock B. Kaltenbacher, V. Nikoli\'c.
\newblock The inviscid limit of third-order linear and nonlinear acoustic equations.
\newblock \emph{SIAM J. Appl. Math.} \textbf{81} (2021), no. 4, 1461--1482.
\bibitem{Kaltenbacher-Landes-Hoffelner-Simkovics-2002}
\newblock M. Kaltenbacher, H. Landes, J. Hoffelner, R. Simkovics.
\newblock Use of modern simulation for industrial applications of high power ultrasonics.
\newblock \emph{IEEE Ultrasonics Sympos.} \textbf{1} (2002), 673--678. 
\bibitem{Lighthill=1956}
\newblock M.J. Lighthill.
\newblock Viscosity effects in sound waves of finite amplitude. 
\newblock \emph{Surveys in mechanics} (1956), 250351.
\bibitem{Marchand-McDevitt-Triggiani-2012}
\newblock R. Marchand, T. McDevitt, R. Triggiani.
\newblock An abstract semigroup approach to the third-order Moore-Gibson-Thompson partial differential equation arising in high-intensity ultrasound: structural decomposition, spectral analysis, exponential stability.
\newblock \emph{Math. Methods Appl. Sci.} \textbf{35} (2012), no. 15, 1896--1929.
\bibitem{MooreGibson1960} 
\newblock F.K. Moore, W.E. Gibson.
\newblock Propagation of weak disturbances in a gas subject to relaxation effect.
\newblock \emph{J. Aerospace Sci.} \textbf{27} (1960), no. 2, 117--127.
\bibitem{Niko-Winker=2024}
\newblock V. Nikoli\'c, M. Winkler.
\newblock $L^{\infty}$ blow-up in the Jordan-Moore-Gibson-Thompson equation.
\newblock \emph{Nonlinear Anal.} \textbf{247} (2024), Paper No. 113600, 23 pp.
\bibitem{Pellicer-Said-Houari=2019}
\newblock M. Pellicer, B. Said-Houari. 
\newblock Wellposedness and decay rates for the Cauchy problem of the Moore-Gibson-Thompson equation arising in high intensity ultrasound.
\newblock \emph{Appl. Math. Optim.} \textbf{80} (2019), no. 2, 447--478.
\bibitem{Racke-Said-2020}
\newblock R. Racke, B. Said-Houari.
\newblock Global well-posedness of the Cauchy problem for the 3D Jordan-Moore-Gibson-Thompson equation.
\newblock \emph{Commun. Contemp. Math.} \textbf{23} (2021), no. 7, Paper No. 2050069, 39 pp.
\bibitem{Said-Houari=Besov=2022}
\newblock B. Said-Houari.
\newblock Global existence for the Jordan-Moore-Gibson-Thompson equation in Besov spaces.
\newblock \emph{J. Evol. Equ.} \textbf{22} (2022), no. 2, Paper No. 32, 40 pp.
\bibitem{Said-Houari=Large-Norm=2022}
\newblock B. Said-Houari.
\newblock Global well-posedness of the Cauchy problem for the Jordan-Moore-Gibson-Thompson equation with arbitrarily large higher-order Sobolev norms.
\newblock \emph{Discrete Contin. Dyn. Syst.} \textbf{42} (2022), no. 9, 4615--4635.
\bibitem{Takeda=2015}
\newblock H. Takeda.
\newblock Higher-order expansion of solutions for a damped wave equation.
\newblock \emph{Asymptot. Anal.} \textbf{94} (2015), no. 1-2, 1--31.
\bibitem{Takeda=2026}
\newblock H. Takeda.
\newblock $L^2$-Estimates for the linear elastic waves.
\newblock \emph{Math. Ann.} \textbf{394} (2026), no. 4, 82.
\bibitem{Thompson1972} 
\newblock P.A. Thompson.
\newblock \emph{Compressible-Fluid Dynamics}.
\newblock McGraw-Hill, New York, 1972.
\end{thebibliography}
\end{document}